\newcommand{\altomega}{\sigma}
\newcommand{\mtext}{}
\newcommand{\mmtext}{}
\newcommand{\matti}{}
\newcommand{\removedEinstein}[1]{}
\newcommand{\extension}[1]{} 
\newcommand{\generalizations}[1]{}
\newcommand{\HOX}[1]{}
\newcommand{\hiddenfootnote}[1]{}
\def\cN{{\mathcal N}}
\newcommand{\R}{{\mathbb R}} 
\newcommand{\D}{{\cal D}}
\renewcommand{\D}{{\mathcal D}}  
\renewcommand{\L}{{\mathcal L}}
\def\hat{\widehat}
\def\tilde{\widetilde}
\def \bfo {\begin {eqnarray*} }
\def \efo {\end {eqnarray*} }
\def \ba {\begin {eqnarray*} }
\def \ea {\end {eqnarray*} }
\def \beq {\begin {eqnarray}}
\def \eeq {\end {eqnarray}}
\def \supp {\hbox{supp}\,}
\def \dim{\hbox{dim}\,}
\def \diam {\hbox{diam }}
\def \dist {\hbox{dist}}
\def \det {\hbox{det}}
\def \e {\varepsilon}
\def \p {\partial}
\def \a {\alpha}
\renewcommand{\b}{\beta}
\def\Z{{\mathbb Z}}
\newtheorem{definition}{Definition}[section]
\newtheorem{proposition}[definition]{Proposition} 
\newtheorem{remark}[definition]{Remark}
\theoremstyle{plain}
\newtheorem{Df}[definition]{Definition}
\newtheorem{Le}[definition]{Lemma}
\newtheorem{Th}[definition]{Theorem}
\newtheorem{Po}[definition]{Proposition}
\theoremstyle{plain}
\begin{document}
\newcommand{\QED}{\begin{flushright}
$\square $
\end{flushright}}

\title[Distance difference functions]
{Determination of a Riemannian manifold from the distance difference functions}
\author{Matti Lassas and Teemu Saksala}
\address{Matti Lassas and Teemu Saksala, Department of Mathematics and Statistics, University of Helsinki, Finland}
\date{\today}
\maketitle

\begin{abstract}
Let $(N,g)$ be a Riemannian manifold with the distance function $d(x,y)$ and an open subset $M\subset N$. For $x\in M$ we denote by $D_x$ the distance difference function $D_x:F\times F\to \mathbb R$, given by $D_x(z_1,z_2)=d(x,z_1)-d(x,z_2)$, $z_1,z_2\in F=N\setminus M$. We consider the inverse problem of determining the topological and the differentiable structure of the manifold $M$ and the metric $g|_M$ on it when we are given the distance difference data, that is, the set $F$, the metric $g|_F$, and the collection $\mathcal D(M)=\{D_x;\ x\in M\}$. Moreover, we consider the embedded image $\mathcal D(M)$ of the manifold $M$, in the vector space $C(F\times F)$, as a representation of manifold $M$. The inverse problem of determining $(M,g)$ from $\mathcal D(M)$ arises e.g. in the study of the wave equation on $\mathbb R\times N$ when we observe in $F$ the waves produced by spontaneous point sources at unknown points $(t,x)\in \mathbb R\times M$. Then $D_x(z_1,z_2)$ is the difference of the times when one observes at points $z_1$ and $z_2$ the wave produced by a point source at $x$ that goes off at an unknown time. The problem has applications in hybrid inverse problems and in geophysical imaging.
\end{abstract}

\noindent{\bf Keywords:} Inverse problems, distance functions, embeddings of manifolds, wave equation.

\tableofcontents
\section{Introduction}

\subsection{Motivation of the problem}
Let us consider a body in which there  \matti{spontaneously appear} point sources that create propagating waves. 
{In various applications one encounters a geometric 
 inverse problem} where we detect such waves either outside or at the boundary of the body and aim to determine the unknown wave speed inside the body. 
\matti{As an example of such situation, one can consider the micro-earthquakes that appear very frequently 
near  active faults. The related inverse problem is whether the surface observations
of elastic waves produced by the micro-earthquakes can be used in the geophysical imaging
of Earth's subsurface \cite{Kayal,Sava}, that is, to  determine the speed of the elastic waves in the studied volume.} 
In this paper we consider a highly idealized version of the above inverse problem:
We consider the problem on an $n$  dimensional manifold $N$ with a Riemannian metric $g$. The distance function determined by this metric tensor
corresponds physically to the travel time of a wave between two points. The Riemannian distance
of points $x,y\in N$ is denoted by $d(x,y)$. For simplicity
we assume that the manifold $N$  is compact and has no boundary.
Instead of considering measurements on boundary, we assume that the manifold
contains an unknown part $M\subset N$ and the metric is known outside {the set $M$}.
When \matti{a spontaneous point source} produces a wave at some unknown point $x\in M$
at some unknown time $t\in \R$, the produced wave is observed at the point
$z\in N\setminus M$  at time $T_{x,t}(z)=d(z,x)+t$. These observation times
at two points $z_1,z_2\in N\setminus M$ determine
the {\it distance difference function}
\beq\label{def: dist diff}
D_x(z_1,z_2)=T_{x,t}(z_1)-T_{x,t}(z_2)=d(z_1,x)-d(z_2,x).
\eeq
Physically, this function corresponds to the difference of times
at $z_1$ and $z_2$ of the waves produced by the point source at $(x,t)$, {see Fig 1.\ and
Section \ref{subset: application}.}
The assumption that there is a large number point sources and that we do measurements over
a long time can be modeled by the assumption that we are given \matti{the set $N\setminus M$ and}
the family of functions
\ba
\{D_x\ ;\ x\in X\}\subset  C((N\setminus M)\times (N\setminus M)),
\ea
where $X\subset M$ is either the whole manifold $M$ or its dense subset, 
\begin{figure}
 \begin{picture}(100,150)
  \put(-100,10){\includegraphics[width=300pt]{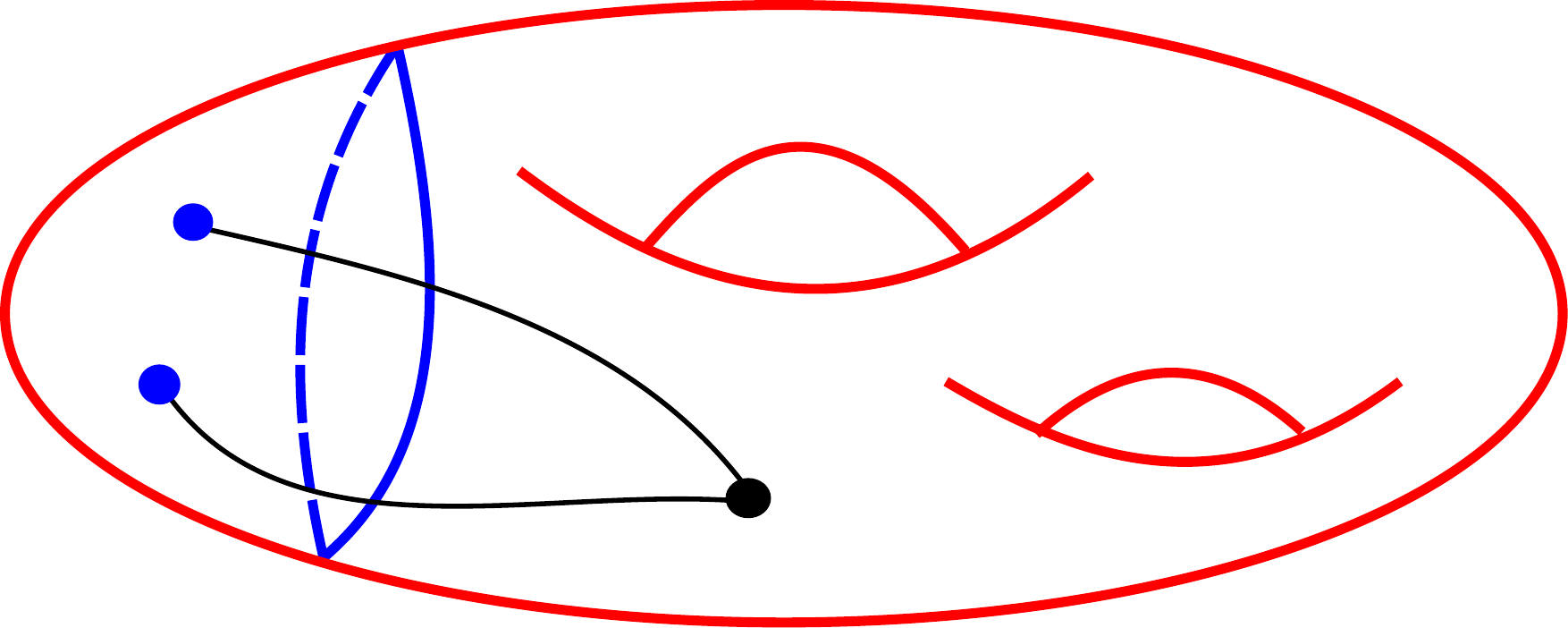}}
  \put(50,30){$x$}
  \put(-63,55){$z_1$}
  \put(-57,90){$z_2$}
  \put(-85,70){$F$}
  \put(130,90){$M$}
  
   \end{picture}
\caption{\it The distance difference functions are related to observation on the
closed manifold $N$ that contains an unknown open subset $M$ and its known
complement $F=N\setminus M$. The distance difference function $D_x$
associated to a source point $x\in M$ has,  at the observation points $z_1,z_2\in F$,
the value $D_x(z_1,z_2)=d(x,z_1)-d(x,z_2)$. Consider the wave equation and 
a wave that is  produced by a point source at $x$ that goes off at an unknown time and that 
is observed on $F$. Then
the difference of the times when the wave is
observed at the points $z_1$ and $z_2$ is equal to  $D_x(z_1,z_2)$. The time difference
inverse problem is to determine the topology and the isometry type of $(N,g)$ from
such observations {when $x$  runs over a dense subset of $M$}.}
\end{figure}
\matti{see Remark \ref{rem: density} below.}

\subsection{Definitions and the main result}
\label{The problem setting of this paper} 

Let $(N_1,g_1)$ and $(N_2,g_2)$ be compact and connected Riemannian manifolds without boundary. 
Let $d_j(x,y)$ denote the Riemannian distance of points $x,y\in N_j$, $j=1,2$. 
Let $M_j\subset N_j$ 
be open sets and define closed sets $F_j=N_j \setminus M_j$. Suppose $F^{int}_j\neq \emptyset$. This is a crucial assumption and we provide a counterexample for a case $F^{int}_j= \emptyset$ in Appendix A.

\matti{Below, we assume that we know $F_j$ as a differentiable manifold, that is,} 
we know the atlas of $C^\infty$-smooth coordinates on $F_j$, and the metric tensor $g_j|_{F_j}$ on $F_j$,
but we do not know the manifold $(M_j,g_j|_{M_j})$. We assume $F_j$ to be a smooth manifold with smooth boundary $\partial F_j=\partial M_j$.

\begin{definition}\label{def: dd data}
\matti{For $j=1,2$ and all points $x\in N_j$ we define the distance difference function
$$
D^j_{x}: F_j \times F_j \rightarrow \R, \: D^j_{x}(z_1,z_2):=d_j(z_1,x)-d_j(z_2,x)
$$
where $F_j=N_j\setminus M_j$.
Recall that here $d_j$ is the Riemannian distance function of manifold $N_j$. 
\matti{We denote by
\ba
\D_j:N_j\to C(F_j \times F_j),\quad \D_j(x)=D_x^j
\ea
the map from a point $x$ to the corresponding distance difference function $D_x^j$}.
The pair
 $(F_j,g_j|_{F_j})$ and the collection 
\ba
\D_j(M_j)=\{D^j_x\,;\ x\in M_j\}\subset C(F_j\times F_j)
\ea of the distance difference functions of the points $x\in M_j$
is called the distance difference data for the set $M_j$.}
\end{definition}

 We emphasize that the above collections
$\{D^j_x(\cdot,\cdot);\ x\in M_j\}$ are given as unindexed subsets of $C(F_j\times F_j)$,
that is, for a given element  $D^j_x(\cdot,\cdot)$ of this set we do not know what
is the corresponding ``index point''  $x$.

 To prove
the uniqueness of this inverse problem, we assume the following:
\beq
& &
\hbox{There is a diffeomorphism  $\phi:F_1 \rightarrow F_2$
such that $\phi^\ast g_2|_{F_2}=g_1|_{F_1}$},  \label{Data1a}
\\
& &
\{D^1_x(\cdot,\cdot)\ ; \ x\in M_1\}=\{D^2_y(\phi(\cdot),\phi(\cdot))\ ;\ y \in M_2\}.
\label{Data1b}
\eeq


\matti{The following proposition states that using the small data $\D_j(M_j)$ we can
 construct the  bigger data set $\D_j(N_j)$.}
\begin{Po}\label{bigger data}
Assume that \eqref{Data1a}-\eqref{Data1b} are valid. Then:
\begin{enumerate}
\item  [(i)] The map $\phi:F_1 \rightarrow F_2,$ is a metric isometry, that is, $d_1(z,w)=d_2(\phi(z),\phi(w))$ for all $z,w\in F_1$.
\item  [(ii)] The collections $\D_j(N_j)=
\{D^j_x(\cdot,\cdot);\ x\in N_j\}\subset C(F_j\times F_j)$ are equivalent in the following sense 
\beq\label{Data}
\{D^1_x(\cdot,\cdot)\ ;\ x\in N_1\}=\{D^2_y(\phi(\cdot),\phi(\cdot))\ ;\ y \in N_2\}.
\eeq
\end{enumerate}
\end{Po}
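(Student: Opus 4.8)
The plan is to prove part (i) and deduce part (ii) from it for free. Indeed, assume $d_1(z,w)=d_2(\phi z,\phi w)$ for all $z,w\in F_1$. Then for every $x\in F_1$ and all $z_1,z_2\in F_1$,
\[
D^1_x(z_1,z_2)=d_1(z_1,x)-d_1(z_2,x)=d_2(\phi z_1,\phi x)-d_2(\phi z_2,\phi x)=D^2_{\phi x}(\phi z_1,\phi z_2),
\]
and $\phi x\in F_2$; running this backwards shows $\{D^1_x(\cdot,\cdot):x\in F_1\}=\{D^2_y(\phi(\cdot),\phi(\cdot)):y\in F_2\}$, so taking the union with \eqref{Data1b} over the partition $N_j=M_j\cup F_j$ yields \eqref{Data}.

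For (i) I would use a decomposition of $d_1$ through the boundary. Write $d_{F_1}$ for the intrinsic distance of the known manifold $(F_1,g_1|_{F_1})$. The first step is to show that for all $z,w\in F_1$
\[
d_1(z,w)=\min\Bigl(d_{F_1}(z,w),\ \inf_{a,b\in\partial M_1}\bigl(d_{F_1}(z,a)+d_1(a,b)+d_{F_1}(b,w)\bigr)\Bigr).
\]
Here ``$\le$'' is the triangle inequality combined with $d_1\le d_{F_1}$; for ``$\ge$'' one takes a minimizing geodesic of $(N_1,g_1)$ from $z$ to $w$, and if it stays in $\overline{F_1}$ its length is both $d_1(z,w)$ and at least $d_{F_1}(z,w)$, while if it enters $M_1$ one lets $a$ and $b$ be the first and last points where it meets $\partial M_1$ and notes that each subarc lying in $\overline{F_1}$ realizes $d_1$ and hence $d_{F_1}$. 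As $d_{F_1}$ belongs to the data and is preserved by $\phi$ (a Riemannian isometry by \eqref{Data1a}), and as the same formula holds on $N_2$ with $\partial M_2=\phi(\partial M_1)$, part (i) reduces to showing $d_1(a,b)=d_2(\phi a,\phi b)$ for all $a,b\in\partial M_1=\partial F_1$.

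To obtain this I would first push the data onto the boundary. Since $x\mapsto D^1_x$ is $2$-Lipschitz in $d_1$, it is continuous on $\overline{M_1}$, so for $a\in\partial M_1$ the function $D^1_a$ lies in the closure of $\D_1(M_1)$; choosing $x_k\in M_1$ with $x_k\to a$ and a subsequence along which the corresponding points $\psi(x_k)\in M_2$ (where $\psi\colon M_1\to M_2$ is the bijection given by \eqref{Data1b}) converge to some $y^\ast\in\overline{M_2}$, we get $D^1_a=D^2_{y^\ast}(\phi(\cdot),\phi(\cdot))$. Granting the injectivity of $\D_j$ on $\overline{M_j}$ (a separate lemma, using $F^{int}_j\neq\emptyset$: a point of $N_j$ is recovered from the restriction of its distance function to the open set $F^{int}_j$, where that restriction solves the eikonal equation), $D^1_a$ is not of the form $D^1_x$ with $x\in M_1$, hence $y^\ast\notin M_2$, i.e.\ $y^\ast\in\partial M_2\subset F_2$. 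The quantity $k:=d_1(z,a)-d_2(\phi z,y^\ast)$ is independent of $z\in F_1$; evaluating at $z=a$ gives $k=-d_2(\phi a,y^\ast)$, while minimizing over $z\in F_1$ gives $-k=\min_{F_2}d_2(\cdot,y^\ast)=d_2(y^\ast,F_2)=0$ (as $y^\ast\in F_2$). Hence $d_2(\phi a,y^\ast)=0$, so $y^\ast=\phi a$ and $D^1_a=D^2_{\phi a}(\phi(\cdot),\phi(\cdot))$. Finally, fix small $\epsilon>0$ and let $\nu_a$ be the unit normal of $\partial M_1$ pointing into $F_1$; the point $z_a:=\exp_a(\epsilon\nu_a)$, computable from $(F_1,g_1|_{F_1})$, lies in $F_1$, satisfies $d_1(z_a,a)=\epsilon$, and has $\phi z_a=\exp_{\phi a}(\epsilon\nu_{\phi a})$ with $d_2(\phi z_a,\phi a)=\epsilon$. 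Therefore $\epsilon-d_1(a,b)=D^1_a(z_a,b)=D^2_{\phi a}(\phi z_a,\phi b)=\epsilon-d_2(\phi a,\phi b)$, giving $d_1(a,b)=d_2(\phi a,\phi b)$; substituting this into the displayed formula proves (i).

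The step I expect to be the main obstacle is the identification $D^1_a=D^2_{\phi a}(\phi(\cdot),\phi(\cdot))$, i.e.\ proving that the boundary extension of the data correspondence takes $\partial M_1$ to $\partial M_2$ via $\phi$ itself rather than via some point of $\overline{M_2}$ sitting on a normal geodesic below $\phi a$. This is precisely where the \emph{exact} equality of the data sets in \eqref{Data1b} must be used (a thin spike glued inside $M$ changes the data arbitrarily little but alters these boundary distances), and where the hypothesis $F^{int}_j\neq\emptyset$ — which fails in the Appendix A counterexample — is essential, entering through the injectivity of $\D_j$ and thereby pinning down the otherwise undetermined additive constants $d_1(\cdot,x)-d_2(\phi(\cdot),\psi(x))$.
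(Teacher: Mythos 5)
Your reduction of (ii) to (i), and your decomposition of $d_1(z,w)$ by splitting a minimizing $N_1$-geodesic at its first and last contact with $\partial M_1$, are sound and parallel the paper's argument (the paper works with the segments of the geodesic lying in $F_1$ directly rather than through the intrinsic distance $d_{F_1}$, but this is the same idea). The divergence, and the problem, is in how you match the boundary-to-boundary distances $d_1(a,b)=d_2(\phi a,\phi b)$. Your argument needs $y^\ast\notin M_2$, which you deduce from ``injectivity of $\D_j$ on $\overline{M_j}$'', deferred to a separate lemma justified by the remark that a point is recovered from its distance function on $F^{int}_j$ via the eikonal equation. That justification does not work as stated: the data $D^1_a$ determine $d_1(a,\cdot)|_{F_1}$ only up to an additive constant, so the eikonal/gradient argument gives, at a point of differentiability, the \emph{direction} of the minimizing geodesic to $a$ but not the distance along it; ruling out that a boundary point $a\in\partial M_1$ and an interior point $x\in M_1$ (sitting, say, behind $a$ on a normal geodesic) produce the same \emph{difference} function is precisely the nontrivial content of injectivity. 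In the paper this is Step 2 of Theorem \ref{topolog. recons lemma}, which requires $n\geq 2$, a two-basepoint argument and a broken-geodesic contradiction, and occupies about a page; it is logically independent of Proposition \ref{bigger data}, so you could in principle prove it first, but as written your proposal rests on an unproven lemma whose sketched proof misses the crux (the unknown additive constant). Without it, the limit point $y^\ast$ could a priori lie in $M_2$, and then your normalization argument only yields that $\phi a$ is a nearest point of $F_2$ to $y^\ast$, not $y^\ast=\phi a$.

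For comparison, the paper's proof of this step is a one-liner that avoids boundary limits and injectivity altogether: for $z_1,z_2\in\partial M$ the triangle inequality gives $D_x(z_1,z_2)\leq d(z_1,z_2)$ for every $x\in M$, and letting $x\to z_2$ through $M$ gives equality in the limit, so $d(z_1,z_2)=\sup_{x\in M}D_x(z_1,z_2)$ (formula \eqref{boundary distance}); since this is a supremum of the values of the \emph{unindexed} family of functions at a fixed pair of points, the data equality \eqref{Data1b} immediately yields $d_1(a,b)=d_2(\phi a,\phi b)$ for $a,b\in\partial M_1$. I would also note that your final step with $z_a=\exp_a(\epsilon\nu_a)$ is unnecessary even within your scheme: once $D^1_a=D^2_{\phi a}(\phi(\cdot),\phi(\cdot))$ is known, evaluating at $(z_1,z_2)=(b,a)$ gives $d_1(a,b)=d_2(\phi a,\phi b)$ directly.
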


We postpone the proof of this proposition and the other results in the introduction
and give the proofs later in the paper.

The main theorem of the paper is the following:
\begin{Th}
Let $(N_1,g_1)$ and $(N_2,g_2)$ be compact and connected Riemannian manifolds, without boundary, 
{of dimension $n\geq 2$.} 
Let $M_j\subset N_j$ be open sets and define closed sets $F_j=N_j \setminus M_j$. Assume that $F^{int}_j\neq \emptyset$ and that $F_j$ is a smooth manifold with smooth boundary $\partial F_j=\partial M_j$. Also, suppose 
that  assumptions \eqref{Data1a}-\eqref{Data1b} are valid, that is,  the distance difference data  for sets $M_1$  and $M_2$ are equivalent. 
\matti{Then the
manifolds $(N_1,g_1)$ and $(N_2,g_2)$ are isometric.}
\label{main theorem}
\end{Th}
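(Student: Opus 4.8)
The strategy is to reconstruct $(N_j,g_j)$ from the distance difference data in a coordinate-independent way, so that the equivalence of data for $M_1$ and $M_2$ forces the reconstructed manifolds to be isometric. By Proposition \ref{bigger data} we may and do work with the larger data set $\D_j(N_j)\subset C(F_j\times F_j)$, which is intrinsically determined by the given data, and with the isometry $\phi:F_1\to F_2$. The first step is to show that the map $\D_j:N_j\to C(F_j\times F_j)$, $x\mapsto D_x^j$, is a topological embedding. Injectivity is the key point: if $D_x^j=D_y^j$ then $d_j(z,x)-d_j(z,y)$ is constant in $z\in F_j$; evaluating this along a minimizing geodesic from a suitable interior point of $F_j$ toward $x$ (and using that $F_j^{int}\neq\emptyset$, so we have an open set of observation points) should pin down $x=y$ — essentially because the gradient of $z\mapsto d_j(z,x)$ at an interior point $z$ where $d_j(\cdot,x)$ is smooth is a unit covector pointing along the geodesic to $x$, so knowing these gradients for all interior $z$ recovers $x$. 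Continuity of $\D_j$ is clear, and since $N_j$ is compact, $\D_j$ is a homeomorphism onto its image. Hence $\D_1(N_1)$ and $\D_2(N_2)$, being identified via $f\mapsto f\circ(\phi\times\phi)$, are homeomorphic, which transfers the topology of $N_1$ to $N_2$.

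The second step is to upgrade this to a diffeomorphism and then an isometry. Here the plan is to identify, for each $x\in N_j$, the set of points $z\in F_j^{int}$ at which $D_x^j(\cdot,z_0)$ (for a fixed reference $z_0$) is smooth; on that set its differential $d_z D_x^j$ is the unit covector $-\mathrm{grad}_z\, d_j(z,x)$ tangent to the minimizing geodesic from $z$ to $x$. This lets us read off, intrinsically from the data, a smooth structure near each $\D_j(x)$ and, more importantly, for $x$ in the (open, dense) set where $x$ has a unique closest point or more simply where $x\in F_j^{int}$ we recover coordinates directly; for $x\in M_j$ we recover $x$ as the point where two such geodesics emanating from distinct interior observation points meet, with the meeting length determined by the data via $D_x^j$ up to the single unknown additive constant $\min_z d_j(z,x)$, which can itself be characterized intrinsically (e.g.\ as the infimum over $z$ of a quantity reconstructible from first and second derivatives of $D_x^j$). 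Running this construction simultaneously for $j=1,2$ and matching via $\phi$ produces a diffeomorphism $\Psi:N_1\to N_2$ extending $\phi$; pulling back $g_2$ and comparing distance functions — which agree on $F_1\times N_1$ by construction and hence everywhere, since every point of $N_1$ lies on a minimizing geodesic from $F_1^{int}$ — shows $\Psi$ is a Riemannian isometry.

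The main obstacle, and the part that needs genuine care rather than routine verification, is the recovery of the \emph{interior} metric $g_j|_{M_j}$ and of the differentiable structure on $M_j$ purely from the distance difference functions: unlike the boundary distance function data, a single $D_x^j$ only gives differences, so the usual trick of taking $z\to x$ is unavailable for $x\in M_j$, and one must instead triangulate $x$ using geodesics from several interior observation points and control how these vary with $x$. Establishing that this triangulation map is smooth, that its differential is invertible (so it yields honest coordinates), and that the reconstructed metric tensor really equals $g_j$, is the crux; it presumably relies on a careful analysis of the cut locus and of where $z\mapsto d_j(z,x)$ fails to be smooth, together with the assumption $F_j^{int}\neq\emptyset$ which, as the appendix counterexample shows, is indispensable.
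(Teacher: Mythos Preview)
Your overall architecture --- extend to the full data via Proposition~\ref{bigger data}, show $\D_j$ is a topological embedding, upgrade to a diffeomorphism using gradients of $D_x^j$ as coordinates --- tracks the paper's structure closely through the first two stages, and your sketches there are essentially on target.

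The genuine gap is in the isometry step. You assert that the distance functions ``agree on $F_1\times N_1$ by construction,'' but this is exactly what the data does \emph{not} give you: for $x\in M_j$ you only ever see differences $d_j(z_1,x)-d_j(z_2,x)$, so $d_j(\cdot,x)$ is determined only up to an unknown additive constant $c(x)$. Your proposal to recover $c(x)$ as ``the infimum over $z$ of a quantity reconstructible from first and second derivatives of $D_x^j$'' does not work in any obvious way: the Hessian of $d_j(\cdot,x)$ at $z\in F_j^{int}$ encodes the shape operator of the distance sphere through $z$, which depends via a Riccati equation on the curvature along the entire geodesic from $z$ to $x$, most of which lies in the unknown region $M_j$. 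There is no local-in-$F_j$ computation that extracts $d_j(z,x)$ itself. And even if one somehow had all of $d_j|_{F_j\times N_j}$, the further claim that this forces the metrics to agree on $M_j$ is a separate nontrivial theorem (essentially the boundary distance representation result), not the one-line consequence you suggest.

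The paper takes a quite different route here. From the data one can reconstruct, for each $z\in F^{int}$ and $\xi\in S_zN$, the \emph{unparameterized} geodesic segment $\gamma_{z,-\xi}((0,\tau(z,-\xi)))$ as a point set: it is precisely the set of $x$ for which $\nabla D_x(\cdot,z_1)|_{z_1}=\xi$. Matching these under $\Psi$ shows that the pre-geodesics of $g_1$ and of $\Psi_*g_2$ through any point $p\in N_1$ coincide for an open cone of directions in $T_pN_1$. A computation with the pre-geodesic equation then forces the two Levi-Civita connections to differ by a projective change $\widetilde\Gamma^k_{ij}=\Gamma^k_{ij}+\delta^k_i\varphi_j+\delta^k_j\varphi_i$ for some smooth $1$-form $\varphi$, so the metrics are geodesically equivalent on all of $N_1$. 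The final identification $g_1=\Psi_*g_2$ is not elementary: the paper invokes the Matveev--Topalov theorem (an integral of the geodesic flow built from $\det g/\det\tilde g$) together with $g_1=\Psi_*g_2$ on the open set $F_1^{int}$ to force equality everywhere. This machinery is doing real work that your triangulation idea does not replace.
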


We prove Theorem \ref{main theorem} in {Section \ref{sec: proof of main result}}. This proof is divided into 5 {subsections}. In the first we set notations and consider some basic facts about geodesics. In the second we prove Proposition \ref{bigger data}. In the third we show that manifolds $(N_j, g_j)$ are homeomorphic. In the fourth  {subsection} we will construct smooth atlases with which we show that manifolds $(N_j, g_j)$ are diffeomorphic. In the fifth  {subsection} we will use techniques developed in papers \cite{Ma} and \cite{MaTo} to prove that manifolds $(N_j, g_j)$ are isometric.

 {Finally, in Section \ref{subset: application} we give an example how the main result can be applied for an inverse source 
problem for a geometric wave equation.}


\subsection{Embeddings of a Riemannian manifold.}
{
A classical distance function representation of a Riemannian manifold is 
 the Kuratowski-Wojdyslawski embedding, 
 $$\mathcal K:x\mapsto \dist_M(x,\cdotp),$$
 from $M$  to the space  of continuous functions $C(M)$  on it.
 The mapping $\mathcal K:M\to C(M)$ is an isometry
so that $\mathcal K(M)$ is an isometric representation of $M$ in a vector space.

Another important example is the Berard-Besson-Gallot representation \cite{Besson}
\ba
& &\mathcal G:M\to C(M\times \R_+), \quad \mathcal G(x)=\Phi_M(x,\,\cdotp,\,\cdotp)
\ea
where
 $(x,y,t)\mapsto \Phi_M(x,y,t)$  is the heat kernel of the manifold $(M,g)$.
 The asymptotics of the heat kernel $\Phi_M(x,y,t)$,  as $t\to 0$,
 determines the distance $d(x,y)$, and by endowing $C(M\times \R_+)$ with
 a suitable topology, the image $\mathcal G(M)\subset C(M\times \R_+)$
 can be considered as an embedded image of the manifold $M$.

\mtext{
Theorem \ref{main theorem} implies that the set $\D(M)=\{D_x;\ x\in M\}$
can be considered as  an embedded image (or a representation) of the manifold $(M,g)$ in the space 
$C(F\times F)$ in the embedding $x\mapsto D_x$.
Moreover, in the proof of Theorem \ref{main theorem} we show that $(F,g|_F)$ and 
the set $\D(M)$ determine uniquely an atlas of differentiable coordinates and a metric tensor on
$\D(M)$. These structures   make $\D(M)$ a Riemannian manifold that is 
isometric to the original manifold $M$. Note that the metric is different than
the one inherited from the inclusion $\D(M)\subset C(F\times F)$. Hence, $\D(M)$ can be considered as a representation of the manifold $M$, given
in terms of the distance difference functions, and we call it the {\it distance difference representation} of  the manifold of $M$ in $C(F\times F)$.

The embedding $\D$ is different to the above embeddings $\mathcal K$ and $\mathcal G$
in the following way that makes it important for inverse problems: With $\D$
one does not  need to know a priori the set $M$ to consider the function space $C(F\times F)$
into which the manifold $M$  is embedded.
Similar types of embedding have been also considered in the context of boundary distance functions, see Subsection \ref{subsec: RM}.

In addition to the above tensor $g$ on $N$, let us consider a sequence of metric tensors $g_k$, $k\in \Z_+$  on the manifold $N$ and
assume that  $g_k|_F=g|_F$  on $F\subset N$.
We denote the Riemannian manifolds
$(N\setminus F,g_k|_{N\setminus F})$, having the boundary $\p F$, by
 $(M_k,g_k)$. Also,  we denote
by $\D(M_k)\subset C(F\times F)$ the distance difference representations of  the manifolds
$(M_k,g_k)$ and let  $d_H(X_1,X_2)$ denote the  Hausdorff distance of sets $X_1,X_2\subset C(F\times F)$.
When $d_H(\D(M_k),\D(M))\to 0$, as $k\to \infty$,
an interesting open question is, if the manifolds $(M_k,g_k)$ converge to $(M,g)$ in the Gromov-Hausdorff 
topology.
This type of questions have been studied for other representations e.g.\ in \cite{AKKLT,Besson}, but
this question is outside the context of this paper.}}

\subsection{Earlier results and the related inverse problems}
\label{sec: earlier}

The inverse problem for the distance difference function is closely related to 
many other inverse problems. We review some results below:

\subsubsection{Boundary distance functions and the inverse problem for a wave equation}
\label{subsec: RM}

The reconstruction of a compact Riemannian manifold $(M,g)$  with boundary from distance information has been considered 
e.g.\ in  \cite{IBSP,Ku5}. There, one defines for $x\in M$ the boundary distance function
$r_x:\p M\to \R$ given by $r_x(z)=d(x,z)$.  Assume that one is given the boundary $\p M$
and the collection of boundary distance functions corresponding to all $x\in M$, that is, 
\begin{equation}
\p M\quad\hbox{and}\quad \mathcal{R}(M):=\{r_x\in C(\partial M);\ x\in M\}.
\label{boundary distance data}
\end{equation}
It is shown in \cite{IBSP,Ku5} that only knowing the boundary distance data \eqref{boundary distance data}
one can reconstruct the topology of $M$, the differentiable structure  of $M$ (i.e., an atlas of
$C^\infty$-smooth coordinates), and the Riemannian metric tensor $g$.
\mtext{Thus $\mathcal{R}(M)\subset C(\p M)$ can be considered as an isometric copy of $M$,
and the pair $(\p M,\mathcal{R}(M))$  is called the boundary distance representation of $M$,
see \cite{IBSP,Ku5}.}
 \matti{Similar results for non-compact manifolds   are considered in
\cite{IKL}. Constructive solutions to determine the metric from the boundary distance functions
have been developed in \cite{deHoop1, deHoop2} using a Riccati equation {\cite{Pe} for metric tensor
in boundary normal coordinates} and in \cite{Pestov-Uhlmann} using the properties of the conformal 
killing tensor.  


Physically speaking, functions $r_x$ are determined
by the wave fronts of waves produced by the delta-sources $\delta_{x,0}$ that take place at the point $x$ at time $s=0$. The distance difference functions $D_x^{\p M}$ are determined by
the wave fronts of waves produced by the delta-sources $\delta_{x,s}$ that take place at the point $x$ at an unknown time $s\in \R$. 

Many hyperbolic inverse problems with time-independent metric reduce to 
the problem of reconstructing the isometry type of the manifold from its boundary distance functions.
Indeed, in \cite{KaKu2,IBSP,KrKL,KL2,KL3,Oksanen3,Oksanen1,Oksanen2} it has been show that the boundary measurements for the scalar wave equation,
Dirac equation, and for Maxwell's system (with isotropic scalar impedance) determine the boundary distance functions of the Riemannian
metric associated to the wave velocity.

} 


\subsubsection{Hybrid inverse problems}
\matti{Hybrid inverse problems are based on
coupling two physical models together. In a typical setting of these problems, 
the first physical system is such that by
controlling the boundary values of its solution, one can produce high amplitude waves, 
that create, e.g. due to energy absorption, a source for the second physical system.
Typically, the second physical system corresponds to a hyperbolic equation with {the metric
$$ds^2=c(x)^{-2}((dx^1)^2+\dots+(dx^n)^2)$$  corresponding to}
 the wave speed $c(x)$. Examples of such hybrid inverse problems
 are encountered in thermo-acoustic and photo-acoustic imaging
 see e.g.\ \cite{Ammari1,Bal1,Bal2,Bal3,Bal4,StU1,StU1B,StU3,StU4}
 and quantitative elastography \cite{Bercoff,Hoskins,Jeong}.
 In some cases one can use beam forming in the first physical system
 to make the source for the second physical system to be strongly localized,
 that is, to be close to a point-source, see e.g. \cite{Bercoff,Jeong}.
   
   To simplify the above hybrid inverse problem, one often can do approximations
 by assuming that the wave speed in the second physical system is either a constant
or precisely known. Usually one also assumes that the time moment when the
source for the second physical system is produced is exactly known.  However, when these approximations are not made,
the wave speed $c(x)$ needs to be determined, too. {When
the source of the second physical system is produced at the given time
in the whole domain $M$, the problem is studied in \cite{Liu,StU2}.
In the cases when 
the source of the second physical system} 
are close to a point sources,  one can try to determine
$c(x)$ from the wavefronts that are produced by the point sources and are observed outside the domain $M$. 
This problem can be uniquely solved by
Theorem \ref{main theorem} and we consider it in detail below in
Section \ref{subset: application}.

}

\subsubsection{Inverse problems of  micro-earthquakes.}
The earthquakes are produced by
 the accumulated elastic strain that at some time suddenly  produce an earthquake. As mentioned above, the small magnitude earthquakes (e.g. the micro-earthquakes of magnitude $1<M<3$) appear so frequently 
 that the surface observations of the produced elastic waves have been proposed to be used in the imaging of the 
Earth near  active faults \cite{Kayal,Sava}.  {The so-called time-reversal techniques to study
the inverse source and medium problems arising from  the micro-seismology have been developed in
\cite{quake1,dHT,quake2}.}

 In geophysical studies, one often 
approximates the elastic waves with scalar waves satisfying a wave equation. Let us also
assume that the sources of such earthquakes are point-like and that one does measurements over so long time that 
the source-points are sufficiently dense  in the studied volume. Then the inverse problem 
 of determining the 
the speed of the waves in the studied volume  from the surface observations of the micro-earthquakes is close
to the problem studied in this paper. We note that the above assumptions are highly idealized: For example, considering
the system of elastic equations would lead to a problem where travel times are determined by a Finsler metric instead
of a Riemannian one. One possible way to continue the line of research conducted in this paper, would be to study, if the result of Theorem \ref{main theorem} holds on Finsler manifolds. The authors have not yet addressed this issue.


\subsubsection{Broken scattering relation}
\matti{If the sign in the definition of the distance difference functions is changed 
in (\ref{def: dist diff}), we come to  distance sum functions
\beq\label{def: dist sum}
D^+_x(z_1,z_2)=d(z_1,x)+d(z_2,x),\quad x\in M,\ z_1,z_2\in N\setminus M.
\eeq
This function gives the length of the broken geodesic
that is the union of the shortest geodesics connecting $z_1$ to $x$ and
the shortest geodesics connecting $x$ to $z_2$. Also, the gradients of $D^+_x(z_1,z_2)$ with respect to 
$z_1$ and $z_2$ give the velocity vectors of these geodesics. 
The functions (\ref{def: dist sum}) appear
in the study of the radiative transfer equation on manifold $(N,g)$, see \cite{CS2,McDow1,McDow2,McDow3,Patrolia}. 
Also, the inverse problem of determining the manifold $(M,g)$ from
the broken geodesic data, consisting \mtext{of the initial and
the final points and directions, 
and the total length, of the broken geodesics, has been considered in
\cite{KLU-ajm}.}}

\section{Proof of the main result}\label{sec: proof of main result}
\subsubsection{Notations and basic facts on pre-geodesics}
When we are concerning only one manifold, we use the shorthand notations $M,N,F,$ and $g$ instead of ones with sub-indexes. 

Let $(N,g)$ be a compact and connected Riemannian $n$-manifold without boundary and $n\geq 2$. We assume that $M \subset N$ is an open set of $N$ and the set $F= N \setminus M $ is a compact,
 $F$ contains an open set
 and has a smooth boundary. Suppose  we know the Riemannian structure of manifold $(F,g|_F)$.  
\medskip

We denote the Riemannian connection of the metric $g$ by $\nabla$. A unit speed geodesic of $(N,g)$ emanating from a point \matti{$(p,\xi)\in SN$ is denoted by $\gamma_{p,\xi}(t)=\exp_p(t\xi)$. 
Here, $SN=\{(p,\xi)\in TN;\ \|\xi\|_g=1\}$.} 
We use a short hand notation $D_t:=\nabla_{\dot{\gamma}_{p,\xi}(t)}$ for the covariant differentiation in the direction $\dot{\gamma}_{p,\xi}$ for vector fields along geodesic $\gamma_{p,\xi}$.

Let $p \in N$ and choose some smooth coordinates $(U,X)$ at point $p$. Denote the Christoffel symbols of connection $\nabla$ by $\Gamma^k_{i,j}$.

{We say that a curve $\alpha([t_1,t_2])$ is distance minimizing if the length of this curve
is equal to the distance between its end points $\alpha(t_1)$ and $\alpha(t_2)$. Also, 
a geodesic that is distance minimizing is called a minimizing geodesic.}

We say that a curve $\alpha([t_1,t_2])$ is a \textit{pre-geodesic}, if  $\alpha(t)$ is a $C^1$-smooth curve such that $\dot\a(t)\not =0$ on $t\in [t_1,t_2]$, and $\alpha([t_1,t_2])$ can be re-parameterized so that it becomes a geodesic. 

\mtext{Let us next recall some properties of the pre-geodesics. Let us consider a geodesic 
curve $\gamma:\R \rightarrow N$,  satisfying in local coordinates the equation}
\begin{equation}
D_t\dot{\gamma}(t)=\frac{d^2\gamma^k}{dt^2}(t) +\Gamma^k_{i,j}(\gamma(t))\frac{d\gamma^{i}}{dt}(t)\frac{d\gamma^{j}}{dt}(t)=0, \: k\in \{1,\ldots,n\}.
\label{geodesic equation 1}
\end{equation}
We need the following result, often credited to 
 Levi-Civita \cite{LC}.
\begin{Le}
Let $\kappa:\R\rightarrow \R$ be continuous and $\tilde \gamma:\R \rightarrow N$ be a $C^2$-curve that satisfies the  equation
\begin{equation}
\frac{d^2\tilde \gamma^k}{ds^2}(s) +\Gamma^k_{i,j}(\tilde \gamma(s))\frac{d\tilde \gamma^i}{ds}(s)\frac{d\tilde \gamma^j}{ds}(s)=\kappa(s)\frac{d\tilde \gamma^k}{ds}(s),  \: k\in \{1,\ldots,n\}.
\label{geodesic equation 4}
\end{equation}
Then there exists a change of parameters $t:\R \rightarrow \R$ satisfying
\begin{equation}
\frac{dt}{ds}(s)
=\exp\bigg(\int\limits_0^{s}\kappa(\tau)d\tau\bigg).
\label{proper change of variables for geo eq}
\end{equation}
such that curve $\gamma(t):=\tilde \gamma(s(t))$ solves the geodesic equation (\ref{geodesic equation 1}). Here $s(t)$ is the inverse function for $t(s)$.
\label{Equivalence of geodesic formulas}
\end{Le}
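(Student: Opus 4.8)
The plan is to reduce the statement to the familiar fact that a $C^2$ reparametrization of a geodesic satisfying \eqref{geodesic equation 4} becomes a genuine geodesic after the explicit change of variables \eqref{proper change of variables for geo eq}. First I would observe that the differential equation \eqref{geodesic equation 4} says precisely that the acceleration $D_s\dot{\tilde\gamma}(s)$ of the curve $\tilde\gamma$, computed with the Riemannian connection $\nabla$, is everywhere parallel to the velocity $\dot{\tilde\gamma}(s)$, with proportionality factor $\kappa(s)$; this is a coordinate-free restatement, and since $\dot{\tilde\gamma}(s)\neq 0$ (which we may assume, or derive from the hypotheses, so that $\tilde\gamma$ is a regular curve) the factor $\kappa$ is uniquely determined. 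This is the standard characterization of a pre-geodesic.

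Next I would introduce the function $t(s)$ defined by $\dfrac{dt}{ds}(s)=\exp\!\left(\int_0^s \kappa(\tau)\,d\tau\right)$. Since $\kappa$ is continuous, the right-hand side is a positive $C^1$ function of $s$, so $t(s)$ is a strictly increasing $C^2$ diffeomorphism of $\R$ onto its image (one should note that the image is an open interval, possibly not all of $\R$; for the purposes of the lemma one works on this interval, or one remarks that the reparametrized geodesic extends to all of $\R$ by completeness when $N$ is compact). Let $s(t)$ denote its inverse and set $\gamma(t):=\tilde\gamma(s(t))$. I would then compute $\dfrac{d\gamma^k}{dt}$ and $\dfrac{d^2\gamma^k}{dt^2}$ by the chain rule: writing $f(s):=\dfrac{dt}{ds}(s)$, one has $\dfrac{d\gamma^k}{dt}=f^{-1}\dfrac{d\tilde\gamma^k}{ds}$ and
\[
\frac{d^2\gamma^k}{dt^2}
= f^{-2}\frac{d^2\tilde\gamma^k}{ds^2} - f^{-3}\frac{df}{ds}\frac{d\tilde\gamma^k}{ds}.
\]
Since $\dfrac{df}{ds}=\kappa f$, the second term equals $-f^{-2}\kappa\,\dfrac{d\tilde\gamma^k}{ds}$, so
\[
\frac{d^2\gamma^k}{dt^2}
= f^{-2}\left(\frac{d^2\tilde\gamma^k}{ds^2} - \kappa\,\frac{d\tilde\gamma^k}{ds}\right).
\]

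Finally I would substitute \eqref{geodesic equation 4} into this expression. By hypothesis $\dfrac{d^2\tilde\gamma^k}{ds^2}-\kappa\dfrac{d\tilde\gamma^k}{ds} = -\Gamma^k_{i,j}(\tilde\gamma)\,\dfrac{d\tilde\gamma^i}{ds}\dfrac{d\tilde\gamma^j}{ds}$, and using $\dfrac{d\tilde\gamma^i}{ds}=f\,\dfrac{d\gamma^i}{dt}$ together with $\tilde\gamma(s(t))=\gamma(t)$, the factors of $f^{-2}$ and $f^2$ cancel and one gets
\[
\frac{d^2\gamma^k}{dt^2} + \Gamma^k_{i,j}(\gamma)\,\frac{d\gamma^i}{dt}\frac{d\gamma^j}{dt}=0,
\]
which is exactly the geodesic equation \eqref{geodesic equation 1}. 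This completes the argument. The computation is entirely routine once the ODE \eqref{geodesic equation 4} is read as ``acceleration proportional to velocity''; the only genuinely delicate point is the bookkeeping of domains — ensuring $t(s)$ is a well-defined diffeomorphism onto an interval on which the reparametrized curve is defined — and, if one wants the conclusion literally on all of $\R$, invoking geodesic completeness of the compact manifold $(N,g)$ to extend $\gamma$.
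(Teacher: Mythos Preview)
Your proposal is correct and is precisely the ``direct computation'' that the paper alludes to but omits: the chain-rule bookkeeping with $f(s)=\exp\!\big(\int_0^s\kappa\big)$ and the cancellation of $f^{\pm 2}$ are exactly what is needed. Your remarks on the domain of $t(s)$ and the appeal to geodesic completeness go slightly beyond what the paper states, but they are appropriate caveats and do not deviate from the intended argument.
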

\begin{proof} \mtext{The proof is a direct computation.}
\end{proof}
Let us now consider a family $\mathcal{C}$ of  $C^2$-smooth curves defined on $U$. We denote by  $\Omega$ the subbundle of $TU$ that is deternined by  the velocity fields $(c,\dot c)$,  $c \in \mathcal{C}$.   More precisely a vector $(p,v) \in TU$ satisfies $
(p,v) \in \Omega \hbox{ if and only if there exist } a, t\in \R, c \in \mathcal{C} \hbox{ such that }  (p,v)=(c(t),a\dot{c}(t)).$ 
Let $f: \Omega \rightarrow \R$ be a function that satisfies 
\begin{equation}
f(av)=af(v), \textrm{ for all }a\in \R \textrm{ and } v \in\Omega,
\label{homogeneous of degree 1}
\end{equation}
i.e., it is homogeneous of degree 1. Moreover we assume that $f$ satisfies the equation 
\begin{equation}
\frac{d^2\tilde \gamma^k}{ds^2}(s) +\Gamma^k_{i,j}(\tilde \gamma(s))\frac{d\tilde \gamma^i}{ds}(s)
\frac{d\tilde \gamma^j}{ds}(s)= f\bigg(\frac{d\tilde \gamma}{ds}(s)\bigg)\frac{d\tilde \gamma^k}{ds}(s),
\label{geodesic equation 3}
\end{equation}
for any $\widetilde \gamma \in \mathcal{C}$. By Lemma \ref{Equivalence of geodesic formulas} each $\widetilde{\gamma} \in \mathcal{C}$ is a pre-geodesic of connection $\nabla$. 

Next we will show that also the  converse result for the pre-geodesics hold. Let $\widetilde \gamma$ a pre-geodesic passing over the point $p$. We assume that $\widetilde \gamma(0)=p$. Let $s(t)$ be such a re-parametrization of $\widetilde \gamma$ that $\widetilde \gamma(s(t))=: \gamma(t)$ satisfies the geodesic equation \eqref{geodesic equation 1}, $s(0)=0$ and $\frac{d}{dt}\widetilde \gamma(s(t))|_{t=0}\in S_pN$. Then by the chain rule it holds that 
$$
\frac{d^2\widetilde\gamma^k}{ds^2}(s) +\Gamma^k_{i,j}( \widetilde\gamma(s))\frac{d \widetilde\gamma^i}{ds}(s)\frac{d \widetilde \gamma^j}{dt}(s)=-\frac{\ddot s(t)}{\dot{s}(t)^2}\frac{d \widetilde\gamma^k}{ds}(s),  \: k\in \{1,\ldots,n\}.
$$
Let $\Omega$ be the subbundle of $TU$ that is determined by the velocity vectorfield $(\gamma, \dot{\gamma})$. We define $f:\Omega \to \R$ 
$$
f(q,v)=
\mp \|v\|_g\frac{\ddot s (t)}{\dot{s}(t)^2}, \hbox{ if } \frac{v}{\|v\|_g}=\pm\dot{\gamma}(t), \hbox{ for some } t \in \R .
$$

Thus equations \eqref{geodesic equation 1} and \eqref{geodesic equation 3} are equivalent in the sense that curves satisfying the latter one, for appropriate $f$, are also geodesics of metric $g$, but para\-metrized in a different way. 
\medskip

The distance function of $N$ is denoted by {$d(x,y)=d_N(x,y)$ for $x,y\in N$.} 
The normal vector field of $\p M$, pointing inside $M$, is denoted by $\nu$. 
The boundary cut locus function is $\tau_{\partial M}: \partial M \rightarrow \R_+,$
\begin{equation}
\tau_{\partial M}(z)=\sup\{t>0;\ d(\gamma_{z,\nu}(t), \p M)=t \}.
\label{dist to boundary cut point}
\end{equation}
Also, we use the cut locus function of $N$  that is $\tau: TN \rightarrow \R_+$, 
\begin{equation}
\tau(x,\xi)=\sup\{t>0;\ d(\exp_x(t\xi),x)=t\}
\label{dist to cut point}.
\end{equation}
Functions $\tau_{\p M}(z)$ and $\tau(x,\xi)$ 
are continuous and satisfy the inequality (see {Lemma 2.13} of \cite{IBSP})
\beq\label{tau inequalities}
\tau(z,\nu(z))> \tau_{\p M}(z),\quad z\in \p M. 
\eeq
\subsection{Extension of data}

In this subsection we prove Proposition \ref{bigger data}.

Let 
 $z_1, z_2 \in \partial F=\partial M$.
Then using the triangular inequality and that $d(z_1,z_2)=D_{z_2}(z_1,z_2)$  we see easily that
\begin{equation}
d(z_1,z_2)=\sup_{x \in  M}D_x(z_1,z_2).
\label{boundary distance}
\end{equation}
Thus $\mathcal D(M)$ determines the distances of the boundary points,
that is, the function $d|_{\p M\times \p M}:\p M\times \p M\to \R$.


\begin{Le}
Suppose that \eqref{Data1a}-\eqref{Data1b} are valid. Then for every $w,z \in F_1$ it holds that $d_1(w,z)= d_2(\phi(w),\phi(z)).$
\label{distance coincide outside M}
\end{Le}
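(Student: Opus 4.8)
The plan is to express the ambient distance $d_1$ restricted to $F_1\times F_1$ entirely in terms of quantities that belong to the data, namely the Riemannian manifold $(F_1,g_1|_{F_1})$ --- equivalently its intrinsic length distance, which I denote $d_{F_1}$ --- together with the distances between boundary points $d_1|_{\p F_1\times\p F_1}$. The latter is indeed determined by the data: applying \eqref{boundary distance} in $N_1$ and in $N_2$ and using \eqref{Data1b} we get, for $p,q\in\p F_1$,
\[
d_1(p,q)=\sup_{x\in M_1}D^1_x(p,q)=\sup_{y\in M_2}D^2_y(\phi(p),\phi(q))=d_2(\phi(p),\phi(q)),
\]
where we also use that the diffeomorphism $\phi$ from \eqref{Data1a} carries $\p F_1$ onto $\p F_2$. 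Since $\phi$ is moreover a Riemannian isometry of $(F_1,g_1|_{F_1})$ onto $(F_2,g_2|_{F_2})$, it also intertwines the intrinsic distances $d_{F_1}$ and $d_{F_2}$.

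Next I would introduce, for $p,q\in F_j$, the link cost
\[
\ell_j(p,q)=\begin{cases} d_j(p,q),&\text{if }p,q\in\p F_j,\\ d_{F_j}(p,q),&\text{otherwise}\end{cases}
\]
(recall $d_j\le d_{F_j}$ always), and define $\hat d_j(w,z)$ to be the infimum of $\sum_{i=1}^m\ell_j(w_{i-1},w_i)$ over all finite chains $w=w_0,w_1,\dots,w_m=z$ in $F_j$. By the previous paragraph $\phi^\ast\ell_2=\ell_1$, hence $\phi^\ast\hat d_2=\hat d_1$. Consequently the lemma follows once we show that $\hat d_j=d_j$ on $F_j\times F_j$ for $j=1,2$, because then $d_1(w,z)=\hat d_1(w,z)=\hat d_2(\phi(w),\phi(z))=d_2(\phi(w),\phi(z))$.

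The inequality $\hat d_j\ge d_j$ is immediate from the triangle inequality, since $\ell_j\ge d_j$ by construction. For the converse, take a minimizing unit-speed geodesic $\gamma\colon[0,T]\to N_j$ with $\gamma(0)=w$, $\gamma(T)=z$, so $T=d_j(w,z)$. If $\gamma([0,T])\subset F_j$ then $\hat d_j(w,z)\le\ell_j(w,z)\le d_{F_j}(w,z)\le T$. Otherwise $A:=\gamma^{-1}(M_j)$ is a nonempty open subset of $(0,T)$; set $a_*=\inf A$ and $b_*=\sup A$. Since $M_j$ is open and $\gamma(0),\gamma(T)\notin M_j$, one checks that $\gamma([0,a_*])\subset F_j$, $\gamma([b_*,T])\subset F_j$, and $\gamma(a_*),\gamma(b_*)\in\p F_j$ (a point of $F_j$ lying in the closure of $M_j$ lies in $\p M_j=\p F_j$). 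Applying the three-link chain $w,\gamma(a_*),\gamma(b_*),z$ and bounding each link by the length of the corresponding sub-arc of $\gamma$ --- legitimate for the first and third links because those sub-arcs lie in $F_j$, and for the middle link because its endpoints lie on $\p F_j$, where $\ell_j$ agrees with $d_j$ --- gives $\hat d_j(w,z)\le a_*+(b_*-a_*)+(T-b_*)=T=d_j(w,z)$. The same computation applies verbatim in $N_2$.

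The step I expect to require the most care is this last decomposition of the minimizing geodesic. The naive attempt --- to follow $\gamma$ excursion by excursion as it enters and leaves $M_j$ --- runs into the fact that the crossing set $\gamma^{-1}(\p M_j)$ can be a complicated closed subset of $[0,T]$ with accumulation points, so the excursions into $M_j$ need not be enumerable in order. The device above avoids this entirely: one cuts $\gamma$ only at the two extreme crossing times $a_*=\inf A$ and $b_*=\sup A$, and all of the behaviour in between is absorbed into a single boundary-to-boundary link, whose cost is controlled purely through the boundary distance $d_j|_{\p F_j\times\p F_j}$ that \eqref{boundary distance} has already made available.
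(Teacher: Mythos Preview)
Your proof is correct and, underneath the chain-metric packaging, uses exactly the same geometric idea as the paper: cut a minimizing $N_j$-geodesic at its first and last contact with $\partial M_j$, control the outer segments by the intrinsic $F_j$-length (via the Riemannian isometry $\phi$), and control the inner segment by the boundary distance formula \eqref{boundary distance}. The paper simply carries out this three-piece decomposition directly to obtain $d_1(w,z)\ge d_2(\phi(w),\phi(z))$ and then swaps the roles of $N_1$ and $N_2$, without introducing the auxiliary quantities $\ell_j$ and $\hat d_j$; your chain-metric formulation is a harmless abstraction, but since you end up using only the single three-link chain $w,\gamma(a_*),\gamma(b_*),z$, it is not really needed.
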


The proof of the lemma below is very simple, but as Lemma \ref{distance coincide outside M} shows how the given data is extended to a larger data set, we present its proof. Notice that Lemma \ref{distance coincide outside M} proofs $(i)$ of the Proposition \ref{bigger data}. 

\begin{proof} Let $w,z \in F_1$. Let $\gamma$ be a minimizing unit speed geodesic in $N_1$ from $z$ to $w$ and denote $S=\gamma([0,d_1(w,z)])\cap \partial M_1$. {When $S= \emptyset$, using $\phi^\ast g_2=g_1$
we see that $d_1(w,z)\geq d_2(\phi(w),\phi(z))$.}

Next, consider the case when $S\neq \emptyset$.  Let $e_1,e_2 \in S$ be such that 
$$ 
d_1(w,e_1)=\min\{d_1(w,x):x\in S\} \textrm{ and } d_1(z,e_2)=\min\{d_1(z,x):x\in S\}.
$$
As \eqref{Data1a}-\eqref{Data1b} is valid,  the formula \eqref{boundary distance} implies $
d_1(e_1,e_2)=d_2(\phi(e_1),\phi(e_2)).
$
Since $\phi:F_1 \rightarrow F_2$ satisfies $\phi^\ast g_2=g_1$, 
$$
\begin{array}{l}
d_1(w,z)=d_1(w,e_1)+d_1(e_1,e_2)+d_1(e_2,z)
\\
\geq d_2(\phi(w),\phi(e_1))+d_2(\phi(e_1),\phi(e_2))+d_2(\phi(e_2),\phi(z))
\\
\geq d_2(\phi(w),\phi(z))
\end{array}
$$
The opposite inequality follows by changing the roles of $N_1$ and $N_2$.
\end{proof}

{Let us consider the case when $x\in F_1$. Then,
Lemma \ref{distance coincide outside M} implies that for $z_1,z_2 \in F_1$
we have 
\begin{equation}
\label{eq:D1=D2}
\begin{array}{lll}
D^1_x(z_1,z_2)&=&d_1(x,z_1)-d_1(x,z_2)\\
&=&d_2(\phi(x),\phi(z_1))-d_2(\phi(x),\phi(z_2))\\
&=&D^2_{\phi(x)}(\phi(x),\phi(z_2)).
\end{array}
\end{equation}
Hence,
\beq\label{eq subset}
\{D^1_x(\cdot,\cdot)\ ;\ x\in F_1\}\subset \{D^2_y(\phi(\cdot),\phi(\cdot))\ ;\ y \in F_2\}.
\eeq
Changing roles of $N_1$ and $N_2$  and considering $\phi^{-1}:F_2\to F_1$ instead of  the diffeomorphism
$\phi:F_1\to F_2$, we see that in formula (\ref{eq subset}) we have the equality.
This and formula (\ref{Data1b}), together with
Lemma \ref{distance coincide outside M}, imply Proposition \ref{bigger data}.}
\hfill q.e.d.

\subsection{Manifolds $N_1$ and $N_2$ are homeomorphic.}
 \matti {To simplify the notations, we will next in our considerations omit the sub-indexes of sets $M_1,N_1$, and $F_1$ and
just consider the sets $M,N$, and $F$.}

Let $x\in N$ and define a function $D_x:F \times F \rightarrow \R$ by a formula 
$$
D_x(z_1,z_2)=d(x,z_1)-d(x,z_2).
$$ 
Let $\mathcal{D}:N\rightarrow C(F \times F)$ be given by $\mathcal{D}(x)=D_x$.
Next, we consider the function space $C(F \times F)$ with the norm $\|f\|_{\infty}=\sup_{x,y\in F} |f(x,y)|$
\begin{Th}
The image $\D(N)=\{D_x;\ x\in N\}\subset C(F \times F)$ is a topological manifold homeomorphic to manifold $N$. Moreover, $\mathcal{D}(M)$ is homeomorphic to $M$.
\label{topolog. recons lemma}
\end{Th}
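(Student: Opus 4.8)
The plan is to show that $\D\colon N\to C(F\times F)$ is a continuous injection and then invoke compactness of $N$ to upgrade it to a homeomorphism onto its image. Continuity of $\D$ is essentially the triangle inequality: for $x,x'\in N$ and any $z_1,z_2\in F$ one has $|D_x(z_1,z_2)-D_{x'}(z_1,z_2)|\le 2d(x,x')$, so $\|\D(x)-\D(x')\|_\infty\le 2d(x,x')$ and $\D$ is even Lipschitz. Hence the only real content is injectivity: if $D_x=D_{x'}$ as elements of $C(F\times F)$, then $x=x'$.

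To prove injectivity I would argue as follows. Fix a reference point $z_0\in F^{int}$; then the function $h_x(z):=D_x(z,z_0)=d(x,z)-d(x,z_0)$ is determined by $D_x$, and $D_x=D_{x'}$ forces $d(x,z)-d(x,z')=d(x',z)-d(x',z')$ for all $z,z'\in F$, i.e.\ $d(x,\cdot)-d(x',\cdot)$ is constant, say equal to $c$, on $F$. The key step is to use the nonempty interior $F^{int}$: take $z\in F^{int}$ and a short minimizing geodesic segment through $z$ staying inside $F^{int}$. The function $z\mapsto d(x,z)$ has gradient a unit covector at points where $z$ is not in the cut locus of $x$, and similarly for $x'$; since $d(x,\cdot)$ and $d(x',\cdot)$ differ by a constant on the open set $F^{int}$, their gradients agree there (at least at points regular for both), so $\nabla_z d(x,\cdot)=\nabla_z d(x',\cdot)$ on a dense subset of $F^{int}$. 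A unit-speed geodesic started at such a $z$ in the direction $-\nabla_z d(x,\cdot)$ is the minimizing geodesic back to $x$, and the same geodesic is the minimizing geodesic back to $x'$; running it for the appropriate lengths $d(x,z)$ and $d(x',z)=d(x,z)-c$ shows $x$ and $x'$ lie on a common geodesic ray from $z$. If $c=0$ this already gives $x=x'$; if $c\ne 0$ one derives a contradiction by comparing the constant $c$ computed from two different interior points $z,\tilde z$ whose minimizing directions from $x$ are not aligned — the interior $F^{int}$ being $n$-dimensional with $n\ge2$ guarantees such a pair exists. (Alternatively, and perhaps more cleanly, one observes that $d(x,z)-d(x',z)=c$ together with $|c|\le d(x,x')$ and, by choosing $z$ with $x'$ on the minimizing geodesic from $z$ to $x$ or vice versa, $|c|=d(x,x')$; pushing this on an open set of directions forces $d(x,x')=0$.)

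Once injectivity and continuity are established, the proof concludes by a standard topology argument: $N$ is compact and $C(F\times F)$ is Hausdorff, so the continuous bijection $\D\colon N\to\D(N)$ is a homeomorphism, and in particular $\D(N)$ is a topological manifold homeomorphic to $N$. Since $M\subset N$ is open and $\D$ is a homeomorphism onto its image, $\D(M)$ is open in $\D(N)$ and homeomorphic to $M$. Note that for part (ii), i.e.\ that $\D(M)$ (not just $\D(N)$) is recovered, one uses Proposition \ref{bigger data}: the data $\D(M)$ together with $(F,g|_F)$ determines $\D(N)$, and then the subset $\D(M)=\D(N)\setminus\D(F)$ is identified because $\D(F)$ is computable from $(F,g|_F)$ alone.

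I expect the main obstacle to be the injectivity argument, specifically the regularity issue that $z\mapsto d(x,z)$ is only Lipschitz and fails to be differentiable on the cut locus of $x$. Handling this requires either working at points of $F^{int}$ that are regular for both $x$ and $x'$ (a full-measure, dense set, by the standard structure theory of cut loci) and then passing to the limit, or using a first-variation / short-geodesic argument directly with distance functions rather than their gradients. The hypothesis $F^{int}\ne\emptyset$ is exactly what makes this work — as the authors note, it is indispensable, with a counterexample in Appendix A — and the hypothesis $n\ge2$ is what lets one find two interior points with non-parallel minimizing directions to kill the constant $c$.
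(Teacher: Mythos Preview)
Your overall strategy matches the paper's: establish that $\D$ is $2$-Lipschitz via the triangle inequality, prove injectivity by showing that $D_x=D_{x'}$ forces $x$ and $x'$ to lie on a common minimizing geodesic from any $q\in F^{int}$ and then exploit $n\ge2$ to reach a contradiction, and conclude with the compact-to-Hausdorff argument. Two points of divergence deserve comment.

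First, the paper avoids differentiability entirely. Rather than invoking $\nabla_z d(x,\cdot)$, it takes a minimizing geodesic $\gamma_{q,\eta}$ from $q\in F^{int}$ to $x$, picks $p=\gamma_{q,\eta}(s)$ with $s$ small enough that $p\in F^{int}$, and computes directly from $D_x(p,q)=D_{x'}(p,q)$ that $d(q,p)+d(p,x')=d(q,x')$. Hence $p$ lies on a minimizing geodesic from $q$ to $x'$, and since this holds for all small $s$, that geodesic must be $\gamma_{q,\eta}$ itself. This sidesteps the cut-locus regularity issue you correctly flag as the main obstacle.

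Second, and more importantly, your final contradiction is not correctly stated. The constant $c=d(x,z)-d(x',z)$ is independent of $z$, so ``comparing $c$ computed from two different interior points'' yields nothing. Your alternative (``$|c|=d(x,x')$; pushing on an open set of directions forces $d(x,x')=0$'') also does not close the argument: once $x,x'$ lie on a common minimizing geodesic from $z$, the segment between them has length $|c|=d(x,x')$ regardless of $z$, and varying $z$ does not change that equality. The paper's contradiction is the one you are reaching for but do not write down: assuming $\ell_x=d(q,x)<d(q,x')=\ell_{x'}$, choose $\hat q\in F^{int}$ not on the curve $\gamma_{q,\eta}(\R)$ (this is where $n\ge2$ enters), obtain a second minimizing geodesic $\gamma_{\hat q,\hat\eta}$ through $x$ and $x'$, and let $\beta$ be its segment from $x$ to $x'$. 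The concatenation of $\gamma_{q,\eta}([0,\ell_x])$ with $\beta$ then has length $\ell_x+(\ell_{x'}-\ell_x)=\ell_{x'}=d(q,x')$, so it is a distance-minimizing path from $q$ to $x'$ with a genuine corner at $x$, which is impossible. Your closing remark about Proposition~\ref{bigger data} is extraneous to this theorem, whose statement concerns only the homeomorphism and not the reconstruction from data.
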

\begin{proof} The proof consists of four short steps.
\medskip

\textit{Step 1}
First, we will show that
$\D$ is 2-Lipschitz and therefore continuous.
Let 
 $x,y\in N$. Using the triangular inequality we see that
\beq\nonumber 
\|D_x-D_y\|_{\infty}&=&\sup_{z_1,z_2\in F} |D_x(z_1,z_2)-D_y(z_1,z_2)|
\\ \label{eka}
& &\hspace{-2cm} \leq  \sup_{z_1,z_2\in F} |d(x,z_1)-d(y,z_1)|+|d(x,z_2)-d(y,z_2)|
\\ \nonumber
&& \hspace{-2cm} \leq 2d(x,y).
\eeq
\textit{Step 2.} Next we will show that $\D$ is injective. 
{Suppose that $x,y \in N$ are such that $D_x=D_y$ and $x\not =y$. 
Let $q\in F^{int}$ and denote 
$\ell_x=d(q,x)$ and $\ell_y=d(q,y)$. Next, without loss of generality, we assume that $\ell_x\leq \ell_y$.
Also, let  $\eta\in S_{q}N$ be 
such that $\gamma_{q,\eta}([0,\ell_x])$ is a minimizing geodesic from $q$ to $x$.
Let $s_1>0$  be such that $s_1<\min(\ell_x,\ell_y)$  and $\gamma_{q,\eta}([0,s_1])\subset F^{int}$.  
Consider a point $p=\gamma_{q,\eta}(s)$ with $s\in [0,s_1]$. Then we see that  
\ba
(d(q,p)+d(p,y))-d(q,y)&=&d(q,p)+D_y(p,q)\\
&=&d(q,p)+D_x(p,q)\\
&=&(d(q,p)+d(p,x))-d(q,x)=0
\ea
and hence $p$  is on a minimizing geodesic
from $q$ to $y$. 

Let us consider a minimizing geodesic $\a$ from $p$ to $y$ with the length
$\ell_y-s$.
Then the union of the geodesics $\gamma_{q,\eta}([0,s])$  and $\a$  
is a distance minimizing curve from $q$ to $y$ and thus this union is a geodesic.
This implies that $\a$ is a continuation of the geodesics $\gamma_{q,\eta}([0,s])$
and hence $y=\gamma_{q,\eta}(\ell_y)$. Summarizing,
$\gamma_{q,\eta}([0,\ell_x])$  and $\gamma_{q,\eta}([0,\ell_y])$
are distance minimizing geodesics from $q$ to $x$ and $y$, respectively.
Since $x\not =y$, we have $\ell_x\not =\ell_y$. Then,
as we have assumed that   $\ell_x\leq \ell_y$, we see that  $\ell_x< \ell_y$.

Let $\hat q\in F^{int}$ be a such point that $\hat q$ is not on 
the curve  $\gamma_{q,\eta}(\R)$. Clearly, such a point exists as
$N$  has the dimension  $n\ge 2$. 
Let $\hat \ell_x=d(\hat q,x)$ and $\hat \ell_y=d(\hat q,y)$. Also, let  $\hat \eta\in S_{\hat q}N$ be 
such that $\gamma_{\hat q,\hat \eta}([0,\hat \ell_x])$ is  minimizing geodesic from $\hat q$ to $x$.
As above, we see that then 
$\gamma_{\hat q,\hat \eta}([0,\hat \ell_x])$  and $\gamma_{\hat q,\hat \eta}([0,\hat \ell_y])$
are distance minimizing geodesics from $\hat q$ to $x$ and $y$, respectively.
However, the geodesics $\gamma_{q,\eta}(\R)$ and
$\gamma_{\hat q,\hat \eta}(\R)$ do not coincide as point sets and hence 
the vectors $\dot\gamma_{q,\eta}(\ell_x)\in T_xN$ and   $\dot\gamma_{\hat q,\hat \eta}(\hat \ell_x)\in T_xN$
are not parallel.
Recall that $\ell_x<\ell_y$.
In the case when  $\hat \ell_x<\hat \ell_y$, let $\beta$ be the geodesic segment
$\gamma_{\hat q,\hat \eta}([\hat \ell_x,\hat \ell_y])$ connecting $x$ to $y$.
In the case when  $\hat \ell_x>\hat \ell_y$, let $\beta$ be the geodesic segment
$\gamma_{\hat q,\hat \eta}([\hat \ell_y,\hat \ell_x])$ connecting $x$ to $y$.

Then we see that the union of the paths $\gamma_{q, \eta}([0,\ell_x])$  and $\beta$
is a distance minimizing path from $q$ to $y$. As the vectors $\dot\gamma_{q,\eta}(\ell_x)$ and   $\dot\gamma_{\hat q,\hat \eta}(\hat \ell_x)$ are not parallel, we see that the union of these curves
 is not a geodesic. This is contradiction
and hence there are no $x,y \in N$  such that $D_x=D_y$ and $x\not =y$. 
Thus, $\D:N\to C(F\times F)$ is an injection.}

\begin{figure}[h]
 \begin{picture}(150,100)
  \put(-70,0){\includegraphics[width=300pt]{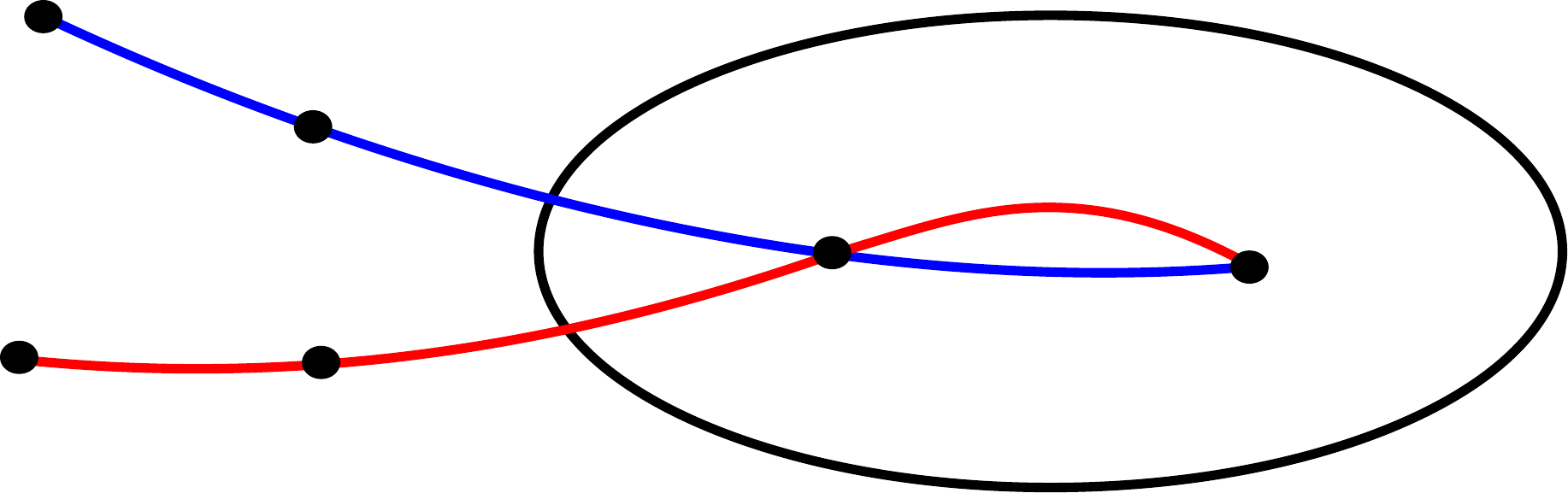}}
  \put(-66,98){$q$}
  \put(-12,78){$p$}
  \put(-70,34){$\widehat q$}
  \put(-10,35){$\widehat p$}
  \put(84,52){$x$}
  \put(170,52){$y$}
  \put(125,63){$\beta$}
  \put(190,90){$\p M$}
   \end{picture}
\caption{\it The setting in Step 2 in the proof of Theorem \ref{topolog. recons lemma}.
We consider  points $x,y\in N$  and points $p$ and $q$ such that $p$ is on a distance minimizing geodesic from
$q$ to $x$. 
Then this geodesic can be extended to a  distance minimizing geodesic from $q$ to $y$.
Similarly, the point $\hat p$ is on a distance minimizing geodesic from
$\hat q$ to $x$ and this geodesic can be extended to a  distance minimizing geodesic from $\hat q$ to $y$.
Then the union of the (blue) geodesic from $q$ to $x$ and the (red) geodesic $\beta$ is a length 
minimizing curve from $q$ to $y$ that is not a geodesic. 
}
\end{figure}

\medskip

\textit{Step 3.}
So far we have proved the continuity and injectivity of mapping $\D$. Since  the domain $N$ of the mapping $\D$ is compact and $\big(C(F \times F), \|\cdot\|_{\infty}\big)$ is a Hausdorff space as a metric space, it holds by basic results of topology that mapping $\D:N \rightarrow \D(N)$ is a homeomorphism.

\medskip

\textit{Step 4.}
By assumption $M \subset N$ is open and therefore mapping 
$\mathcal{D}:M\rightarrow \D(M)$ is open. This proves that the mapping $\mathcal{D}:M\rightarrow \D(M)$ is a homeomorphism.
\end{proof}
Define a mapping 
\begin{equation}
\Phi:C(F_2 \times F_2) \rightarrow C(F_1 \times F_1), \: \Phi(f)=f\circ (\phi \times \phi).
\label{capital Phi}
\end{equation}
Here $f\times h:X\times X \rightarrow Y\times Y$ is defined as $(f\times h)(x_1,x_2)=(f(x_1), h(x_2))\in Y \times Y$ for mappings $f,h: X \rightarrow Y$.
\begin{Th} Suppose that Riemannian manifolds $(N_1,g_1)$ and $(N_2,g_2)$ are as in section \ref{The problem setting of this paper} and {the assumptions of the Proposition \ref{bigger data} are} valid.  Let
$\Phi$ be given by (\ref{capital Phi}).
Then the mapping 
\begin{equation}
\Psi:=\D^{-1}_1\circ \Phi \circ \D_2:N_2 \rightarrow N_1
\label{capital Psi}
\end{equation}
is a homeomorphism. In addition, it holds that $\Psi^{-1}|_{F_1}=\phi$.
\label{Homeo of m1 and m2}
\end{Th}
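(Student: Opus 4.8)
The plan is to verify directly that the composition $\Psi=\D_1^{-1}\circ\Phi\circ\D_2$ is well-defined and then assemble known facts. First I would check that $\Phi\circ\D_2$ maps $N_2$ into $\D_1(N_1)$, so that applying $\D_1^{-1}$ makes sense. For $y\in N_2$ we have $\Phi(\D_2(y))=D_y^2\circ(\phi\times\phi)$, the function $(z_1,z_2)\mapsto D_y^2(\phi(z_1),\phi(z_2))$ on $F_1\times F_1$. By Proposition \ref{bigger data}(ii), i.e.\ formula \eqref{Data}, the set $\{D_y^2(\phi(\cdot),\phi(\cdot))\ ;\ y\in N_2\}$ equals $\{D_x^1(\cdot,\cdot)\ ;\ x\in N_1\}=\D_1(N_1)$; hence $\Phi(\D_2(y))\in\D_1(N_1)$ for every $y$, and since $\D_1:N_1\to\D_1(N_1)$ is a bijection (injectivity is Step 2 of Theorem \ref{topolog. recons lemma}), $\D_1^{-1}$ is defined on all of $\Phi(\D_2(N_2))$. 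Thus $\Psi:N_2\to N_1$ is a well-defined map.

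Next I would argue $\Psi$ is a homeomorphism. By Theorem \ref{topolog. recons lemma}, both $\D_1:N_1\to\D_1(N_1)$ and $\D_2:N_2\to\D_2(N_2)$ are homeomorphisms (onto their images with the $\|\cdot\|_\infty$-topology). The map $\Phi:C(F_2\times F_2)\to C(F_1\times F_1)$, $\Phi(f)=f\circ(\phi\times\phi)$, is a linear isometry: since $\phi$ is a diffeomorphism, $\phi\times\phi$ is a homeomorphism of $F_2\times F_2$ onto $F_1\times F_1$, so precomposition is a bijective isometry of the sup-norms, with inverse $g\mapsto g\circ(\phi^{-1}\times\phi^{-1})$. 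In particular $\Phi$ is a homeomorphism, and by \eqref{Data} it carries $\D_2(N_2)$ bijectively onto $\D_1(N_1)$. Therefore $\Psi=\D_1^{-1}\circ\Phi|_{\D_2(N_2)}\circ\D_2$ is a composition of three homeomorphisms and hence a homeomorphism $N_2\to N_1$.

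It remains to show $\Psi^{-1}|_{F_1}=\phi$, equivalently $\Psi(\phi(w))=w$ for all $w\in F_1$, i.e.\ $\Psi\circ\phi=\mathrm{id}_{F_1}$ where we view $\phi:F_1\to F_2\subset N_2$. Fix $w\in F_1$ and set $y=\phi(w)\in F_2$. Then $\Phi(\D_2(y))$ is the function $(z_1,z_2)\mapsto d_2(\phi(z_1),y)-d_2(\phi(z_2),y)=d_2(\phi(z_1),\phi(w))-d_2(\phi(z_2),\phi(w))$ on $F_1\times F_1$. By Lemma \ref{distance coincide outside M} (Proposition \ref{bigger data}(i)), $\phi$ is a metric isometry on $F_1$, so $d_2(\phi(z_i),\phi(w))=d_1(z_i,w)$ for $z_i,w\in F_1$; thus this function equals $(z_1,z_2)\mapsto d_1(z_1,w)-d_1(z_2,w)=D_w^1(z_1,z_2)=\D_1(w)$. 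Applying $\D_1^{-1}$ and using injectivity of $\D_1$ gives $\Psi(\phi(w))=\D_1^{-1}(\D_1(w))=w$. Hence $\Psi\circ\phi=\mathrm{id}_{F_1}$, so $\Psi|_{\phi(F_1)}$ has inverse $\phi$, i.e.\ $\Psi^{-1}|_{F_1}=\phi$.

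The only genuinely substantive inputs are Proposition \ref{bigger data} and Theorem \ref{topolog. recons lemma}, both already established; the main point requiring care is simply the bookkeeping in the last paragraph—making sure the identification $D_y^2\circ(\phi\times\phi)=D_w^1$ for $y=\phi(w)$ uses the isometry property on $F_1$ correctly and that $\D_1^{-1}$ is legitimately applied, which follows from injectivity. No serious obstacle is expected.
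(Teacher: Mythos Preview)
Your proposal is correct and follows essentially the same approach as the paper: both show $\Phi$ is a homeomorphism (the paper via continuity of precomposition and its inverse, you via the isometry observation), invoke Theorem \ref{topolog. recons lemma} for $\D_j$, and then verify the boundary identity by the same computation using Proposition \ref{bigger data}(i) (the paper computes $\Psi^{-1}(x)=\phi(x)$ directly via \eqref{eq:D1=D2}, you equivalently check $\Psi\circ\phi=\mathrm{id}_{F_1}$). Your extra care in checking that $\Phi(\D_2(N_2))\subset\D_1(N_1)$ via \eqref{Data} is a point the paper leaves implicit, but otherwise the arguments coincide.
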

\begin{proof} 
Due the Theorem \ref{topolog. recons lemma}, for the first claim, we only have to prove that mapping $\Phi$ is a homeomorphism. Note that mapping $\Phi$ has an inverse mapping $h\mapsto h\circ (\phi^{-1}\times \phi^{-1})$. Let $(x,y)\in F_1 \times F_1$ and $f,h\in C(F_2 \times F_2)$ then it follows
$$
|(\Phi(f)-\Phi(h))(x,y)|=|f(\phi(x),\phi(y))-h(\phi(x),\phi(y))| \leq \|f-h\|_{\infty}.
$$
This proves the continuity of $\Phi$. A similar argument where $\phi$ is replaced by $\phi^{-1}$ proves that mapping 
{$\Phi$
is a homeomorphism.}

Let $x\in F_1$ and denote $y=\phi(x)$. Then
\ba
 \Psi^{-1}(x)&=&(\D^{-1}_2\circ \Phi^{-1} \circ \D_1)(x)=\D^{-1}_2(D^1_x(\phi^{-1}(\cdot)\times \phi^{-1}(\cdot)))
\\ & \stackrel{\eqref{eq:D1=D2}}{=}&\D_2^{-1}(D^2_y)=y.
\ea
\end{proof}

\begin{remark}\label{rem: density}
\matti{As the map $\D:M\to \D(M)$, {$x\mapsto D_x$,} is a homeomorphism, we see that for a dense set
$X\subset M$ we have
\ba
\D(M)=\hbox{cl}(\D(X))=
\hbox{cl}\{D_x\ ;\ x\in X\}\subset  C(F\times F)\},
\ea
where the closure cl is taken with respect to the topology of $C(F\times F)$.
This means that the distance difference functions corresponding to $x$ in a dense set $X$
determine the distance difference functions corresponding to the points in the whole set $M$.}
\end{remark}

\subsection{Manifolds $N_1$ and $N_2$ are diffeomorphic.}
\label{smooth structure}
Our next goal is to construct such smooth atlases for manifolds $N_i$ that homeomorphism $\Psi:N_2 \rightarrow N_1$ of Theorem \ref{Homeo of m1 and m2} is a diffeomorphism. 

%

\begin{Le}
\label{Le:smooth_coord}
Let $(N,g)$ be a compact Riemannian manifold of dimension $n$, $x\in N$ and $\xi \in T_x N$, $\|\xi\|_g=1$. Let $\gamma_{x,\xi}:[0,\ell]\rightarrow N$ be a distance minimizing geodesic. Let $0<h<\ell$,  $z=\gamma_{x,\xi}(h)$. Then there exists a neighborhood $V$ of $z$ such that the set  
\begin{equation}
\label{eq:def_U}
U=\{(z_i)_{i=1}^n \in V^n: \dim \hbox{span}((F(z_i)-\xi)_{i=1}^n)=n\} 
\end{equation}
is open and dense in $V^n:= V\times V \times \ldots \times V$. Here $F(q):=\frac{(\exp_x)^{-1}(q)}{\|(\exp_x)^{-1}(q)\|_g}, \: q \in V$. 

Moreover for every $(z_i)_{i=1}^n \in U$ there exists an open neighborhood $W$ of $x$ such that 
\begin{equation}
\label{eq:coordinate_map}
H:W\rightarrow \R^n, \: H(y)=(d(y,z_i)-d(y,z))_{i=1}^n
\end{equation}
is a smooth coordinate mapping. 
\label{Smooth coordiante system}
\begin{figure}[h]
 \begin{picture}(150,180)
  \put(-60,-10){\includegraphics[width=300pt]{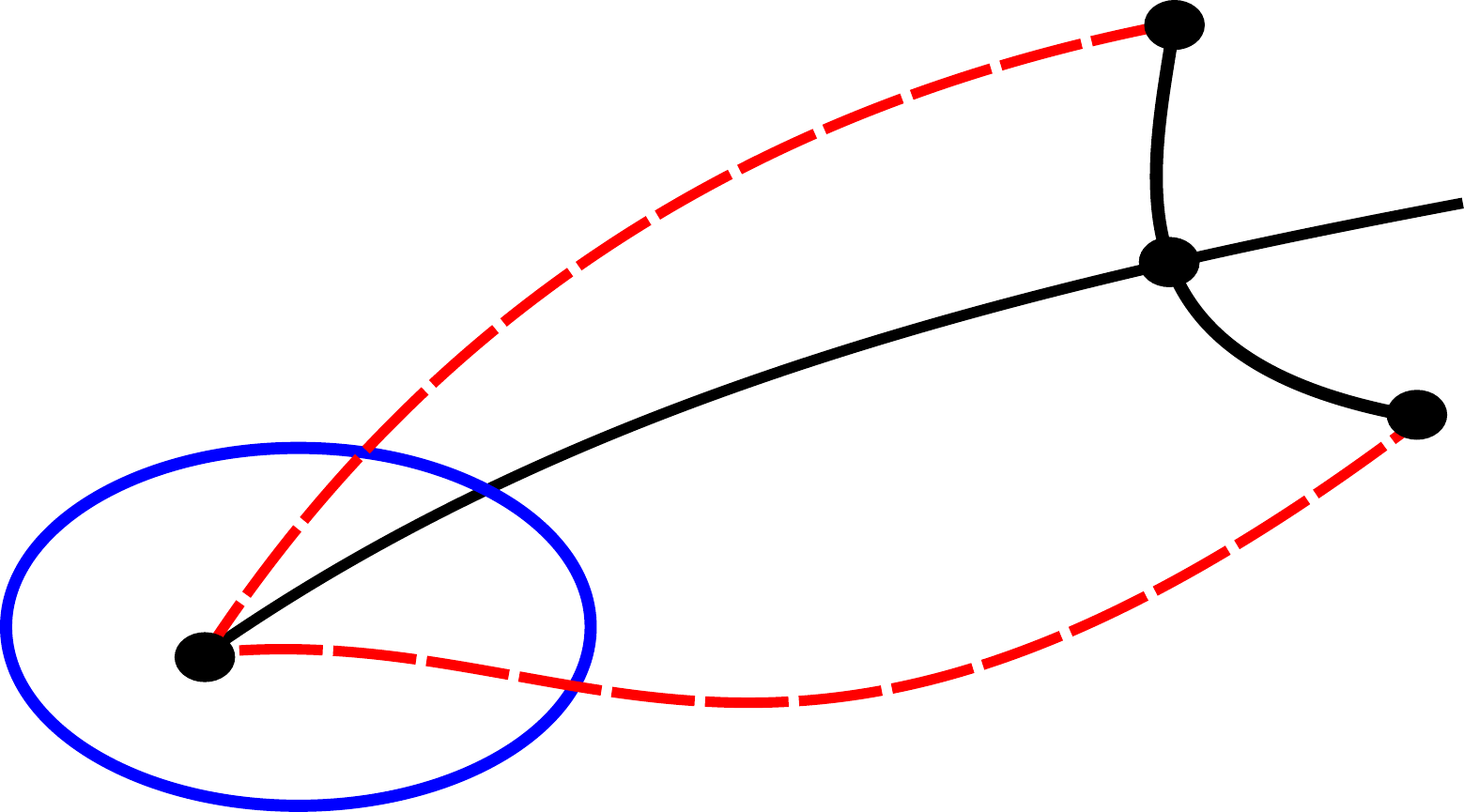}}
  \put(-25,30){$x$}
  \put(135,80){$\gamma_{x,\xi}$}
  \put(185,110){$z$}
  \put(195,150){$z_1$}
  \put(240,75){$z_2$}
  \put(20,5){$W$}
   \end{picture}
   \caption{\it A schematic picture of the coordinate system $H$.}
\end{figure}
\end{Le}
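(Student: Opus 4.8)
The plan is to produce the coordinates in two stages: first understand the geometry near the interior point $z=\gamma_{x,\xi}(h)$, then differentiate the distance functions at $x$. The key observation is that since $\gamma_{x,\xi}|_{[0,\ell]}$ is distance minimizing and $h<\ell$, the point $z$ is strictly inside the cut locus of $x$, hence there is a neighborhood $V$ of $z$ such that $\exp_x$ is a diffeomorphism from a neighborhood of $h\xi$ onto $V$; moreover for every $q\in V$ there is a unique minimizing geodesic from $x$ to $q$, and $q\mapsto d(x,q)$ is smooth on $V$ with gradient $\nabla_q d(x,q)$ equal to the unit terminal velocity of that geodesic. Consequently $F(q)=(\exp_x)^{-1}(q)/\|(\exp_x)^{-1}(q)\|_g$ is a smooth $T_xN$-valued map on $V$, and $F(q)$ records (after parallel transport along the radial geodesic, i.e. as an initial direction at $x$) the direction in which $d(\cdot,q)$ ``looks'' from $x$. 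Note $F(z)=\xi$.

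Next I would prove that $U$ is open and dense in $V^n$. Openness is immediate: $(z_1,\dots,z_n)\mapsto \dim\,\mathrm{span}((F(z_i)-\xi)_{i=1}^n)$ is lower semicontinuous because the determinant of any $n\times n$ minor of the matrix with columns $F(z_i)-\xi$ (in some fixed basis of $T_xN$) is continuous, and the span has dimension $n$ iff some such determinant is nonzero. For density, I would argue that $F:V\to T_xN$ is a submersion onto a neighborhood of $\xi$ (indeed $F = \pi\circ(\exp_x)^{-1}$ where $\pi$ is radial projection to the unit sphere, and $(\exp_x)^{-1}$ is a diffeomorphism onto its image, so $F$ is a submersion away from where it is constant — and near $z$, $\exp_x^{-1}(z)=h\xi\neq 0$ so $\pi$ is a submersion there). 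Hence $F(V)$ contains a relatively open neighborhood of $\xi$ in the unit sphere $S_xN$, in particular an $(n-1)$-dimensional piece; shrinking $V$ we may assume $F$ is an open map onto such a neighborhood. Then given any $(z_i)\in V^n$ and any $\varepsilon$, we can perturb the $z_i$ one at a time: having chosen $z_1,\dots,z_k$ spanning a $k$-dimensional (affine-through-$\xi$) subspace with $k<n$, the set of $q\in V$ with $F(q)-\xi$ in that subspace is the $F$-preimage of a proper submanifold, which is nowhere dense in $V$ (again because $F$ is an open submersion-type map locally); so a small perturbation of $z_{k+1}$ increases the dimension. Iterating reaches dimension $n$, proving density.

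Finally, fix $(z_i)_{i=1}^n\in U$. I would show $H(y)=(d(y,z_i)-d(y,z))_{i=1}^n$ is a smooth coordinate map near $x$. Smoothness: each $z_i\in V$ lies strictly inside the cut locus of $x$, so by continuity of $\tau$ there is a neighborhood $W$ of $x$ on which $y\mapsto d(y,z_i)$ and $y\mapsto d(y,z)$ are all smooth (for $y\in W$, $z_i$ is not a cut point of $y$). To check $H$ is a local diffeomorphism at $x$, compute the differential: $\nabla_y d(y,z_i)\big|_{y=x}$ is the unit initial velocity at $x$ of the minimizing geodesic from $x$ to $z_i$; since $z_i\in V$ and $\exp_x$ is a diffeomorphism near the relevant covectors, this equals $F(z_i)\in S_xN$ (the initial direction of the radial geodesic hitting $z_i$), and similarly $\nabla_y d(y,z)\big|_{y=x}=F(z)=\xi$. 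Hence $dH_x$ has rows $F(z_i)-\xi$, $i=1,\dots,n$, and by the defining condition of $U$ these span $T_x^*N$, so $dH_x$ is invertible and $H$ is a diffeomorphism on a possibly smaller neighborhood $W$ of $x$.

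\textbf{Main obstacle.} The delicate point is the density argument for $U$: one must be careful that $F$ really is open near $z$ (this uses $h>0$, i.e. $z\neq x$, so that radial projection to the sphere is a submersion) and that the ``bad'' configurations form a nowhere-dense set, handled by the one-variable-at-a-time perturbation. The smoothness of the distance functions on a common neighborhood $W$ of $x$ is the other place requiring care, but it follows cleanly from $z,z_1,\dots,z_n$ all lying strictly inside $\mathrm{cut}(x)$ together with continuity of the cut distance function $\tau$.
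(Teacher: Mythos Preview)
Your proof is correct and follows the same overall architecture as the paper's: establish that $\exp_x$ is a diffeomorphism near $h\xi$ so that $F$ is well defined, prove $U$ open via continuity of a determinant, prove $U$ dense, and then invoke the inverse function theorem for $H$ using that the gradients at $x$ are (up to sign) the vectors $F(z_i)-\xi$. One small slip: $\nabla_y d(y,z_i)\big|_{y=x}=-F(z_i)$, not $+F(z_i)$, since the distance to $z_i$ decreases in the direction of the geodesic toward $z_i$; the paper accordingly writes $-\nabla\big(d(\cdot,z_i)-d(\cdot,z)\big)\big|_x=F(z_i)-\xi$. This sign is harmless for the invertibility of $dH_x$.

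The genuine methodological difference is in the density step. You argue by an elementary one-coordinate-at-a-time perturbation: for a $k$-dimensional span $L$ with $k<n$, the bad set $\{q\in V:F(q)-\xi\in L\}=F^{-1}\big((\xi+L)\cap S_xN\big)$ is nowhere dense because $F$ is a submersion and $(\xi+L)\cap S_xN$ is a proper closed subset of $S_xN$. The paper instead pushes everything to $(S_xN)^n$ via $F$, observes that $T(v_1,\dots,v_n)=\det(v_1-\xi,\dots,v_n-\xi)$ is a real-analytic (polynomial) function there, exhibits one explicit point where $T\neq 0$ (taking $\xi=e_n$, $v_i=e_i$ for $i<n$, $v_n=(e_1+e_2)/\sqrt 2$), and concludes from the identity theorem for real-analytic functions that $\{T\neq 0\}$ is open and dense; density of $U$ then follows from $F$ being open. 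Your route avoids invoking real analyticity and is more hands-on; the paper's route is shorter once one is willing to cite the analytic identity principle and produce the explicit nondegenerate configuration. Both rely on $F$ being open, which you justify more explicitly than the paper does. Your description of the bad set as ``the $F$-preimage of a proper submanifold'' is slightly loose---$(\xi+L)\cap S_xN$ can be singular---but it is in any case a proper closed (indeed algebraic) subset of $S_xN$, which is what you actually need.
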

\begin{proof}

Since the geodesic $\gamma_{x,\xi}([0,\ell])$ is distance minimizing, the geodesic segment $\gamma_{x,\xi}([0,h])$ from $x$ to $z$ has no cut points.
Moreover,  there exist neighborhoods $V_x$ and $V$ of $x$ and $z$ such that the mapping $(p,q)\mapsto d(p,q)$ is smooth on $V_x\times V$.
As the geodesic $\gamma_{x,\xi}([0,h])$  has no cut points, 
the differential of $\exp_x$ at $v:=h\xi \in T_xN$ is invertible. In particularly the map $F:V \to S_xN$ is well defined and smooth. 

Now we study the properties of the set $U$, given in \eqref{eq:def_U}. Consider the function 
$$
T:(S_xN)^n \to \R, \: T((v_i)_{i=1}^n)=\det(v_1-\xi, \ldots, v_n-\xi). 
$$
Then it holds that $(z_i)_{i=1}^n \in U$ if and only if $T((F(z_i))_{i=1}^n)\neq 0$. Therefore the set $U$ is open.  

We define a set 
$$
O:=T^{-1}(\R \setminus \{0\}) \subset (S_xN)^n.
$$ 
Then $O$ is open. Our aim is to prove that the set $O$ is also dense.  We note that $(S_xN)^n$ is a real analytic manifold and the map $T$ is real analytic since, it is a polynomial. Also the constant map $0=:(v_i)_{i=1}^n\mapsto 0$ is real analytic. By Lemma 4.3 of \cite{helga} the functions $T$ and $0$ coincide in $(S_xN)^n$ if and only if they coincide in some open subset of $(S_xN)^n$. Thus to prove that $O$ is dense, it suffices to prove that there exists $(v_i)_{i=1}^n \in (S_xN)^n$ such that $T((v_i)_{i=1}^n \neq 0$.

To simplify the notations we assume $S_xN = S^{n-1} \subset \R^n$ and $\xi=e_n$, where $e_1, \ldots, e_n$ is the standard orthonormal basis of $\R^n$. Denote $v_i =e_{i}, i \in \{1,\ldots ,n-1\}$ and $v_n = \frac{v_1+v_2}{\sqrt{2}}$. Then $T((v_i)_{i=1}^n)\neq 0$, since
$$
\hbox{span}(v_1-\xi, \ldots, v_{n-1}-\xi,\frac{v_1+v_2}{\sqrt{2}}-\xi)=\hbox{span}(e_1, \ldots, e_{n-1},e_n)=\R^n.
$$
We conclude that the set $U$ is dense in $V^n$, since $O \subset (S_xN)^n$ is dense and $F$ is an open map.

\medskip

Finally we will show that the mapping $H$, defined in \eqref{eq:coordinate_map}, is a smooth coordinate map at some neighborhood $W$ of $x$. Choose $(z_j)_{j=1}^n \in U$. By the preparations made above, it holds that the gradients 
$$
-\nabla \big(d(\cdot,z_i)-d(\cdot, z)\big)\big|_{x}=F(z_i)-\xi
$$ 
are linearly independent. Then due to the Inverse function theorem,  there exists such a neighborhood $W$ of $x$ that the function 
$$
H:W\rightarrow \R^n, \: H(y)=(d(y,z_i)-d(y,z))_{i=1}^n
$$
is a smooth coordinate mapping. 
\end{proof}
Next we consider the homeomorphism $\Psi:N_2 \rightarrow N_1$ of Theorem \ref{Homeo of m1 and m2}.

\begin{Th}
Suppose that Riemannian manifolds $(N_1,g_1)$ and $(N_2,g_2)$ are as in section \ref{The problem setting of this paper} and Proposition \ref{bigger data} is valid. Then mapping $\Psi:N_2 \rightarrow N_1$, given in formula \eqref{capital Psi},  is a diffeomorphism. 
\label{Diffeo of m1 and m2}
\end{Th}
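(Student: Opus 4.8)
The plan is to build, for every pair of corresponding points $y_0\in N_2$ and $x_0=\Psi(y_0)\in N_1$, a distance difference coordinate chart of $N_1$ near $x_0$ and one of $N_2$ near $y_0$ that are manufactured from the \emph{same} finite set of landmark points of $F^{int}_1$ (respectively, their $\phi$-images in $F^{int}_2$), and in which $\Psi$ is literally the identity map $\R^n\to\R^n$. Once this is done $\Psi$ is a local diffeomorphism, and since it is already a homeomorphism by Theorem \ref{Homeo of m1 and m2}, it is a diffeomorphism.

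The identity that makes the two charts agree comes for free from the construction of $\Psi$. By \eqref{capital Psi} and Theorem \ref{Homeo of m1 and m2}, $\D_1(\Psi(y))=\Phi(\D_2(y))=D^2_y\circ(\phi\times\phi)$ for all $y\in N_2$, i.e. $d_1(\Psi(y),z_1)-d_1(\Psi(y),z_2)=d_2(y,\phi(z_1))-d_2(y,\phi(z_2))$ for all $z_1,z_2\in F_1$. Hence, given points $z,z_1,\dots,z_n\in F_1$, the functions
\[
H_1(x)=\big(d_1(x,z_i)-d_1(x,z)\big)_{i=1}^n,\qquad H_2(y)=\big(d_2(y,\phi(z_i))-d_2(y,\phi(z))\big)_{i=1}^n
\]
satisfy $H_1\circ\Psi=H_2$ as maps $N_2\to\R^n$. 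So it remains to choose $z,z_1,\dots,z_n\in F^{int}_1$ so that $H_1$ is a smooth coordinate map of $N_1$ near $x_0$ and $H_2$ is a smooth coordinate map of $N_2$ near $y_0$; then on a neighborhood of $y_0$ one has $\Psi=H_1^{-1}\circ H_2$ and near $x_0$ one has $\Psi^{-1}=H_2^{-1}\circ H_1$, both smooth between the given differentiable structures, and we are done.

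First I would pick the single point $z$. Using Lemma \ref{Le:smooth_coord} is the mechanism, but the choice must be made simultaneously for the two manifolds. Take $z\in F^{int}_1$ with $z\neq x_0$ and $\phi(z)\neq y_0$, chosen so that $z$ does not lie in the cut locus of $x_0$ in $N_1$ and $\phi(z)$ does not lie in the cut locus of $y_0$ in $N_2$; such a $z$ exists because cut loci have measure zero, $\phi$ is a diffeomorphism of manifolds with boundary (so it maps $F^{int}_1$ onto $F^{int}_2$ and pulls null sets back to null sets), and $F^{int}_1$ is a nonempty open set. Then the minimizing geodesic from $x_0$ to $z$ extends past $z$ as a minimizing geodesic, so Lemma \ref{Le:smooth_coord} applies at $x_0$ along it and produces an open neighborhood $V_1\subset F^{int}_1$ of $z$ and an open dense set $U_1\subset V_1^n$; likewise Lemma \ref{Le:smooth_coord} applies at $y_0$ along the extended minimizing geodesic from $y_0$ to $\phi(z)$ and produces an open $V_2\subset F^{int}_2$ around $\phi(z)$ and an open dense $U_2\subset V_2^n$. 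On the open neighborhood $V:=V_1\cap\phi^{-1}(V_2)$ of $z$, both $U_1\cap V^n$ and $\{(z_i)_{i=1}^n:(\phi(z_i))_{i=1}^n\in U_2\}\cap V^n$ are open and dense in $V^n$, which is a Baire space, so their intersection is nonempty; fix a tuple $(z_1,\dots,z_n)$ in it. For this configuration Lemma \ref{Le:smooth_coord} gives that $H_1$ is a smooth chart of $N_1$ near $x_0$, and since $(\phi(z_i))_{i=1}^n\in U_2$ and $\phi(z)$ is an interior point of a minimizing geodesic from $y_0$, that $H_2$ is a smooth chart of $N_2$ near $y_0$. Combined with the second paragraph, this proves the theorem.

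The hard part will be precisely this simultaneous genericity: a single landmark configuration has to be non-degenerate for the two manifolds $N_1$ and $N_2$ at once, even though the relevant geodesics live in different manifolds and are not a priori related beyond $\Psi$. It decomposes into an ``interior point of a minimizing geodesic'' condition on the single point $z$ — which I would handle by a measure-zero cut-locus argument transported across the isometry $\phi$ — and a linear independence (``span'') condition on the tuple $(z_i)$ — which I would handle by intersecting two open dense subsets of $V^n$. A minor technical point to check is that the ``smooth coordinate mapping'' produced by Lemma \ref{Le:smooth_coord} is a chart for the given differentiable structure of $N_j$, so that $H_1^{-1}\circ H_2$ is genuinely smooth as a map between the manifolds; this is immediate from the statement of that lemma.
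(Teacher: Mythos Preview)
Your proof is correct and follows essentially the same strategy as the paper: build simultaneous distance-difference charts at $x_0$ and $y_0$ via Lemma \ref{Le:smooth_coord}, use the identity $H_1\circ\Psi=H_2$, and pick the landmark tuple $(z_i)$ by intersecting two open dense subsets of $V^n$.

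The one noteworthy tactical difference is in how you secure the base point $z$. You find $z\in F_1^{int}$ avoiding both cut loci at once by a measure-zero argument transported through $\phi$. The paper instead chooses $z$ on a minimizing $N_2$-geodesic from $x$ to some $y\in F_2^{int}$ (so $z$ is automatically not a cut point of $x$ in $N_2$), and then \emph{deduces from the data itself} that $\Psi(z)$ lies strictly inside a minimizing $N_1$-geodesic from $\Psi(x)$: from $D^2_x(y,z)=D^1_{\Psi(x)}(\Psi(y),\Psi(z))$, $d_2(z,y)=d_1(\Psi(z),\Psi(y))$, and $d_2(x,y)=d_2(x,z)+d_2(z,y)$, the triangle inequality forces $d_1(\Psi(x),\Psi(y))=d_1(\Psi(x),\Psi(z))+d_1(\Psi(z),\Psi(y))$. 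Your route is slightly more elementary and modular (it would work for any homeomorphism $\Psi$ extending $\phi^{-1}$, without using the distance-difference identity at this step), while the paper's route stays intrinsic to the problem and avoids importing the fact that cut loci are null sets.
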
 
\begin{proof}
Note that for any $p\in N_2$ and all $q,r \in F_2$ it holds that
\begin{equation}
\label{eq:DDD_maps_and_Psi}
D_p^2(q,r)=D_{\Psi(p)}^1(\Psi(q),\Psi(r)).
\end{equation}
Let $x\in N_2, \: y \in F_2^{int}$ and denote $\widetilde{x}=\Psi(x)$ and $\widetilde{y}=\Psi(y)$. Let $h\in (0,d_2(x,y))$ be such that $z:=\gamma_{x,\xi_2}(h)\in F_2^{int}$ 
and $\gamma_{x,\xi_2}([h,d_2(x,y)])\subset F^{int}_2$, where $\gamma_{x,\xi_2}$ is a minimizing unit speed geodesic from $x$ to $y$ and $\widetilde{z}=\Psi(z )\in F_1^{int}$. Note that by the choice of $z$ it holds that it is not a cut point of $x$ on curve $\gamma_{x,\xi_2}$. Therefore mapping $p\mapsto D_{p}^2(r,q)$ is $C^\infty$-smooth, when $p$ is sufficiently close to $x$ and $r,q$ are sufficiently  close to $z$. 
Since 
$$
D^2_x(y,z)=D^1_{\widetilde{x}}(\widetilde{y},\widetilde{z}), \: d_2(z,y)\geq d_1(\widetilde z, \widetilde y) \hbox{ and } d_2(x,y)=d_2(x,z)+d_2(z,y),
$$
we deduce using the triangle inequality that
$$
d_1(\widetilde{x},\widetilde{y})=d_1(\widetilde{x},\widetilde{z})+{d_1}(\widetilde{z},\widetilde{y}).
$$ 
Therefore, 
there exists a unit speed distance minimizing geodesic $\gamma_{\widetilde{x},\xi_1}$ from $\widetilde{x}$ to $\widetilde{y}$ that contains the point $\widetilde{z}$. Hence, the mapping $\widetilde{p}\mapsto D_{\widetilde{p}}^1(\widetilde{r},\widetilde{q})$ is smooth, 
when
$\widetilde{p}$ is  sufficiently close to $\widetilde{x}$ and $\widetilde{r},\widetilde{q}$ are  sufficiently close to $\widetilde{z}$. 

%

Choose a neighborhood $V_2$ of $z$ such that the map $F_2:V_2 \to S_xN_2, F_2(q):=\frac{(\exp_x)^{-1}(q)}{\|(\exp_x)^{-1}(q)\|_g}, q \in V_2$ is well defined. Since $\Psi$ is homeomorphism we may assume that $\Psi(V_2)=V_1$, which  is a neighborhood of $\widetilde z$ such that the map $F_1:V_1 \to S_xN_1, F_1(q):=\frac{(\exp_x)^{-1}(q)}{\|(\exp_x)^{-1}(q)\|_g}, q \in V_1$ is well defined. 

We want to show that there exist points $(z_i)_{i=1}^n \in V_2$ for which the collections 
$$
((F_2(z_i)-\xi_2))_{i=1}^n \in T_xN_2 \hbox{ and } ((F_1(\Psi(z_i))-\xi_1))_{i=1}^n \in T_{\widetilde x}N_1 
$$
are linearly independent. Let us define
$$
U_i:=\{(z_j)_{j=1}^n \in V_i^n: \dim \hbox{span}((F_i(z_j)-\xi_i)_{j=1}^n)=n\},\: i \in \{1,2\}. 
$$
By Lemma \ref{Le:smooth_coord} the sets $U_i$ are open and dense. Since $\Psi:N_2 \to N_1$ is a homeomorphism, also the map $\Psi^n: N_2^n \to N_1^n$ defined by 
$$
\Psi^n((q_i)_{i=1}^n)=(\Psi(q_i))_{i=1}^n
$$
is a homeomorphism. Therefore $U_1 \cap \Psi^n(U_2)$ is open and dense in $V_1^n$. Due to the choice of vector $\xi_1 \in S_{\widetilde x}N_1$, there exist points $(z_i)_{i=i}^n \in U_2$ that satisfy $(\Psi(z_1))_{i=1}^n \in U_1$. 

By Lemma \ref{Le:smooth_coord} there exists a neighborhood $W_2$ of $z$ such that the map
$$
H:W_2\rightarrow \R^n, \: H(y)=(d_2(y,z_i)-d_2(y,z))_{i=1}^n
$$
is a smooth coordinate map, $W_1:=\Psi(W_2)$ is a neighborhood of $\widetilde x$ and moreover the map
$$
\widetilde H:W_1\rightarrow \R^n, \: \widetilde H(y)=(d_1(y,\Psi(z_i))-d_2(y,\widetilde z))_{i=1}^n
$$
is also a smooth coordinate map. We conclude that by equation \eqref{eq:DDD_maps_and_Psi} we have shown $H(W_2)=\widetilde H(W_1)$ and more importantly
$$
\widetilde{H}\circ \Psi \circ H^{-1}=Id
$$
Since the point $x\in N_2$ above is arbitrary and  $H$ and $\widetilde{H}$ are smooth coordinate mappings for $x$ and $\widetilde{x}$, respectively, the above implies that $\Psi$ is a diffeomorphism.
\end{proof}
\subsection{Riemannian metrics $g_1$ and $\Psi_{\ast}g_2$ coincide in $N_1$.}
In this section we will show that manifolds $(N_1,g_1)$ and $(N_2,g_2)$ that satisfy \eqref{Data1a}-\eqref{Data1b} are isometric. 
\begin{Df} Let  $z_1\in F$ and $\xi \in S_{z_1}N$. Define a set $\altomega(z_1, \xi)$ by
\beq\nonumber
\altomega_N(z_1, \xi):=\{x\in N&;& D_x(\cdot,z_1) \textrm{ is  $C^1$-smooth in a neighborhood }
\\ & & {\textrm{of $z_1$ and }} \nabla D_x(\cdot,z_1)|_{z_1}=\xi \}.
\label{Qeodesic as a point set 1}
\eeq
\end{Df}

\begin{Le}\label{altomega lemma}
Let  $z_1\in F$ and $\xi \in S_{z_1}N$.
Then it follows
\beq
\altomega_N(z_1,\xi)=\gamma_{z_1,-\xi}(\{s\ ; \ 0<s< \tau(z_1,-\xi)\}),
\label{Qeodesic as a point set 2}
\eeq
\label{Images of geodesics}
\end{Le}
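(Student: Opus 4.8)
The plan is to reduce the assertion to two classical facts about the Riemannian distance function. Since $D_x(z,z_1)=d(x,z)-d(x,z_1)$ and the subtracted term is constant in $z$, the function $D_x(\cdot,z_1)$ is $C^1$ in a neighbourhood of $z_1$ if and only if $d(x,\cdot)$ is, and in that case $\nabla D_x(\cdot,z_1)|_{z_1}=\nabla_z d(x,z)|_{z=z_1}$. The facts I would invoke are: \emph{(a)} for $x\neq y$ the map $d(x,\cdot)$ is smooth near $y$ if and only if $y\notin\mathrm{Cut}(x)$ (equivalently, there is a unique minimizing geodesic from $x$ to $y$ and $y$ is not conjugate to $x$ along it), and then $\nabla_z d(x,z)|_{z=y}=\dot\gamma(d(x,y))$ for the unit-speed minimizing geodesic $\gamma$ with $\gamma(0)=x$, i.e.\ the unit vector at $y$ pointing \emph{away} from $x$; and \emph{(b)} symmetry of the cut locus, $y\in\mathrm{Cut}(x)\Leftrightarrow x\in\mathrm{Cut}(y)$, which in the notation of \eqref{dist to cut point} says that whenever $x=\gamma_{y,v}(s)$ with $\gamma_{y,v}|_{[0,s]}$ minimizing one has $x\notin\mathrm{Cut}(y)\Leftrightarrow s<\tau(y,v)$.

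For the inclusion $\supseteq$ in \eqref{Qeodesic as a point set 2} I would fix $0<s<\tau(z_1,-\xi)$ and set $x:=\gamma_{z_1,-\xi}(s)$. By definition of $\tau$ the segment $\gamma_{z_1,-\xi}|_{[0,s]}$ is minimizing, so its time-reversal $c(t):=\gamma_{z_1,-\xi}(s-t)$ is a unit-speed minimizing geodesic from $x$ to $z_1$ with $\dot c(s)=-\dot\gamma_{z_1,-\xi}(0)=\xi$. Since $s<\tau(z_1,-\xi)$ gives $x\notin\mathrm{Cut}(z_1)$, fact (b) yields $z_1\notin\mathrm{Cut}(x)$; hence $c$ is the unique minimizing geodesic from $x$ to $z_1$ and, by (a), $d(x,\cdot)$ is smooth near $z_1$ with $\nabla_z d(x,z)|_{z=z_1}=\dot c(s)=\xi$. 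Combined with the first paragraph this gives $x\in\altomega_N(z_1,\xi)$.

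For the reverse inclusion I would take $x\in\altomega_N(z_1,\xi)$. Then $x\neq z_1$, since $D_{z_1}(\cdot,z_1)=d(z_1,\cdot)$ is not differentiable at $z_1$; moreover $d(x,\cdot)$ is smooth near $z_1$, so $z_1\notin\mathrm{Cut}(x)$. Let $\eta\in S_xN$ be the initial direction of the unique minimizing geodesic from $x$ to $z_1$ and $r:=d(x,z_1)$, so $\gamma_{x,\eta}(r)=z_1$ and, by (a), $\xi=\nabla_z d(x,z)|_{z=z_1}=\dot\gamma_{x,\eta}(r)$. Reversing $\gamma_{x,\eta}$ shows $x=\gamma_{z_1,-\xi}(r)$ and that $\gamma_{z_1,-\xi}|_{[0,r]}$ is minimizing; by (b), $z_1\notin\mathrm{Cut}(x)$ forces $x\notin\mathrm{Cut}(z_1)$, whence $r<\tau(z_1,-\xi)$, while $r>0$ because $x\neq z_1$. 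Thus $x$ lies in the right-hand side of \eqref{Qeodesic as a point set 2}, completing both inclusions.

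All of this is routine once facts (a) and (b) are in hand, so there is no real ``hard part''; the only points requiring attention are the sign convention in the gradient formula of (a) (the gradient of $d(x,\cdot)$ points away from $x$, i.e.\ in the travel direction of the minimizing geodesic, so it pairs with $-\xi$ as the initial direction at $z_1$), the correct use of cut-locus symmetry to translate between the radius $\tau(z_1,-\xi)$ and the singular set of $d(x,\cdot)$, and a check of the two borderline cases $x=z_1$ and $s=\tau(z_1,-\xi)$, which are exactly the ones excluded on the two sides of the claimed identity.
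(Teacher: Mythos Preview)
Your proposal is correct and follows essentially the same approach as the paper: both prove the two inclusions separately by invoking that $d(x,\cdot)$ is smooth near $y\neq x$ precisely when $y\notin\mathrm{Cut}(x)$ (together with the symmetry $y\in\mathrm{Cut}(x)\Leftrightarrow x\in\mathrm{Cut}(y)$), and then identify the gradient of the distance as the outgoing unit tangent of the unique minimizing geodesic. Your write-up is in fact slightly more careful than the paper's in that you explicitly justify $x\neq z_1$, spell out the role of cut-locus symmetry in obtaining the strict inequality $r<\tau(z_1,-\xi)$, and track the sign convention for the gradient; the paper compresses these points into a couple of lines.
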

Roughly speaking, Lemma \ref{altomega lemma} means that sets $\altomega_N(z_1,\xi)$, that can be determined using data \eqref{Data}, are unparameterized geodesics on $N$.

\begin{proof} First we recall that for all $x\in N$ the distance function $d(\cdot,x)$ is not smooth near $y\in N\setminus \{x\}$ if and only if point $y$ is in a cut locus of $x$. See for instance Section 9 of Chapter 5 of \cite{Pe}.


First, consider the case when $x\in \altomega_N(z_1, \xi)$. Then by the definition of $\altomega_N(z_1,\xi)$ the distance function $d(\cdot,x)$ is $C^\infty$-smooth in a neighbourhood $z_1$ so that $z_1$ is not in a cut locus of $x$, or equivalently,  $x$ is not in a cut locus of $z_1$. 
Also, have that $x\not =z_1$. Hence, there exists an unique distance minimizing unit speed geodesic from $x$ to $z_1$. Since this geodesic has the velocity 
{$$\nabla d(\cdot,x)|_{z_1}=\nabla D_x(\cdot,z_2)|_{z_1}=\xi$$} at $z_1$, it follows that $x\in \gamma_{z_1,-\xi}(\{s\ ; \ 0< s< \tau(z_1,-\xi)\})$.

Second, consider the case when $x \in \gamma_{z_1,-\xi}(\{s\ ; \ 0<s< \tau(z_1,-\xi)\})$. Then function $D_x(\cdot, z_1)$ is smooth near $z_1$ and 
$$\nabla D_x(\cdot,z_1)|_{z_1}=\dot{\gamma}(d(x,z_1))=-\dot{\gamma}_{z_1,-\xi}(0)= \xi.$$
Thus,  $x\in \altomega_N(z_1, \xi)$.
\end{proof}
The Lemma \ref{Images of geodesics} will be the key element to prove that the mapping $\Psi$ is an isometry.
\begin{Df}
Let $N$ be a smooth manifold and let   $g$ and $\widetilde{g}$ be 
metric tensors on $N$. We say that metric tensors $g$ and $\widetilde{g}$ are geodesically equivalent, if for all geodesics $\gamma:I_1 \rightarrow N $ of metric $g$ and $\widetilde{\gamma}:\widetilde{I}_1 \rightarrow N$ of metric $\widetilde{g}$ there exist changes of parameters $\alpha:I_2 \rightarrow I_1$ and $\widetilde{\alpha}: \widetilde{I_2} \rightarrow \widetilde{I_1}$ such that
$$
\gamma \circ \alpha \textrm{ is a geodesic of metric } \widetilde{g}
$$ 
and
$$
\widetilde{\gamma} \circ \widetilde{\alpha} \textrm{ is a geodesic of metric } g.
$$
\end{Df}
A trivial example of two geodesically equivalent Riemannian metrics are $g$ and $cg$, where $c>0$ is a constant. Other,
 more interesting examples of  geodesically equivalent Riemannian metrics  are 
\begin{enumerate}
\item The Southern hemisphere of the  sphere $S^2$ and the plane $\R^2$ and that are
mapped to each other in a gnomonic projection, i.e. the great circles are mapped to straight line.
\item Unit disc in $\R^2$ and the Beltrami-Klein model of a hyperbolic plane.
\end{enumerate}

Our first goal is to show that when the distance difference data on $N_1$ and $N_2$ satisfy \eqref{Data1a}-\eqref{Data1b}, 
we have  
that metric tensors $g_1$ and $(\Psi^{-1})^{\ast}g_2$ are geodesically equivalent. 
By Lemma \ref{Images of geodesics} we know all the geodesics of $N_1$ that exit unknown region $M_1$, as point sets. Next we will show that this information is enough to deduce the geodesic equivalence of $g_1$ and $(\Psi^{-1})^{\ast}g_2$.

Since the mapping $\Psi$ is a diffeomorphism, it holds that each geodesic of $(N_2,g_2)$ is mapped to some smooth curve of $(N_1,g_1)$. By {formula} \eqref{Data} and Lemma \ref{Images of geodesics}, it holds that sets $\altomega_N(z,\xi)\subset N_1$, with $z\in F_1$ and $\xi\in S_zN_1$, are  images of geodesics of $(N_2,g_2)$ in the mapping $\Psi$.
Note that the geodesic segments $\altomega_N(z,\xi)\subset N_1$   are not self-intersecting, since a cut point occurs before a geodesic stops being one-to-one. 
\medskip


Let $z\in F_2$, $\xi \in S_zN_2$ and $t_2=\tau_2(z,-\xi)$. Then  $t\mapsto \Psi(\gamma^2_{z,-\xi}(t))$, 
$t\in [0,t_2)$ is smooth and not self-intersecting curve on $N_1$. By Proposition \ref{bigger data} and Theorem \ref{Homeo of m1 and m2} we have
\begin{equation}
\label{eq:geo_segment_cut}
\Psi(\gamma^2_{z,-\xi}((0,t_2)))=\altomega_{N_1}(\Psi(z),\Psi_{\ast}\xi)=\altomega_{N_1}(\phi^{-1}(z),(\phi^{-1})_{\ast}\xi).
\end{equation}
Let $w=\phi^{-1}(z)$ and $\eta=(\phi^{-1})_{\ast}\xi$. Then by Lemma \ref{Images of geodesics} we have $\altomega_{N_1}(w,\eta)=\gamma^1_{w,-\eta}(\{s;\  0< s< t_1 \}),$ where $t_1=\tau_1(w,-\eta)$.
Furthermore, it is easy to see that there is a re-parametrization
\begin{equation}
s:(0,t_1) \rightarrow (0,t_2) \textrm{ such that } \gamma^1_{w,-\eta}(t)=\Psi(\gamma^2_{z,-\xi}(s(t))), \: t\in (0,t_1) .
\label{Psi maps geodesics to geodesics}
\end{equation}

\medskip

For $p\in N_1$, we  define a collection $\mathcal{C}(p)$ of geodesics $\gamma$ of $(N_1,g_1)$ 
and real numbers $t_0\in \R$, given by 
\ba
\nonumber 
\mathcal{C}(p)&=& \{(\gamma,t_0)\,;  \hbox{ $\gamma:(a,b)\rightarrow N_1$ is a geodesic of $(N_1,g_1)$},
\\ & &\hbox{ $\gamma(t_0)=p$, and  there are $ z\in F_1^{int},\: \xi \in S_zN_1$ }
\\& &\hbox{ such that } \gamma((a,b))=\altomega_{N_1}(z,\xi)\}.
\ea
Here $\gamma$  is given as a pair of the set $\hbox{dom}(\gamma)=(a,b)\subset \R$, $-\infty\leq a<b\leq \infty$, where
the mapping $\gamma$ is defined and the function $\gamma:\hbox{dom}(\gamma)\to N_1$. Also, $t_0\in (a,b)$.
Moreover, above $\gamma((a,b))=\altomega_{N_1}(z,\xi)$ means that the sets
$\gamma((a,b))\subset N_1$ and $\altomega_{N_1}(z,\xi)\subset N_1$ are the same, or equivalently,
that $\gamma((a,b))$ and $\altomega_{N_1}(z,\xi)$ are the same as unparameterized curves.
\medskip


%
%
For a moment we consider only metric $g_1$. 
Assume that 
$p$ is a \mtext{point in $N_1$ and $q$ is point of $F_1^{int}$ such that  $q=\gamma_{p,\xi}(\ell)$,
$\ell>0$} and the geodesic $\gamma_{p,\xi}([0,\ell])$ has no cut points. 
Then there is a neighborhood $U\subset F_1^{int}$ of $q$ and a neighborhood $V\subset T_pN_1$ of $\ell\xi$ such that $\exp_p:V\to U$ is a  diffeomorphism. Assuming that the neighborhood $V$ is small enough, we see that for any $\ell v\in V$, $\|v\|_{g_1}=1$, there is $s>0$ such that the geodesic $\gamma_{p,v}([-s,\ell])$ has no cut points. Let $s_0(p,v)\in (0,\infty]$  be the supremum of such $s$. Then, for the geodesic
$\gamma_{p,v}:(-s_0(p,v),\ell)\to N_1$ we have $(\gamma_{p,v},0)\in \mathcal{C}(p)$. 
This proves that set
\ba
\Omega_p:=\{v\in T_pN_1&;&
 \hbox{there are $(c,t_p)\in \mathcal{C}(p)$, $c(t_p)=p$ }\\
 & &\hbox{and $\dot{c}(t_p)$  is proportional to $\pm v$} \}
\ea
\mtext{contains a non-empty open double cone $\Sigma_p$, that is,
an open set that satisfies $rv\in \Sigma_p$ for all 
 $v\in \Sigma_p$ and $r\in \R\setminus \{0\}$.}
Note that the complement of $\Omega_p$ in $T_pN_1$ is non-empty,
if in manifold $M_1$ there are closed geodesics, or geodesics that are trapping in both directions in $M_1$ and go through
the point $p$.

\begin{figure}[h]
 \begin{picture}(150,150)
  \put(0,-10){\includegraphics[width=140pt]{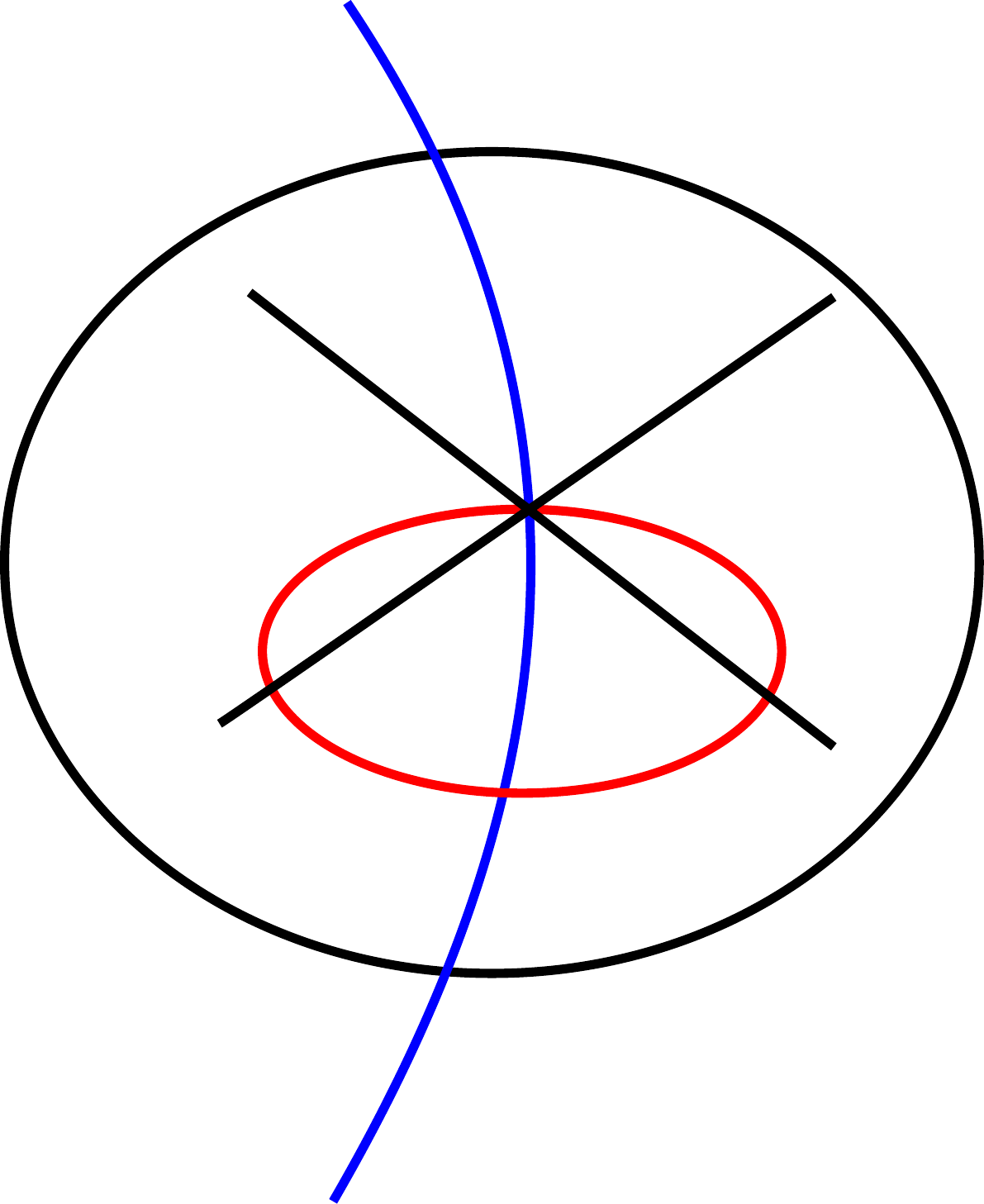}}
  \put(95,115){$\Sigma_p$}
  \put(0,30){$\p M_1$}
  \end{picture}
\caption{\it For all $p\in M_1$ there exists an open conic set $\Sigma_p\subset T_pN_1$ such that  
for every $\xi\in \Sigma_p$ the geodesic   $\gamma_{p,\xi}$ of $(N_1,g_1)$ 
can be extended to a distance minimizing geodesic (blue curve in the figure) that 
enters the set $F=N\setminus M$.  \mtext{When the distance difference data for $g_1$  and $g_2$  coincide, these geodesics have to be pre-geodesic also with respect to the metric $\Psi^{\ast}g_2$. Note that there may be  $g_1$-geodesics emanating from $p$ to directions $\xi\not \in \Sigma_p$ that do not intersect the set $F$. Such geodesics can be e.g. closed loops in $M_1$ (red curve).}}
\end{figure}


\mtext{Let point $p \in N_1$} and $(U,X)$ be coordinates near $p$,
 $X:U\to \R^n$, and denote $X(q)=(x^j(q))_{j=1}^n$. Recall 
that a pre-geodesic $\tilde \gamma$ on $(N_1,g_1)$ satisfies the formula \eqref{geodesic equation 3}, that is,
\begin{equation}
\label{two fold eq}
\bigg[\frac{d^2\tilde \gamma^k}{ds^2}(s) +\Gamma^k_{i,j}(\tilde\gamma(s))\frac{d\tilde\gamma^{i}}{ds}(s)\frac{d
\tilde\gamma^j}{ds}(s)\bigg]\bigg|_{s=s_p}=f\bigg(\frac{d\tilde\gamma}{ds}\bigg)\frac{d\tilde\gamma^k}{ds}(s)\bigg|_{s=s_p}, 
\end{equation}
$\: k\in \{1,2, \ldots ,n\}$. Here $\gamma(s_p)=p$ and $f$ is some function that is homogeneous of degree 1 on the subbundle  of $TU$ that is determined by the velocity vectorfield of $\widetilde \gamma$.

Next, we change the point of view and consider 
the  equation (\ref{two fold eq}) as a system of equations for the ``unknown'' $(\Gamma,f)$ with
the given coefficients $\frac{d\tilde \gamma}{ds}(s)|_{s=s_p}\in \Omega_p$ and $\frac{d^2\tilde \gamma}{ds^2}(s)|_{s=s_p}$ where
$(\tilde \gamma,s_p) \in \mathcal{C}(p)$.
 Here $\Gamma$ stands for a collection of Christoffel  symbols $\Gamma^k_{i,j}$ and $f:\Omega \rightarrow \R$ is a function that satisfies equation \eqref{homogeneous of degree 1} on the subbundle 
$$
\Omega :=\bigcup_{p\in U} \Omega_p \subset TU.
$$ 
 
Suppose that we also have another Riemannian connection  which Christoffel  symbols $\widetilde{\Gamma}^{k}_{i,j}$ in the $(U,X)$-coordinates have the form 
\begin{equation}
\widetilde{\Gamma}^{k}_{i,j}=\Gamma^{k}_{i,j}+\delta^k_i\varphi_j+\delta^k_j\varphi_i,
\label{Gauge freedom of connections}
\end{equation}
for some smooth functions $\varphi_i:U\to \R,$ $i=1,2,\dots,n$.
\mtext{Here, $\delta^k_i$ is one when $k=i$  and zero otherwise.}
 Let $\varphi(x)=\varphi_i(x)dx^i$ be a smooth 1-form that has  functions  $(\varphi_i)_{i=1}^n$ as the coefficients. \mtext{We need the following consequence of Lemma \ref{Equivalence of geodesic formulas}:}

\begin{Le}
Let $(U,X)$ a smooth coordinate chart. If the Christoffel symbols $\widetilde{\Gamma}$ and $\Gamma$ satisfy the equation \eqref{Gauge freedom of connections} for some 1-form $\varphi$ and pair $(f,\Gamma)$, $f: \Omega \to \R$ is homogenous of degree 1, is a solution of \eqref{geodesic equation 3}, with $s=s_p$, for 
all $(\tilde \gamma,s_p) \in \mathcal{C}(p)$, then pair $(\widetilde{\Gamma}, \widetilde{f})$ where
\begin{equation}
\widetilde{f}(v)=f(v)+2\varphi(v), \:v \in \Omega.
\label{Matveev 2nd paper tilde f}
\end{equation}
is also a solution of \eqref{geodesic equation 3}, with $s=s_p$, for all 
$(\tilde \gamma,s_p) \in \mathcal{C}(p)$.
\label{MaTo paper lemma easier direction}
\end{Le}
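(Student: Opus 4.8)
The plan is to reduce the statement to the already-established Levi--Civita lemma (Lemma \ref{Equivalence of geodesic formulas}) by a direct computation showing that the curve-dependent inhomogeneous geodesic equation \eqref{geodesic equation 3} for the connection $\Gamma$ transforms, under the projective change \eqref{Gauge freedom of connections}, into the same equation for $\widetilde\Gamma$ with $f$ replaced by $\widetilde f=f+2\varphi$. Fix a pair $(\tilde\gamma,s_p)\in\mathcal C(p)$ and write $v^k:=\frac{d\tilde\gamma^k}{ds}(s_p)$. First I would substitute \eqref{Gauge freedom of connections} into the left-hand side of \eqref{geodesic equation 3} written for $\widetilde\Gamma$: the extra terms coming from $\delta^k_i\varphi_j+\delta^k_j\varphi_i$ are
\[
\bigl(\delta^k_i\varphi_j+\delta^k_j\varphi_i\bigr)v^iv^j=2\,\varphi_j v^j\, v^k=2\varphi(v)\,v^k,
\]
using only the symmetry of $v^iv^j$ in $i,j$ and the homogeneity (linearity) of the $1$-form $\varphi$. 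Hence
\[
\frac{d^2\tilde\gamma^k}{ds^2}(s_p)+\widetilde\Gamma^k_{i,j}(\tilde\gamma(s_p))v^iv^j
=\Bigl[\frac{d^2\tilde\gamma^k}{ds^2}(s_p)+\Gamma^k_{i,j}(\tilde\gamma(s_p))v^iv^j\Bigr]+2\varphi(v)\,v^k .
\]

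Next I would invoke the hypothesis: since $(f,\Gamma)$ solves \eqref{geodesic equation 3} at $s=s_p$ for this $\tilde\gamma$, the bracketed term equals $f(v)\,v^k$. Substituting gives
\[
\frac{d^2\tilde\gamma^k}{ds^2}(s_p)+\widetilde\Gamma^k_{i,j}(\tilde\gamma(s_p))v^iv^j=\bigl(f(v)+2\varphi(v)\bigr)v^k=\widetilde f(v)\,v^k,
\]
which is exactly \eqref{geodesic equation 3} for the pair $(\widetilde f,\widetilde\Gamma)$ at $s=s_p$. It remains to check that $\widetilde f$ is admissible, i.e. homogeneous of degree $1$ on $\Omega$: this is immediate since $f$ is homogeneous of degree $1$ by assumption and $v\mapsto 2\varphi(v)$ is linear, hence homogeneous of degree $1$; the sum is therefore homogeneous of degree $1$ on each fiber $\Omega_p$, so \eqref{homogeneous of degree 1} holds for $\widetilde f$. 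Since $p\in N_1$ and $(\tilde\gamma,s_p)\in\mathcal C(p)$ were arbitrary, the pair $(\widetilde\Gamma,\widetilde f)$ solves \eqref{geodesic equation 3} for all such pairs, which is the claim.

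I do not expect a serious obstacle here; the only point requiring a little care is bookkeeping of indices (making sure the two $\delta$-terms genuinely combine into the factor $2\varphi(v)$, which uses the symmetry of the quadratic expression $v^iv^j$) and checking that the domain $\Omega=\bigcup_{p\in U}\Omega_p$ on which $\widetilde f$ must be homogeneous is the same one on which $f$ is given — but this is built into the definition of $\Omega$ and of $\mathcal C(p)$. The role of Lemma \ref{Equivalence of geodesic formulas} is only implicit: it is what guarantees that a curve satisfying \eqref{geodesic equation 3} for an admissible $f$ is a pre-geodesic of the corresponding connection, so that the conclusion has the intended geometric meaning; the computation above is what actually transfers the equation from $(\Gamma,f)$ to $(\widetilde\Gamma,\widetilde f)$.
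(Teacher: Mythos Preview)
Your proof is correct and follows essentially the same route as the paper: compute the contribution of the $\delta^k_i\varphi_j+\delta^k_j\varphi_i$ terms as $2\varphi(v)v^k$ (this is exactly the computation labelled \eqref{Gauge matrix} in the paper), then substitute into \eqref{geodesic equation 3} and use the hypothesis to identify the right-hand side as $\widetilde f(v)v^k$. Your additional remark that $\widetilde f$ inherits degree-$1$ homogeneity from $f$ and the linearity of $\varphi$ is a small but welcome clarification the paper leaves implicit; the reference to Lemma \ref{Equivalence of geodesic formulas} is, as you note, not actually used in the argument.
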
 
\begin{proof}
Let $(\tilde \gamma,s_p) \in \mathcal{C}(p)$. A direct computation shows that 
\begin{equation}
\begin{array}{c}
(\delta^k_i\varphi_j+\delta^k_j\varphi_i)\frac{d\tilde \gamma^{i}}{ds}(s)\frac{d\tilde \gamma^{j}}{ds}(s)=\varphi_j\frac{d\tilde \gamma^{k}}{ds}(s)\frac{d\tilde \gamma^{j}}{ds}(s)+\varphi_i\frac{d\tilde \gamma^{i}}{ds}(s)\frac{d\tilde \gamma^{k}}{ds}(s)
\\
=2\frac{d\tilde \gamma^{k}}{ds}(s)\bigg(\varphi_i\frac{d\tilde \gamma^{i}}{ds}(s)\bigg)=2\frac{d\tilde \gamma^{k}}{ds}(s)\varphi\bigg(\frac{d\tilde \gamma}{ds}(s)\bigg)
\end{array}
\label{Gauge matrix}
\end{equation}
Use this and substitute equation \eqref{Gauge freedom of connections} into equation \eqref{geodesic equation 3} to obtain
$$
\frac{d^2\tilde \gamma^{k}}{ds^2}(s)+\widetilde{\Gamma}^k_{i,j}(p)\frac{d\tilde \gamma^{i}}{ds}(s)\frac{d\tilde \gamma^{j}}{ds}(s)\Bigg|_{s=s_p}\hspace{-4mm}=\hspace{-1mm}\frac{d\tilde \gamma^{k}}{ds}(s)\hspace{0mm}\bigg[f\bigg(\frac{d\tilde \gamma}{ds}(s)\bigg)+2\varphi\bigg(\frac{d\tilde \gamma}{ds}(s)\bigg)\bigg] 
$$
that proves the claim.
\end{proof}

\mtext{The following lemma gives a converse result for Lemma 
\ref{MaTo paper lemma easier direction}.
It is obtained
by using,  in a quite straightforward way,  results of V.\ Matveev \cite[Sec.\ 2]{Ma} for  general affine connections 
on pseudo-Riemannian manifolds.
However, for the convenience of the reader, 
we give   a detailed proof for the lemma and analyze at the same time the smoothness of the 1-form $x\mapsto \varphi(x)$} \mmtext{in a local coordinate neighborhood $U\subset M$.}

\begin{Le} 
Let $(U,X)$ a smooth coordinate chart. Let functions $f:\Omega\to \R $ and $\widetilde{f}:\Omega\to \R $ be homogeneous of degree 1.
Suppose that pairs  $(f,\Gamma)$ and  $(\widetilde{\Gamma}, \widetilde{f})$ both solve 
at all points $p\in U$ the system \eqref{geodesic equation 3} 
for all such coefficients $\frac{d\gamma}{ds}(s)|_{s=s_p}\in \Omega_p$ and $\frac{d^2\gamma}{ds^2}(s)|_{s=s_p}$ that $(\gamma,s_p) \in \mathcal{C}(p)$. Then the Christoffel symbols $\Gamma$ and $\widetilde{\Gamma}$ satisfy equation \eqref{Gauge freedom of connections} in $U$ with 
\mtext{a $C^\infty$-smooth}  1-form $\varphi$ in $U$.
\label{Solutions of geodesic eq are gauge equivalent}
\end{Le}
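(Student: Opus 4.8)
The plan is to fix a point $p\in U$ and analyze the system \eqref{geodesic equation 3} at $p$ for the two pairs $(f,\Gamma)$ and $(\widetilde f,\widetilde\Gamma)$. Subtracting the two equations at $s=s_p$ for a pre-geodesic $(\widetilde\gamma,s_p)\in\mathcal C(p)$ with velocity $v=\frac{d\widetilde\gamma}{ds}(s_p)\in\Omega_p$, the second-derivative terms cancel (they are common to both systems), and one is left with
$$
(\widetilde\Gamma^k_{i,j}(p)-\Gamma^k_{i,j}(p))v^iv^j=\big(\widetilde f(v)-f(v)\big)v^k,\qquad k=1,\dots,n.
$$
Write $B^k_{i,j}:=\widetilde\Gamma^k_{i,j}-\Gamma^k_{i,j}$, which is symmetric in $i,j$, and $h(v):=\widetilde f(v)-f(v)$, which is homogeneous of degree $1$. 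So the content is: the quadratic form $v\mapsto B^k_{\cdot\cdot}v^iv^j$ is proportional to $v^k$ for all $v$ in the open double cone $\Sigma_p\subset\Omega_p$ produced earlier in the text. The goal is to deduce that $B^k_{i,j}=\delta^k_i\varphi_j+\delta^k_j\varphi_i$ for some covector $\varphi=\varphi(p)$, and that $p\mapsto\varphi(p)$ is smooth.

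The key algebraic step is purely pointwise linear algebra: if $B^k_{i,j}v^iv^j=h(v)v^k$ for all $v$ in a nonempty open cone, then since both sides are polynomial in $v$ (the left side is a quadratic form, so $h(v)v^k$ must agree with a quadratic polynomial on an open set, forcing $h$ to be a linear form $h(v)=2\varphi_\ell v^\ell$ by a standard analyticity/Zariski-density argument, exactly the device already used via Lemma 4.3 of \cite{helga}), we get the polynomial identity $B^k_{i,j}v^iv^j=2\varphi_\ell v^\ell v^k$ for all $v\in\R^n$. Comparing coefficients of the symmetric quadratic monomials yields $B^k_{i,j}=\delta^k_i\varphi_j+\delta^k_j\varphi_i$ (one extracts $\varphi$ by a trace: contracting $k=i$ and summing gives $B^i_{i,j}=(n+1)\varphi_j$, so $\varphi_j=\frac1{n+1}(\widetilde\Gamma^i_{i,j}-\Gamma^i_{i,j})$, and then one checks the formula holds). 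This is essentially Matveev's observation and is the step I would cite to \cite[Sec.\ 2]{Ma}.

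For smoothness of $\varphi$: the formula $\varphi_j(p)=\frac1{n+1}\big(\widetilde\Gamma^i_{i,j}(p)-\Gamma^i_{i,j}(p)\big)$ expresses $\varphi_j$ as a linear combination of Christoffel symbols of two $C^\infty$ connections in the smooth chart $(U,X)$, hence $\varphi$ is automatically a $C^\infty$ $1$-form on $U$. So once the pointwise identity is established at every $p\in U$, smoothness is immediate; there is no separate hard analytic work needed, which is why the author's remark emphasizes merely that one ``analyzes at the same time the smoothness.''

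The main obstacle, and the only place requiring care, is justifying that the identity $B^k_{i,j}(p)v^iv^j=h(v)v^k$, which a priori is known \emph{only for $v$ in the cone $\Sigma_p$} (the directions of geodesics that escape into $F_1^{int}$), forces $h$ to be linear and hence upgrades to an identity for all $v\in T_pN_1$. Here one uses that $\Sigma_p$ is a nonempty open double cone: fixing $k$, the function $v\mapsto B^k_{i,j}v^iv^j - h(v)v^k$ vanishes on $\Sigma_p$; on $\Sigma_p$ the first term is a homogeneous degree-$2$ polynomial, so $h(v)v^k$ agrees with a polynomial on an open set; choosing $k$ with $v^k\neq0$ locally shows $h$ is rational, then real-analytic, then — being degree-$1$ homogeneous and analytic near a point — a linear form, which by real-analytic continuation (Lemma 4.3 of \cite{helga}) is linear on all of $T_pN_1$. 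After that, both sides of the identity are polynomials agreeing on an open set, so they agree everywhere, and the coefficient comparison above applies. I would write this out carefully since it is the conceptual heart of the lemma; everything else is bookkeeping.
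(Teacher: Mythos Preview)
Your overall strategy is sound and is a genuinely different route from the paper's. The paper obtains linearity of $h:=\widetilde f-f$ on the open cone $\Sigma_p$ by exploiting the \emph{parallelogram identity} for the symmetric bilinear forms $\sigma^k(u,v)=B^k_{i,j}u^iv^j$: from $\sigma^k(u+v,u+v)+\sigma^k(u-v,u-v)=2\sigma^k(u,u)+2\sigma^k(v,v)$ and the relation $\sigma^k(w,w)=h(w)w^k$ one reads off $h(u+v)=h(u)+h(v)$ directly, without any algebraic-geometry style argument. Your approach, by contrast, is polynomial-algebraic: you try to argue that $h(v)v^k$ being a degree-two polynomial forces $h$ to be linear. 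Your smoothness argument via the trace formula $\varphi_j=\tfrac1{n+1}B^i_{i,j}$ is cleaner than the paper's (which uses a metric-dependent expression involving $g_{\ell\ell}$), and the coefficient comparison at the end is fine.

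However, there is a genuine gap in your linearity step. You write that $h$ is ``rational, then real-analytic, then --- being degree-$1$ homogeneous and analytic near a point --- a linear form.'' That implication is false: $h(v_1,v_2)=v_1^2/v_2$ is real-analytic on $\{v_2\neq0\}$ and homogeneous of degree $1$, yet not linear. What is missing is the use of \emph{all} indices $k$ simultaneously. From $B^k_{i,j}v^iv^j=h(v)v^k$ on the open cone you get the polynomial identity $P_k(v)v^\ell=P_\ell(v)v^k$ for all $v$ (where $P_k(v):=B^k_{i,j}v^iv^j$); taking $\ell\neq k$ and using $\gcd(v^k,v^\ell)=1$ in $\R[v^1,\dots,v^n]$ gives $v^k\mid P_k(v)$, so $P_k(v)=v^kL(v)$ for a linear form $L$, whence $h=L$ on $\Sigma_p$. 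Alternatively (and this meshes better with your trace idea), differentiate $B^k_{i,j}v^iv^j=h(v)v^k$ twice in $v$, contract $k$ with one of the derivative indices, and use Euler's relation for degree-$1$ homogeneous $h$ to obtain $\partial_mh(v)=\tfrac{2}{n+1}B^\ell_{m,\ell}$, constant on $\Sigma_p$; this gives linearity directly and simultaneously produces your formula for $\varphi_j$. Either fix is short, but one of them is required.
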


\begin{proof}
Define a pair $(\overline{f},\overline{\Gamma})$ as
$$
\overline{f}=f-\widetilde{f} \textrm{ and } \overline{\Gamma}_{i,j}^k=\Gamma_{i,j}^k-\widetilde{\Gamma}_{i,j}^k.
$$
\mtext{As a difference of two connection coefficients, $\overline\Gamma$ is a tensor.}
By substitution of pairs $(f,\Gamma)$ and  $(\widetilde{\Gamma}, \widetilde{f})$ into equation \eqref{geodesic equation 3} and by subtracting the obtained equations, we obtain at  $p\in U$ 
\begin{equation}
\overline{\Gamma}_{i,j}^kv^{i}v^j=\overline{f}(v)v^k, \textrm{ for every } v\in \Omega_p.
\label{Linear mapping chirstoffel symbols}
\end{equation}
Note that \eqref{Linear mapping chirstoffel symbols} defines a smooth extension of $\overline{f}:{\Omega}\to \R$ to $TU\setminus \{0\}$, given by 
\begin{equation}
\overline{f}(v)=\frac{\overline{f}(v)v^kg_{k\ell}v^\ell}{g(v,v)}=
\frac{\overline \Gamma^k_{i,j}(p)v^{i}v^jg_{k\ell}(p)v^\ell}{g_{ab}(p)v^av^b}, \: (p,v) \in TU\setminus \{0\}.
\label{f yläviiva jatko}
\end{equation}
Here, the rightmost term is smooth in $TU \setminus \{0\}$.
 
 Recall that $\Omega_p$ contains an open double cone $\Sigma_p\subset \Omega_p$.
\mtext{Our next goal is to show that there exist a linear function $\varphi:T_pN \rightarrow \R$ such that the restriction of function $\overline{f}$, to
 $\Sigma_p \subset \Omega_p$, 
is equal to $2\varphi|_{\Sigma_p}$}. Define a family of symmetric bi-linear mappings 
$$
\sigma^{k}:T_pN\times T_pN \rightarrow \R, \: \sigma^k(u,v)=\overline{\Gamma}_{i,j}^kv^{i}u^j, \: k \in \{1,\ldots,n\}.
$$
Since mappings $\sigma^k$ are symmetric, the parallelogram equation
$$
0=\sigma^k(u+v,u+v)+\sigma^k(u-v,u-v)-2\sigma^k(u,u)-2\sigma^k(v,v)
$$
holds.

\mtext{Next, let $u\in \Sigma_p$, $u\not =0$.} Then there is $\e=\e(u)>0$  such that, if $v\in T_pN$ satisfies $\|v\|<\e$,
then $u-v\in \Sigma_p$. 

Let us next consider $v\in \Sigma_p$  with $\|v\|<\e$. Then  $u-v,u+v \in \Sigma_p\subset \Omega_p$. By the parallelogram equality for the function $\sigma^k$ and \eqref{Linear mapping chirstoffel symbols}  we have
\begin{equation}
\begin{array}{l}
0=\overline{f}(u+v)(u+v)+\overline{f}(u-v)(u-v)-2\overline{f}(u)u-2\overline{f}(v)v
\\
\hspace{2.5mm}=\big(\overline{f}(u+v)+\overline{f}(u-v)-2\overline{f}(u)\big)u+\big(\overline{f}(u+v)-\overline{f}(u-v)-2\overline{f}(v)\big)v.
\end{array}
\label{cause of parallelogram}
\end{equation}
If vectors $u$ and $v$ are linearly independent, we get a system
\begin{equation}
\left\{\begin{array}{c}
\overline{f}(u+v)+\overline{f}(u-v)-2\overline{f}(u)=0
\\
\overline{f}(u+v)-\overline{f}(u-v)-2\overline{f}(v)=0.
\end{array}
\right.
\label{zero coefficients of linearity equation}
\end{equation}
By summing up these two equations, we get 
\begin{equation}
\overline{f}(u+v)=\overline{f}(u)+\overline{f}(v).
\label{yläviiva f}
\end{equation}
Observe that the system \eqref{zero coefficients of linearity equation} is  valid
also when $v=\lambda u$, $\lambda \in \R$.
Recall that the mappings $f$ and $\widetilde{f}$ are solutions of \eqref{geodesic equation 3} and therefore, they satisfy the equation \eqref{homogeneous of degree 1}, i.e., they commute with scalar multiplication in $\Omega_p$.

So far we have proved that $\overline{f}(u+\cdot)$ and $\overline{f}(u)+\overline{f}(\cdot)$ coincide in set $B_p(0,\epsilon)\cap \Sigma_p$. Since $\overline{f}$ is homogeneous of degree 1 it holds by \eqref{yläviiva f} that
\begin{equation}
\overline{f}(u+av)=\overline{f}(u)+a\overline{f}(v), \: v \in B_p(0,\epsilon) \cap \Sigma_p, \: -1< a< 1.
\label{f bar is linear}
\end{equation}
We define a linear function 
\beq\label{varphi def}
2\varphi:T_pN \rightarrow \R, \: 2\varphi(v)=\lim_{r \rightarrow 0}\frac{\overline{f}(u+rv)-\overline{f}(u)}{r}=\nabla_u \overline{f}(u)\cdot v.\hspace{-1cm}
\eeq
 If $v \in \Sigma_p$ and $r$ is small enough, then $rv\in B_p(0,\e)\cap \Sigma_p$ and therefore by formula \eqref{f bar is linear} it holds that 
\begin{equation}
\label{eq:phi1}
2\varphi(v)=\overline{f}(v)\quad\hbox{for every $v \in \Sigma_p$.}
\end{equation}
As $\Sigma_p$ is open, and $\varphi$ and $\overline f$ are linear,
this holds for all $v\in T_pN$ and  thus $\varphi(v)$ given by the formula 
(\ref{varphi def}) 
is independent on the
choice of used $u\in \Sigma_p$.
In local coordinates $(U,X)$ we have by \eqref{f yläviiva jatko} and \eqref{eq:phi1} that 
$$
\varphi(\frac \p {\p x^\ell}):=\frac 12 
\sum_{i,k,j=1}^n\frac 1{g_{\ell\ell}(x)}\overline \Gamma^k_{i,j}(x)\delta_\ell^{i}\delta_\ell^jg_{k\ell}(x)
$$
defines a $C^\infty$-smooth 1-form $x\mapsto \varphi(x)$  in $U$, that is an extension of $\overline{f}:\Omega \to \R$.

Define a connection
$$
\widehat{\Gamma}^k_{i,j}:= \widetilde{\Gamma}^{k}_{i,j}+\delta^k_i\varphi_j+\delta^k_j\varphi_i,
$$ 
and choose $v=\frac{d}{ds}\gamma(s)|_{s=s_p}\in \Sigma_p$. Since pairs $(f,\Gamma)$ and  $(\widetilde{\Gamma}, \widetilde{f})$ are both solutions of \eqref{geodesic equation 3} the above considerations yield
\ba
& &\bigg[\frac{d^2\gamma^{k}}{ds^2}(s)+\Gamma^k_{i,j}(p)\frac{d\gamma^{i}}{ds}(s)\frac{d\gamma^{j}}{ds}(s)\bigg]\bigg|_{s=s_p}=\bigg[f\bigg(\frac{d\gamma}{ds}(s)\bigg)\frac{d\gamma^k}{ds}(s)\bigg]\bigg|_{s=s_p}
\\
&=&\frac{d\gamma^k}{ds}(s)\bigg[2\varphi\bigg(\frac{d\gamma}{ds}(s)\bigg)+\widetilde{f}\bigg(\frac{d\gamma}{ds}(s)\bigg)\bigg]\bigg|_{s=s_p}\\
&=&\bigg[\frac{d^2\gamma^k}{ds^2}(s) +\widetilde{\Gamma}^k_{i,j}(p)\frac{d\gamma^i}{ds}(s)\frac{d\gamma^j}{ds}(s)\bigg]\bigg|_{s=s_p}+\frac{d\gamma^k}{ds}(s)\bigg[2\varphi\bigg(\frac{d\gamma}{ds}(s)\bigg)\bigg]\bigg|_{s=s_p}\\
& &\hspace{-8mm}
\stackrel{\eqref{Gauge matrix}}{=}\bigg[\frac{d^2\gamma^k}{ds^2}(s)+\widehat{\Gamma}^k_{i,j}(p)\frac{d\gamma^i}{ds}(s)\frac{d\gamma^j}{ds}(s)\bigg]\bigg|_{s=s_p}.
\ea
Therefore we have
\begin{equation}
\Gamma^k_{i,j}(p)\frac{d\gamma^{i}}{ds}(s)\frac{d\gamma^j}{ds}(s)\bigg|_{s=s_p}=\widehat{\Gamma}^k_{i,j}(p)\frac{d\gamma^{i}}{ds}(s)\frac{d\gamma^j}{ds}(s)\bigg|_{s=s_p}.
\label{Connections are same}
\end{equation}
Thus we have shown that for all $v\in \Sigma_p$ the equation 
\begin{equation}
\Gamma^k_{i,j}(p)v^{i}v^j=\widehat{\Gamma}^k_{i,j}(p)v^{i}v^j
\label{Connections are same 2}
\end{equation}
is valid. Since set $\Sigma_p$ is open, it holds that
$$
\Gamma^k_{\ell,m}(p)=\partial_{v^{\ell}v^m} \Gamma^k_{i,j}(p)v^{i}v^j=\partial_{v^{\ell}v^m}\widehat{\Gamma}^k_{i,j}(p)v^{i}v^j=\widehat{\Gamma}^k_{\ell,m}(p).
$$
\mtext{As above $p\in U$ is arbitrary,} this proves the claim.
\end{proof}
\begin{Po}
Suppose that Riemannian manifolds $(N_1,g_1)$ and $(N_2,g_2)$ are as in Section \ref{The problem setting of this paper} and \eqref{Data1a}-\eqref{Data1b} are valid. \mtext{Let $p \in N_1$} and $(U,X)$ be coordinates in a neighborhood of $p$. Then it holds that the Christoffel symbols $\Gamma$ and $\widetilde{\Gamma}$ of metrics $g_1$ and $(\Psi^{-1})^{\ast}g_2$, 
respectively, satisfy equation \eqref{Gauge freedom of connections} in $U$ with some 1-form $\varphi$, where $\Psi$ is as in \eqref{capital Psi}.
\end{Po}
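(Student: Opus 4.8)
The plan is to realize both $g_1$ and the pushed-forward metric $\widetilde g:=(\Psi^{-1})^{\ast}g_2$ as solutions of the pre-geodesic system \eqref{geodesic equation 3} over the common family of unparametrized curves $\mathcal{C}(p)$ built above, and then to invoke Lemma \ref{Solutions of geodesic eq are gauge equivalent}. Fix $p\in N_1$ and coordinates $(U,X)$ near $p$, and write $\Gamma$ and $\widetilde\Gamma$ for the Christoffel symbols of $g_1$ and of $\widetilde g$ in these coordinates; the latter are genuine smooth functions on $U$ since, by Theorem \ref{Diffeo of m1 and m2}, $\Psi$ is a diffeomorphism, hence $\widetilde g$ is a smooth Riemannian metric on $N_1$. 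I would parametrize every curve $\gamma$ occurring in $\mathcal{C}(p)$ by $g_1$-arclength, which leaves its image $\altomega_{N_1}(z,\xi)$ unchanged.

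First, note that the pair $(0,\Gamma)$, with $0$ the zero function on $\Omega$, solves \eqref{geodesic equation 3} for every $(\gamma,s_p)\in\mathcal{C}(p)$: a $g_1$-arclength parametrized $g_1$-geodesic satisfies \eqref{geodesic equation 1}, which is exactly \eqref{geodesic equation 3} with $f\equiv 0$, and the zero function is trivially homogeneous of degree $1$. The substantive step will be to produce a single homogeneous-degree-$1$ function $\widetilde f:\Omega\to\R$ for which $(\widetilde\Gamma,\widetilde f)$ solves \eqref{geodesic equation 3} for every $(\gamma,s_p)\in\mathcal{C}(p)$. The key observation is that $\Psi:(N_2,g_2)\to(N_1,\widetilde g)$ is an isometry (one checks $\Psi^{\ast}\widetilde g=(\Psi^{-1}\circ\Psi)^{\ast}g_2=g_2$), so it sends $g_2$-geodesics to $\widetilde g$-geodesics; combining this with \eqref{eq:geo_segment_cut}, \eqref{Psi maps geodesics to geodesics} and Lemma \ref{Images of geodesics}, every set $\altomega_{N_1}(z,\xi)$ with $z\in F_1^{int}$ and $\xi\in S_zN_1$ is simultaneously the image of a $g_1$-geodesic and the image of a $\widetilde g$-geodesic. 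Hence each $\gamma$ in $\mathcal{C}(p)$ is a pre-geodesic of $\widetilde g$ with nonvanishing velocity, so the converse direction of Lemma \ref{Equivalence of geodesic formulas} (the reparametrization computation carried out after its proof, applied to the connection $\widetilde\Gamma$) gives that, in its $g_1$-arclength parametrization, $\gamma$ satisfies \eqref{geodesic equation 3} with $\widetilde\Gamma$ and with a defect term of the form $\mp\|\dot\gamma\|_{g_1}\,\ddot s/\dot s^{2}$, which is homogeneous of degree $1$ in the velocity vector.

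To assemble these defects into one function $\widetilde f$ on $\Omega=\bigcup_{q\in U}\Omega_q$, I would argue from uniqueness of geodesics: if two curves of $\mathcal{C}$ pass through a point $q\in U$ with the same $g_1$-unit velocity, they coincide near $q$ as $g_1$-geodesics, so the value of the defect at that $(q,v)$ does not depend on the chosen curve; this defines $\widetilde f$ on the $g_1$-unit vectors of $\Omega$, and one extends it by homogeneity of degree $1$. With both $(0,\Gamma)$ and $(\widetilde f,\widetilde\Gamma)$ solving \eqref{geodesic equation 3} at every point of $U$ over the coefficients coming from $\mathcal{C}(p)$, and recalling that each $\Omega_q$ contains the open double cone $\Sigma_q$ produced earlier, Lemma \ref{Solutions of geodesic eq are gauge equivalent} applies verbatim and yields a $C^\infty$-smooth $1$-form $\varphi$ on $U$ with $\widetilde\Gamma^k_{i,j}=\Gamma^k_{i,j}+\delta^k_i\varphi_j+\delta^k_j\varphi_i$, i.e.\ \eqref{Gauge freedom of connections}, as claimed.

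The step I expect to be the main obstacle is precisely the global well-definedness of $\widetilde f$ on $\Omega$: one must make sure that the reparametrization defects assigned by the various curves of $\mathcal{C}$ agree on shared velocity vectors and that the homogeneous extension is unambiguous, so that Lemma \ref{Solutions of geodesic eq are gauge equivalent} can be quoted with a bona fide $\widetilde f:\Omega\to\R$. Everything else is a direct appeal to Theorem \ref{Diffeo of m1 and m2}, Lemma \ref{Images of geodesics}, the pre-geodesic discussion following Lemma \ref{Equivalence of geodesic formulas}, and Lemma \ref{Solutions of geodesic eq are gauge equivalent}.
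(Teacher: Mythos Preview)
Your proposal is correct and follows essentially the same route as the paper: show that $(0,\Gamma)$ and $(\widetilde f,\widetilde\Gamma)$ both solve \eqref{geodesic equation 3} for every $(\gamma,s_p)\in\mathcal{C}(p)$, with $\widetilde f$ built from the reparametrization defect $-\ddot s/\dot s^2$, and then invoke Lemma \ref{Solutions of geodesic eq are gauge equivalent}. The paper handles the well-definedness of $\widetilde f$ you flag by the one-line remark that the geodesic equation is invariant under affine reparametrizations, which is equivalent to your uniqueness-of-geodesics argument; your explicit observation that $\Psi:(N_2,g_2)\to(N_1,\widetilde g)$ is an isometry is a clean way to phrase why each curve in $\mathcal{C}(p)$ is a $\widetilde g$-pre-geodesic, which the paper leaves implicit.
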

\begin{proof}
Let $(U,X)$ be a local coordinate system in $N_1$. Our aim is to use the Lemma \ref{Solutions of geodesic eq are gauge equivalent} to prove the claim of this Lemma. To do so we need to  construct a function $\widetilde f:\Omega\to \R$ that satisfies $\eqref{homogeneous of degree 1}$ and moreover for any $q \in U$ the pair  $(\widetilde{\Gamma},\widetilde f)$ solves 
\mtext{the system \eqref{geodesic equation 3} 
for all such coefficients $\frac{d\gamma}{ds}(s)|_{s=s_q}\in \Omega_q$ and $\frac{d^2\gamma}{ds^2}(s)|_{s=s_q}$ that $(\gamma,s_q) \in \mathcal{C}(q)$.} 

\mtext{Let $p \in U$} and  $(c_1,t_1)\in \mathcal{C}(p)$. With out loss of generality we may assume that $t_1=0$ and $\dot{c}_1(0)=\xi \in S_pN \cap \Omega_p$.
\mtext{By definition of $\mathcal{C}(p)$, it holds that there is a unique reparametrization $t\mapsto s_\xi(t)=:s(t)$ of $c_1$ such that for curves $c_1$ and $c_2=c_1\circ s$ we have $s(0)=p$, $\dot{c}_2(0) =\xi$ and 
\begin{equation}
\label{eq:geodesic_eq_for_g1_g2}
\left\{\begin{array}{c}
\ddot{c}^k_1(t)+\dot{c}_1^{i}(t)\dot{c}_1^j(t)\Gamma^k_{i,j}(c_1(t))=0,
\\
\ddot{c}_2^k(t)+\dot{c}_2^{i}(t)\dot{c}_2^j(t)\widetilde{\Gamma}^k_{i,j}(c_2(t))=0.
\end{array}\right.
\end{equation}
Using the chain rule we can write the latter equation as
$$
\ddot{c}^k_1(s(t))+\dot{c}_1^{i}(s(t))\dot{c}_1^j(s(t))\widetilde{\Gamma}^k_{i,j}(c_1(s(t)))=-\frac{\ddot{s}(t)}{\dot{s}(t)^2}
\dot{c}_1^k(s(t)).
$$

%

We define $f:\Omega \to \R$ 
$$
f(q,v)=\frac{\ddot s_v (t)}{\dot{s}_v(t)^2}, \hbox{ if } v =\dot{\gamma}(0) \hbox{ for some } (\gamma,0) \in \mathcal{C}(q).
$$
Above  $s_v$ is such a reparametrization of $\gamma$ that, $s_v(0)=0$, 
\\
$\frac{d}{dt}\gamma(s(t))|_{t=0}=v$ and  \eqref{eq:geodesic_eq_for_g1_g2} is valid, when $c_1$ is replaced with $\gamma$ and $c_2$ is replaced with $\gamma \circ s_v$. Note that function $f$  is well defined and satisfies the equation \eqref{homogeneous of degree 1}, since geodesic equation \eqref{geodesic equation 1} is preserved under affine re-parametrizations. 
Therefore it holds that for any $q \in U$ the pairs $(\Gamma,0)$ and $(\widetilde{\Gamma},f)$ both solve 
\mtext{the system \eqref{geodesic equation 3} 
for all such coefficients $\frac{d\gamma}{ds}(s)|_{s=s_q}\in \Omega_q$ and $\frac{d^2\gamma}{ds^2}(s)|_{s=s_q}$ that $(\gamma,s_q) \in \mathcal{C}(q)$.} 
The claim follows then from Lemma \ref{Solutions of geodesic eq are gauge equivalent}.
}\end{proof}
\begin{Le}
Suppose that the connections $\Gamma$ and $\widetilde{\Gamma}$ corresponding to metric
tensors $g$ and $\widetilde g$, respectively,
satisfy the equation \eqref{Gauge freedom of connections} with a 1-form $\varphi$.  Then the metric tensors $g$ and $\widetilde{g}$ are geodesically equivalent.
\label{metrics are geodesic equiv}
\end{Le}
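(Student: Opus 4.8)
The plan is to check the definition of geodesic equivalence directly; the whole mechanism is the algebraic identity \eqref{Gauge matrix} combined with the Levi--Civita reparametrization result, Lemma \ref{Equivalence of geodesic formulas}. First I would fix a geodesic $\gamma$ of $g$ and work in an arbitrary smooth chart $(U,X)$ meeting its image. Since the difference of two connections is a tensor and the right-hand side of \eqref{Gauge freedom of connections} is manifestly the component array of a $(1,2)$-tensor, that relation holds in every such chart. In this chart $\gamma$ satisfies \eqref{geodesic equation 1} with the symbols $\Gamma$, so substituting \eqref{Gauge freedom of connections} and using \eqref{Gauge matrix}, i.e.\ $(\delta^k_i\varphi_j+\delta^k_j\varphi_i)\dot\gamma^{i}\dot\gamma^{j}=2\varphi(\dot\gamma)\,\dot\gamma^{k}$, one obtains
\[
\frac{d^2\gamma^k}{ds^2}(s)+\widetilde\Gamma^k_{i,j}(\gamma(s))\frac{d\gamma^i}{ds}(s)\frac{d\gamma^j}{ds}(s)=2\varphi\!\left(\frac{d\gamma}{ds}(s)\right)\frac{d\gamma^k}{ds}(s).
\]
Thus $\gamma$ solves an equation of the form \eqref{geodesic equation 4} for the connection $\widetilde\Gamma$, with $\kappa(s)=2\varphi(\dot\gamma(s))$, which is continuous (indeed $C^\infty$) because $\varphi$ and $\gamma$ are smooth.

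Next I would apply Lemma \ref{Equivalence of geodesic formulas} to the connection $\widetilde\Gamma$: it produces a change of parameter $t=t(s)$ with $\frac{dt}{ds}(s)=\exp\big(\int_0^s\kappa(\tau)\,d\tau\big)>0$ such that $t\mapsto\gamma(s(t))$ solves the geodesic equation \eqref{geodesic equation 1} for $\widetilde\Gamma$, that is, it is a geodesic of $\widetilde g$. Since $\frac{dt}{ds}>0$, the map $s\mapsto t(s)$ is an honest reparametrization; doing this on the charts of an atlas covering $\gamma$ and using uniqueness for the first order ODE \eqref{proper change of variables for geo eq} to patch the local solutions, we obtain a single reparametrization $\alpha$ of $\gamma$ (on a suitable interval) with $\gamma\circ\alpha$ a geodesic of $\widetilde g$.

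For the reverse direction I would simply note that \eqref{Gauge freedom of connections} is symmetric under interchanging the two metrics: rewriting it as $\Gamma^k_{i,j}=\widetilde\Gamma^k_{i,j}+\delta^k_i(-\varphi_j)+\delta^k_j(-\varphi_i)$ shows that $g$ and $\widetilde g$ play the same roles with $\varphi$ replaced by $-\varphi$. Hence the previous paragraph, applied verbatim with this interchange, yields for every geodesic $\widetilde\gamma$ of $\widetilde g$ a reparametrization $\widetilde\alpha$ with $\widetilde\gamma\circ\widetilde\alpha$ a geodesic of $g$. The two statements together are precisely the definition of geodesic equivalence of $g$ and $\widetilde g$, proving the lemma.

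I do not anticipate a genuine obstacle here: the argument is the one-line identity \eqref{Gauge matrix} followed by the already established Lemma \ref{Equivalence of geodesic formulas}. The only points deserving a word of care are the smoothness of $\kappa(s)=2\varphi(\dot\gamma(s))$ (immediate) and the globalization of the local reparametrizations supplied by Lemma \ref{Equivalence of geodesic formulas} over the possibly non-compact domain of the geodesic, which follows from uniqueness for the ODE \eqref{proper change of variables for geo eq}.
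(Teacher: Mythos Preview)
Your proposal is correct and follows essentially the same route as the paper: substitute \eqref{Gauge freedom of connections} into the geodesic equation for $g$, use \eqref{Gauge matrix} to rewrite the extra term as $2\varphi(\dot\gamma)\dot\gamma^k$, set $\kappa=2\varphi(\dot\gamma)$, invoke Lemma~\ref{Equivalence of geodesic formulas} for the reparametrization, and then swap the roles of $g$ and $\widetilde g$. The only additions you make over the paper's version are the remark that \eqref{Gauge freedom of connections} is tensorial (hence valid in every chart) and the globalization of the reparametrization via ODE uniqueness, both of which are minor elaborations rather than a different strategy.
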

\begin{proof}
Let $\gamma(t)$ be a geodesic with respect to the metric  $g$.
Then $\gamma$ satisfies the geodesic equation \eqref{geodesic equation 1}. Substitute $\Gamma$ with $\widetilde{\Gamma}$ into \eqref{geodesic equation 1} to get the equation
$$
\frac{d^2\gamma^k}{dt^2}(t) +\widetilde{\Gamma}^k_{i,j}(\gamma(t))\frac{d\gamma^i}{dt}(t)\frac{d\gamma^j}{dt}(t)=2\frac{d\gamma^k}{dt}(t)\varphi\bigg(\frac{d\gamma}{dt}(t)\bigg).
$$
Write $\kappa(t)=2\varphi (\dot{\gamma}(t))$ and use Lemma \ref{Equivalence of geodesic formulas} to show that 
 there exists a change of parameters $s\mapsto t(s)$ such that $s\mapsto \gamma(t(s))$ is a geodesic with respect to the metric $\widetilde{\Gamma}$. As the roles of $g$ and $\widetilde g$ can be exchanged, the claim follows.
 \end{proof}
By the Lemma \ref{metrics are geodesic equiv},  the equivalence of the distance difference data \eqref{Data1a}-\eqref{Data1b} implies the geodesic equivalence of metric tensors $g$ and $\Psi_{\ast}g_2$ on $N_1$. In the following theorem, that shows that metric tensors $g$ and $\Psi_{\ast}g_2$ coincide also \mtext{in $N_1$,} we will use the implications of the Matveev-Topalov theorem \cite{MaTo}. \mmtext{Their result is also concerned in the appendix of the extended preprint version
of this paper \cite{LaSa}} and its generalizations have
been considered in \cite{BT,Top}.

\begin{Le}
Suppose that manifold $N$ satisfies assumptions of Section \ref{The problem setting of this paper} and $g$ and $\widetilde{g}$ are two metric tensors on $N$. Suppose that these metrics are geodesically equivalent on manifold $N$ and coincide in set $F^{int}\neq \emptyset$. Then $g=\widetilde{g}$ in whole $N$.
\label{geodesic eq -> metrics are the same}
\end{Le}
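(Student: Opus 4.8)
The plan is to use the integrability theory of geodesically equivalent metrics developed by Matveev and Topalov \cite{MaTo} (see also \cite{Ma}). It supplies a \emph{first integral of the geodesic flow of $g$}: if $g$ and $\widetilde g$ are geodesically equivalent on $N$, there is a smooth function $I:TN\to\R$, homogeneous of degree $2$ in the velocity, such that $t\mapsto I(\gamma(t),\dot\gamma(t))$ is constant along every $g$-geodesic $\gamma$; concretely one may take
$$
I(x,\xi)=\Bigl(\frac{\det\widetilde g(x)}{\det g(x)}\Bigr)^{\!\frac{2}{n+1}}\widetilde g_x(\xi,\xi),
$$
where $\det\widetilde g/\det g$ is evaluated in a common coordinate chart (and is chart independent). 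The crucial point is that on the open set $F^{int}$, where $g=\widetilde g$ by hypothesis, also $\det\widetilde g=\det g$, so $I(x,\xi)=g_x(\xi,\xi)$ there; in particular $I(x,\xi)=1$ whenever $x\in F^{int}$ and $\|\xi\|_g=1$.

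Next I would fix an arbitrary $p\in N$ and produce a nonempty open double cone $\Sigma_p\subset T_pN$ every member of which is the velocity of a $g$-geodesic that later enters $F^{int}$. If $p\in F^{int}$ there is nothing to prove, so assume $p\notin F^{int}$. Since $N$ is compact and connected there is a minimizing $g$-geodesic from $p$ to some $q\in F^{int}$, and perturbing $q$ inside the open set $F^{int}$ we may assume $q$ is not a cut point of $p$ along that geodesic. Then, exactly as in the construction of the cone $\Sigma_p$ carried out earlier in this section, the openness of the no-cut-point condition yields a nonempty open double cone $\Sigma_p$ of such directions at $p$.

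For each $\xi\in\Sigma_p$ with $\|\xi\|_g=1$ the geodesic $\gamma_{p,\xi}$ meets $F^{int}$, so evaluating $I$ at $p$ and at a point of this geodesic lying in $F^{int}$, and using that $I$ is constant along the geodesic, gives
$$
\Bigl(\frac{\det\widetilde g(p)}{\det g(p)}\Bigr)^{\!\frac{2}{n+1}}\widetilde g_p(\xi,\xi)=I(p,\xi)=1=g_p(\xi,\xi).
$$
Both sides are quadratic forms in $\xi$ agreeing on the open set $\Sigma_p$, hence they agree on all of $T_pN$; thus $\widetilde g_p=c(p)\,g_p$ with $c(p)=\bigl(\det g(p)/\det\widetilde g(p)\bigr)^{2/(n+1)}>0$. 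Taking determinants gives $\det\widetilde g(p)=c(p)^n\det g(p)$, hence $c(p)=c(p)^{-2n/(n+1)}$, i.e.\ $c(p)^{(3n+1)/(n+1)}=1$, which forces $c(p)=1$ since $c(p)>0$. Therefore $\widetilde g_p=g_p$, and as $p$ was arbitrary, $g=\widetilde g$ on $N$.

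I expect the main obstacle to be not any single computation but the correct invocation of the Matveev--Topalov first integral together with the fact that it is under control only along geodesics that actually exit into $F^{int}$, i.e.\ only on the open cone $\Sigma_p$ rather than on all of $S_pN$. This turns out to be enough precisely because two quadratic forms coinciding on a nonempty open set coincide identically; and the existence of such a cone at \emph{every} point of $N$ is exactly where compactness and connectedness of $N$, and the assumption $F^{int}\neq\emptyset$, are used. The counterexample in Appendix A shows that the hypothesis $F^{int}\neq\emptyset$ cannot be dropped.
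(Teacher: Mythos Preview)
Your approach is essentially the paper's: invoke the Matveev--Topalov first integral, propagate the value $1$ from $F^{int}$ along $g$-geodesics, use the open cone $\Sigma_p$ to conclude that two quadratic forms agree on an open set of $T_pN$, hence everywhere, and finish with a determinant count. In fact your version is slightly more economical than the paper's, which first invokes the eigenvalue corollary \cite[Cor.~2]{MaTo} to obtain global conformality before running the cone argument; you get conformality at each $p$ directly from the cone argument, and that is enough.

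One correction: the Matveev--Topalov integral for the geodesic flow of $g$ is
\[
I_0(x,\xi)=\Bigl(\frac{\det g(x)}{\det \widetilde g(x)}\Bigr)^{2/(n+1)}\widetilde g_x(\xi,\xi),
\]
i.e.\ with the ratio $\det g/\det\widetilde g$, not $\det\widetilde g/\det g$ as you wrote (see \cite{MaTo} and \cite{Ma}). Your inverted quantity is \emph{not} constant along $g$-geodesics in general, so as written the key step fails. With the correct $I_0$ your argument goes through unchanged: on the cone you get $\widetilde g_p=c(p)\,g_p$ with $c(p)=(\det\widetilde g(p)/\det g(p))^{2/(n+1)}$, and taking determinants gives $c(p)^{(n-1)/2}=1$ (rather than your exponent $(3n+1)/(n+1)$), which for $n\ge 2$ still forces $c(p)=1$.
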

\begin{proof}
\mmtext{Define a smooth mapping $I_0:TN \rightarrow \R$ as
\begin{equation}
I_0((x,v))=\bigg(\frac{\det (g_x) }{\det(\widetilde{g}_x)}\bigg)^{\frac{2}{n+1}} \widetilde{g}_x(v,v), \label{Matveev formula}
\end{equation}
where  $\widetilde {g}_x(v,v)=\widetilde{g}_{jk}(x)v^jv^k$. 
Note that the function $x\mapsto \frac{\text{det} (g_x) }{\text{det}(\widetilde{g}_x)}$ is coordinate invariant. 

Let $\gamma_{g}$ be a geodesic of metric $g$. Define a smooth path $\beta$ in $TN$ as $\beta(t)=(\gamma_{g}(t), \dot{\gamma}_{g}(t))$, i.e., $\beta$ is an integral curve of the geodesic flow of metric $g$.
The 
 Matveev-Topalov theorem \cite{MaTo} states that if $g$ and $\widetilde{g}$ are geodesically equivalent, then  there are several invariants related to the $(1,1)$-tensor $G=g^{-1}\tilde g$, given in local coordinates by $G^j_k(x)=g^{ji}(x)\tilde g_{ik}(x)$, that are
 constants along integral curves $\beta(t)$. In particular, the
function $t\mapsto I_0(\beta(t))$ is a constant.

A corollary of  Matveev-Topalov theorem, 
\cite[Cor.\ 2]{MaTo} (see also \cite[Cor. 2]{MaTo2} and \cite[Thm.\ 3]{BT}), is that the number $n(x)$ of the 
 different eigenvalues of the map $G(x):T_xN\to T_xN$ is constant 
 at almost every point $x\in N$. Since $G(x)=I$ for $x\in F^{int}$, we have that
 $n(x)=0$ in the set $F^{int}$ having a positive measure. This implies that $n(x)=0$ for almost all $x\in N$.
 Hence  for almost all $x\in N$ there is $c(x)\in \R_+$ such that
 we have $G(x)=c(x)I$, so that
$\tilde g_{ik}(x)=c(x) g_{ik}(x).$
 As $G$  is continuous,
 this holds for all $x\in N$. Summarizing,
 the first implication of the  Matveev-Topalov theorem is that
 $g$  and $\tilde g$ are conformal on the whole manifold $N$.

Let $x_0$ be a point of $N$. Since we assumed that metrics $g$ and $\widetilde{g}$ coincide in set $F$, we have for any  point $z \in F$ and vector $v \in T_zN$ that formula \eqref{Matveev formula} has form
\begin{equation}
I_0(z,v)=\widetilde{g}_z(v,v)=g_z(v,v). \label{I_0=g tilde}
\end{equation}
Let $\gamma(t):=\gamma^g_{z, \xi}(t),\: \xi \in S_zN,$ $z\in F$  be a 
$g$-geodesic passing through $x_0$ such that $x_0=\gamma(t_0)$ for some $t_0\geq 0$. The $I_0((z,\xi))=1$ 
and  by the  Matveev-Topalov theorem,
$I_0$ is constant along the integral curves of geodesic flow of $g$. Thus,
we have
\begin{equation}
I_0(x_0,\dot{\gamma}(t_0))=I_0(z,\xi)=1. \label{I_0=1 on W}
\end{equation}
Define $W_{x_0}$ to be the set of all $g$-unit vectors of $T_{x_0}N$ with respect to metric $g$, such that every vector in $W_{x_0}$ is a velocity vector of some $g$-geodesic starting from $F$ and passing trough $x_0$. Recall that set $W_{x_0}^{int}\subset S_{x_0}N$ is not empty. 

Let $X=(x^1, \ldots ,x^n)$ be any coordinate chart at $x_0$. Formula \eqref{I_0=1 on W} shows that for every $\xi \in W_{x_0}$ we have 
\begin{equation}
g_{ij}(x_0)\xi^i\xi^j=1=I_0(x_0,\xi)=\bigg(\frac{\det (g_{x_0}) }{\det(\widetilde{g}_{x_0})}\bigg)^{\frac{2}{n+1}} \widetilde{g}_{ij}(x_0)\xi^i\xi^j. \label{metrics are conformal}
\end{equation}

Consider an open cone 
$$
W_{x_0}^{int}\cdot \R_+:= \{tw \in T_{x_0}N: t>0, w\in W_{x_0}^{int}\}.
$$
Then the equation \eqref{metrics are conformal} holds for all $\xi \in W_{x_0}^{int}\cdot \R_+$ and since the set $W_{x_0}^{int}\cdot \R_+$ is open and both sides of equation \eqref{metrics are conformal} are smooth in $\xi$, we obtain the equation
\begin{equation}
g_{ij}(x_0)=\bigg(\frac{\det (g_{x_0}) }{\det(\widetilde{g}_{x_0})}\bigg)^{\frac{2}{n+1}} \widetilde{g}_{ij}(x_0), \textrm{ for all } i,j\in \{1,\ldots,n\},
\end{equation} 
as a second order derivative with respect to $\xi$ of equation \eqref{metrics are conformal}.

Denote $f(p):=\frac{\text{det} (g(p)) }{\text{det}(\widetilde{g}(p))}$. Then the above yields 
\begin{equation}
(f(x_0))^{\frac{2}{n+1}}\;\widetilde{g}_{jk}(x_0)=g_{jk}(x_0), \textrm{ for all } j,k \in \{1, \ldots , n\}. \label{local metrics are conformal eq}
\end{equation}
Taking determinants of both sides of 
\eqref{local metrics are conformal eq} 
we see that
\begin{equation}
(f(x_0))^{\frac{2n}{n+1}-1}=1.
 \label{f(x_0)=1}
\end{equation}
Since} we we have assumed the dimension of manifold $N$ is at least 2, we see from equation \eqref{f(x_0)=1} that $f(x_0)=1$. By formula \eqref{local metrics are conformal eq} this implies $g= \widetilde{g}$ on  $M$. 
\end{proof}
Theorem \ref{main theorem} follows now from Theorems \ref{Homeo of m1 and m2} and \ref{Diffeo of m1 and m2} and Lemmas \ref{metrics are geodesic equiv} and 
\ref{geodesic eq -> metrics are the same}.\hfill$\square$\medskip

\section {Application for an inverse problem for a wave equation} 
\label{subset: application}
\matti{Here we consider  an application of Theorem \ref{main theorem} for an inverse problem for a wave equation 
with spontaneous point sources.}
\subsubsection{Support sets of waves produced by point sources}
Let $(N,g)$ be a closed Riemannian manifold. Denote the Laplace-Beltrami operator of metric $g$ by $\Delta_g$. 
We consider a wave equation
\begin{equation}
\left\{ \begin{array}{l}
(\partial_t^2-\Delta_g)G(\cdot,\cdot,y,s)=\kappa(y,s)\delta_{y,s}(\cdot,\cdot), \quad \hbox{in }\cN
\\
G(x,t,y,s)=0, \quad\hbox{for }t<s, \: x\in N.
\end{array} \right.
\label{wave equ}
\end{equation} 
where $\cN=N\times \R$  is the space-time.
The solution $G(x,t,y,s)$ is the wave produced by a point source located at the point $y\in M$ and time $s\in \R$ having the magnitude   $\kappa(y,s)\in \R\setminus \{0\}.$ Above, we have $\delta_{y,s}(x,t)=\delta_y(x)\delta_s(t)$ corresponds 
to a point source at $ (y,s)\in \cN$.

\subsubsection{Inverse coefficient problem with spontaneous point source data}
\matti{Assume that there are two manifolds $(N_1,g_1)$ and $(N_2,g_2)$ satisfying the assumptions given} in Section \ref{The problem setting of this paper} and  
\beq
\label{wave data of manifolds1}
& &\textrm{There exists an isometry } \phi:F_1 \rightarrow F_2
\\ \label{wave data of manifolds2}
& &W_1=W_2
\eeq
where $W_1$  and $W_2$  are collections of supports of waves produced by point sources
taking place at unknown points at unknown time, that is,
$$
W_1=\{\supp(G^1(\cdot,\cdot,y_1,s_1)) \cap (F_1\times \R)\,; \: y_1\in M_1,\ s_1\in \R \}\subset 2^{F_1\times \R}
$$
and
$$
W_2=\{\supp(G^2(\phi(\cdot),\cdot, y_2,s_2)) \cap (F_1\times \R)\,; \: y_2\in M_2,\ s_2\in \R\} \subset 2^{F_1\times \R}
$$
where functions $G^j,$ $j=\{1,2\}$ solve equation \eqref{wave equ} on manifold $N_j$. 
Here $2^{F_j \times \R}=\{V;\ V\subset F_j\times \R\}$ is the power set of $F_j\times \R$.
{Roughly speaking, $W_j$ corresponds to the data that one makes by
observing, in the set $F_j$, the waves that are produced by spontaneous point sources that
that go off, at an unknown time and at an unknown location, in the set $M_j$.

Earlier, the inverse problem for the  sources that are delta-distributions
in time and localized also in the space has been studied in 
\cite{dHT} in the case when the metric $g$ is known. 
Theorem \ref{main theorem} yields the following result telling that the metric $g$  can
be determined when  a large number of waves produced by the point sources are observed}:

\begin{proposition} Let $(N_j,g_j)$, $j=1,2$ be a closed compact Riemannian $n$-manifolds, $n\geq 2$ and $M_j\subset N_j$
be an open set such that $F_j=N_j\setminus M_j$  have non-empty interior.
If the spontaneous point source data of these manifolds coincide,
that is, we have \eqref{wave data of manifolds1}-\eqref{wave data of manifolds2},
then $(N_1,g_1)$ and $(N_2,g_2)$ are isometric.
\end{proposition}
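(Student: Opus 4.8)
The plan is to reduce the claim to Theorem~\ref{main theorem}: I will show that the data $W_j$ of supports of waves produced by spontaneous point sources determines the distance difference data $\D_j(M_j)$, so that \eqref{wave data of manifolds1}--\eqref{wave data of manifolds2} imply the equivalence \eqref{Data1a}--\eqref{Data1b} of the distance difference data.

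First I would record what the support of $G^j$ encodes geometrically. By finite speed of propagation for \eqref{wave equ} (the metric is smooth) we have, for every $y\in N_j$ and $s\in\R$,
\[
\supp\big(G^j(\cdot,\cdot,y,s)\big)\subset\{(x,t)\in N_j\times\R:\ t\ge s+d_j(x,y)\}.
\]
Conversely, since $\kappa(y,s)\neq 0$, the fundamental solution of \eqref{wave equ} is genuinely singular on the characteristic cone $\{t=s+d_j(x,y)\}$; for $x$ outside the cut locus of $y$ this is the standard description of the wave kernel (its wave front set contains the conormals of the cone), and for the remaining $x$ it follows from continuity of $d_j(\cdot,y)$ together with closedness of the support. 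Hence the first-arrival function
\[
\tau^{j}_{y,s}(z):=\inf\{t\in\R:\ (z,t)\in\supp\big(G^j(\cdot,\cdot,y,s)\big)\}
\]
equals $s+d_j(z,y)$ for all $z\in N_j$; for $z\in F_j$ this is exactly the observation time $T_{y,s}(z)$ of \eqref{def: dist diff}. I expect this identification --- in particular the lower bound for the support, i.e.\ that the wave actually reaches $z$ by the time $s+d_j(z,y)$ --- to be the only real obstacle; it is, however, classical and can be quoted from the theory of the wave kernel on Riemannian manifolds (see, e.g., \cite{IBSP}) or obtained from a unique continuation argument.

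Next I would associate to each element of $W_j$ a distance difference function. Let $V\in W_1$, say $V=\supp(G^1(\cdot,\cdot,y,s))\cap(F_1\times\R)$ with $y\in M_1$, $s\in\R$. Then $V$ determines the function $z\mapsto\tau_V(z):=\inf\{t:(z,t)\in V\}$ on $F_1$, and by the previous step $\tau_V(z)=s+d_1(z,y)$; consequently the unknown emission time $s$ cancels and
\[
(z_1,z_2)\mapsto \tau_V(z_1)-\tau_V(z_2)=d_1(z_1,y)-d_1(z_2,y)=D^1_y(z_1,z_2).
\]
Letting $y$ run over $M_1$ and $s$ over $\R$, this rule maps $W_1$ onto $\D_1(M_1)=\{D^1_y:y\in M_1\}$. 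The same computation applied to $V\in W_2$, where now $V=\{(z,t)\in F_1\times\R:(\phi(z),t)\in\supp(G^2(\cdot,\cdot,y,s))\}$ and $\phi:F_1\to F_2$ is the isometry of \eqref{wave data of manifolds1}, gives $\tau_V(z)=s+d_2(\phi(z),y)$ and hence $\tau_V(z_1)-\tau_V(z_2)=D^2_y(\phi(z_1),\phi(z_2))$; so the same rule maps $W_2$ onto $\{D^2_y(\phi(\cdot),\phi(\cdot)):y\in M_2\}$.

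Finally, the rule above depends on an element of $W_j$ only through the corresponding subset of $F_1\times\R$. Hence the hypothesis $W_1=W_2$ forces
\[
\{D^1_y(\cdot,\cdot):y\in M_1\}=\{D^2_y(\phi(\cdot),\phi(\cdot)):y\in M_2\},
\]
which is exactly \eqref{Data1b}, while \eqref{wave data of manifolds1} is exactly \eqref{Data1a}. Thus the distance difference data of $M_1$ and $M_2$ are equivalent, and Theorem~\ref{main theorem} yields that $(N_1,g_1)$ and $(N_2,g_2)$ are isometric.
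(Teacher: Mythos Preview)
Your proposal is correct and follows essentially the same route as the paper: both extract from each support set the first-arrival function $z\mapsto s+d_j(z,y)$, subtract to cancel the unknown emission time $s$, recover the distance difference functions $D^j_y$, and then invoke Theorem~\ref{main theorem}. The paper is more explicit about the one nontrivial step---that the wave genuinely arrives at $z$ at time $s+d_j(z,y)$---by citing the propagation-of-singularities results of Duistermaat--H\"ormander \cite{Hormander2} and Greenleaf--Uhlmann \cite{GU1} to identify $\mathrm{singsupp}\,G$ with $\partial J^+(y,s)$, which is exactly the wave-front-set argument you sketch; note, however, that your alternative suggestion of a unique-continuation argument goes the wrong way (it yields finite propagation speed, i.e.\ the upper bound on the support, not the assertion that the wave actually reaches the cone).
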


\begin{proof} Let us again omit the sub-indexes of $N,M$, and $F$.
For $y\in M$,  $s\in \R$, and $z\in F$ we define a number
\ba
\mathcal T_{y,s}(z)&=&\sup \{t\in \R;\hbox{ the point $(z,t)$ has a neighborhood}\\
& &\quad \quad \hbox{ $U\subset \cN$ such that $ G(\cdot,\cdot,y,s)\big|_U =0$}\}
\ea
which tells us, what is the first time when the wave $G(\cdot,\cdot,y,s)$ is observed near the
 point $z$. {Using the finite velocity of the wave propagation for the wave equation, see  \cite{Hormander4}, we see that the support of 
$G(\cdot,\cdot,y,s)$ is contained in the future light cone of the point $q=(y,s)\in\cN$ given by
\ba
J^+(q)=\{(y',s')\in \cN;\ s'\ge  d(y',y)+s\}.
\ea
Next, for $\xi=\xi^j\frac \p{\p x^j}\in T_yN$  we denote the corresponding
co-vector by $\xi^\flat=g_{jk}(y)\xi^jdx^j$.
Then the results of  \cite{Hormander2} and \cite{GU1} on the propagation
of singularities for the real principal type operators, in particular for the wave operator, imply that in the set $\mathcal N\setminus \{q\}$  Green's function $G(\cdot,\cdot,y,s)$
 is a Lagrangian distribution
associated to the Lagranian sub-manifold 
\ba
\Sigma_0=\{(\gamma_{y,\eta}(t),s+t;\dot \gamma_{y,\eta}(t)^\flat,dt)\in T^*\cN;\ \eta \in S_yN, \: t> 0\}
\ea
and its principal symbol on $\Sigma_0$ is non-zero.
In particular, \cite[Prop. 2.1]{GU1} implies that $\Sigma=\Sigma_0\cup (T^*_q\mathcal N,\setminus \{0\})$
coincides with the wave front set WF$(u)$ of the solution $u=G(\cdot,\cdot,y,s)$.
This means that a wave emanating from a point source $(y,s)$ propagates along the geodesics of manifold $(N,g)$. 
The image  of WF$(u)$  in the projection $\pi:T^*\cN\to  \cN$ 
coincides the singular support of $u$. Hence, we see that
\beq\label{supports}
& &\textrm{singsupp}\big(G(\cdot,\cdot,y,s)\big)=S(q),\quad \hbox{where}\\ \nonumber
& &S(q)=\{(\exp_y(t\eta),s+t)\in \cN; \eta \in S_yN, \: t\geq 0\}.
\hspace{-10mm}
\eeq
Since the Riemannian manifold $N$ is complete, the space-time $\cN$  is a globally hyperbolic
Lorentzian manifold and we have $\p J^+(q)=S(q)$, see \cite{ONeill}. Summarizing, the above
implies that 
the function $G(\cdot,\cdot,y,s)$ vanishes outside $J^+(q)$ and is non-smooth,
and thus it is non-vanishing in a neighborhood of arbitrary point of $\p J^+(q)$. Thus,
for $z\in F$ we have $\mathcal T_{y,s}(z)=d(z,y)-s$.}
Hence the  distance difference functions satisfy equation
\begin{equation}
D_y(z_1,z_2)=\mathcal T_{y,s}(z_1)-\mathcal T_{y,s}(z_2).
\label{Relation of wave data and DDD}
\end{equation}
Thus, when formulas \eqref{wave data of manifolds1}-\eqref{wave data of manifolds2} are valid,
we see 
using equation \eqref{Relation of wave data and DDD}
that  the distance difference data of the manifolds $N_1$ and $N_2$  coincide,
that is, we have \eqref{Data1a}-\eqref{Data1b}. Hence, the claim follows from Theorem \ref{main theorem}. 
\end{proof}

\matti{Finally, we note that sets $W_j$ are closely related to the light-observation
sets studied in  \cite{preprint} in the study of the inverse problems for non-linear
hyperbolic problems with a time-dependent metric. 
The  light-observation
set $P_U(q)$ corresponding to a source point $q=(y,s)$ and
the observation set $U$ is the intersection of $U$ and the future
light cone emanating from $q$. In fact,
the formula (\ref{supports}) implies that in
the space time $\cN=N\times \R$ the sets $W_j$ coincide with 
the light-observation
sets $P_U(q)$ corresponding to a source point $q=(y,s)$ and
the observation set $U=F\times \R$.}

\medskip

\noindent{\bf Acknowledgements.}
 The authors express their gratitude to Institut Henri Poincare, where part of this work was done. The authors thank prof. V. Matveev and P. Topalov for valuable comments and prof. M. de Hoop for pointing out the relation of geometric inverse problems and the geophysical measurements of microearthquakes.

 ML is partially supported by the Finnish Centre of
Excellence in Inverse Problems Research 2012-2017 and an Academy Professorship.
TS is partially supported by Academy of Finland, project 263235.

\section*{Appendix A: Extensions of data}
\label{Appendix A}

Assume that we are given the set $F=N\setminus M$ and the metric $g|_F$,
but instead of the function $D_{x}: F \times F \rightarrow \R$  we know only
its restriction on the boundary $\p F=\p M$, that is, the map
$$
D_{x}|_{\p F \times \p F}: \p F \times \p F \rightarrow \R, \: D_{x}|_{\p F \times \p F}(z_1,z_2):=d_N(z_1,x)-d_N(z_2,x).
$$

\begin{Le}
The manifold $F=N\setminus M$, the metric $g|_F$,
and the restriction $D_{x}|_{\p F \times \p F}$ of the distance
difference function corresponding to  $x\in M$  determine the distance
difference function $D_{x}: F \times F \rightarrow \R$.
\end{Le}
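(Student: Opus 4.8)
The plan is to reconstruct, for every $w\in F$, the value $d_N(w,x)$ up to one additive constant that is independent of $w$; since that constant cancels in $D_x(w_1,w_2)=d_N(w_1,x)-d_N(w_2,x)$, this determines $D_x$ on all of $F\times F$.

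The key step is a ``first exit'' identity: for every $w\in F$,
$$
d_N(w,x)=\min_{z\in\partial F}\bigl(d_F(w,z)+d_N(z,x)\bigr),
$$
where $d_F$ denotes the intrinsic Riemannian distance of the manifold-with-boundary $(F,g|_F)$, hence part of the given data. The inequality ``$\le$'' follows from the triangle inequality in $N$ together with $d_N\le d_F$. For ``$\ge$'', let $\gamma:[0,L]\to N$ be a unit-speed minimizing geodesic from $w$ to $x$. Since $x\in M$, $M$ is open and $w\in F$, the set $\{t:\gamma(t)\in M\}$ is nonempty, so $s^\ast:=\inf\{t:\gamma(t)\in M\}$ is well defined and $0\le s^\ast<L$. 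For $t<s^\ast$ we have $\gamma(t)\notin M$, i.e. $\gamma(t)\in F$, and by continuity $z:=\gamma(s^\ast)\in\overline F\cap\overline M=\partial F$. Thus $\gamma|_{[0,s^\ast]}$ is a path inside $F$ of length $s^\ast\ge d_F(w,z)$, while $\gamma|_{[s^\ast,L]}$ joins $z$ to $x$ with length $L-s^\ast\ge d_N(z,x)$, so $d_F(w,z)+d_N(z,x)\le L=d_N(w,x)$.

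Now fix a reference point $z_0\in\partial F$ (which is nonempty, since $M$ is a nonempty proper open subset of the connected manifold $N$). For $z\in\partial F$ one has $d_N(z,x)=d_N(z_0,x)+D_x(z,z_0)$, and $D_x(z,z_0)$ is known from $D_x|_{\partial F\times\partial F}$. Substituting into the identity above gives
$$
d_N(w,x)=d_N(z_0,x)+h(w),\qquad h(w):=\min_{z\in\partial F}\bigl(d_F(w,z)+D_x(z,z_0)\bigr),
$$
where the function $h:F\to\R$ depends only on $g|_F$ and on $D_x|_{\partial F\times\partial F}$. Hence $D_x(w_1,w_2)=h(w_1)-h(w_2)$ for all $w_1,w_2\in F$, which is determined by the data and proves the lemma.

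The only point requiring care — and the only real obstacle — is that a minimizing $N$-geodesic from $w$ to $x$ may enter and leave $M$ several times, so one cannot simply take its last intersection with $\partial F$. Using instead the first exit time $s^\ast$, together with the fact that $F$ is closed, resolves this: it guarantees both that the arc $\gamma|_{[0,s^\ast]}$ stays in $F$ and that its endpoint lies on $\partial F$. One should also note that this endpoint lies in the same connected component of $F$ as $w$, so $d_F(w,\gamma(s^\ast))<\infty$ and the minimum over $\partial F$ is actually attained, even if $F$ is disconnected.
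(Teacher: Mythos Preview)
Your proof is correct and rests on the same core identity as the paper's: for $w\in F$ and $x\in M$, one has $d_N(w,x)=\min_{z\in\partial F}\bigl(d_F(w,z)+d_N(z,x)\bigr)$, established via the first-exit point of a minimizing geodesic. The paper simply packages this as the single formula
\[
D_{x}(z_1,z_2)=\inf_{\alpha}\sup_{\beta}\Bigl(\mathcal L(\alpha)+D_{x}|_{\partial F\times\partial F}(\alpha(1),\beta(1))-\mathcal L(\beta)\Bigr),
\]
with $\alpha,\beta$ ranging over curves in $F$ from $z_1,z_2$ to $\partial F$, and offers no further justification; your reference-point reformulation $D_x(w_1,w_2)=h(w_1)-h(w_2)$ is an equivalent but more explicit rendering of the same argument, and in fact supplies the verification the paper omits.
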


\begin{proof}
We can determine the map $D_{x}: F \times F \rightarrow \R$
by the formula
$$
 D_{x}(z_1,z_2)=\inf_\a \sup_\b  \bigg(\L(\a)+ D_{x}|_{\p F \times \p F}(\a(1),\beta(1))-
 \L(\beta)\bigg),
  $$
where the infimum is taken over the smooth curves $\a:[0,1]\to F$
from $z_1$  to $\a(1)\in \p F$ and the 
supremum  is taken over the smooth curves $\b:[0,1]\to F$
from $z_2$  to $\beta(1)\in \p F$. 
\end{proof}

This raises the question, if the manifold $(N,g)$ can be reconstructed
when we are given a submanifold of codimension 1, e.g.\ the boundary of the open set $M$ considered
above, and the distance difference functions on this submanifold.
To consider this, assume that we are given a  submanifold    $\widetilde F\subset N$ of dimension $(n-1)$, the metric 
 $g|_{\tilde F}$ on $\tilde F$,
and the collection
$$\{D^x_{\tilde F,N};\ x\in N\}\subset C(\tilde F \times \tilde F),$$ 
where $D^x_{\tilde F,N}(z_1,z_2)=d_N(x,z_1)-d_N(x,z_2)$ for $z_1,z_2\in \tilde F$.
The following counterexample shows that such data do
not uniquely determine the isometry type of $(N,g)$.

\begin{figure}[h]
 \begin{picture}(350,225)
  \put(0,10){\includegraphics[width=350pt]{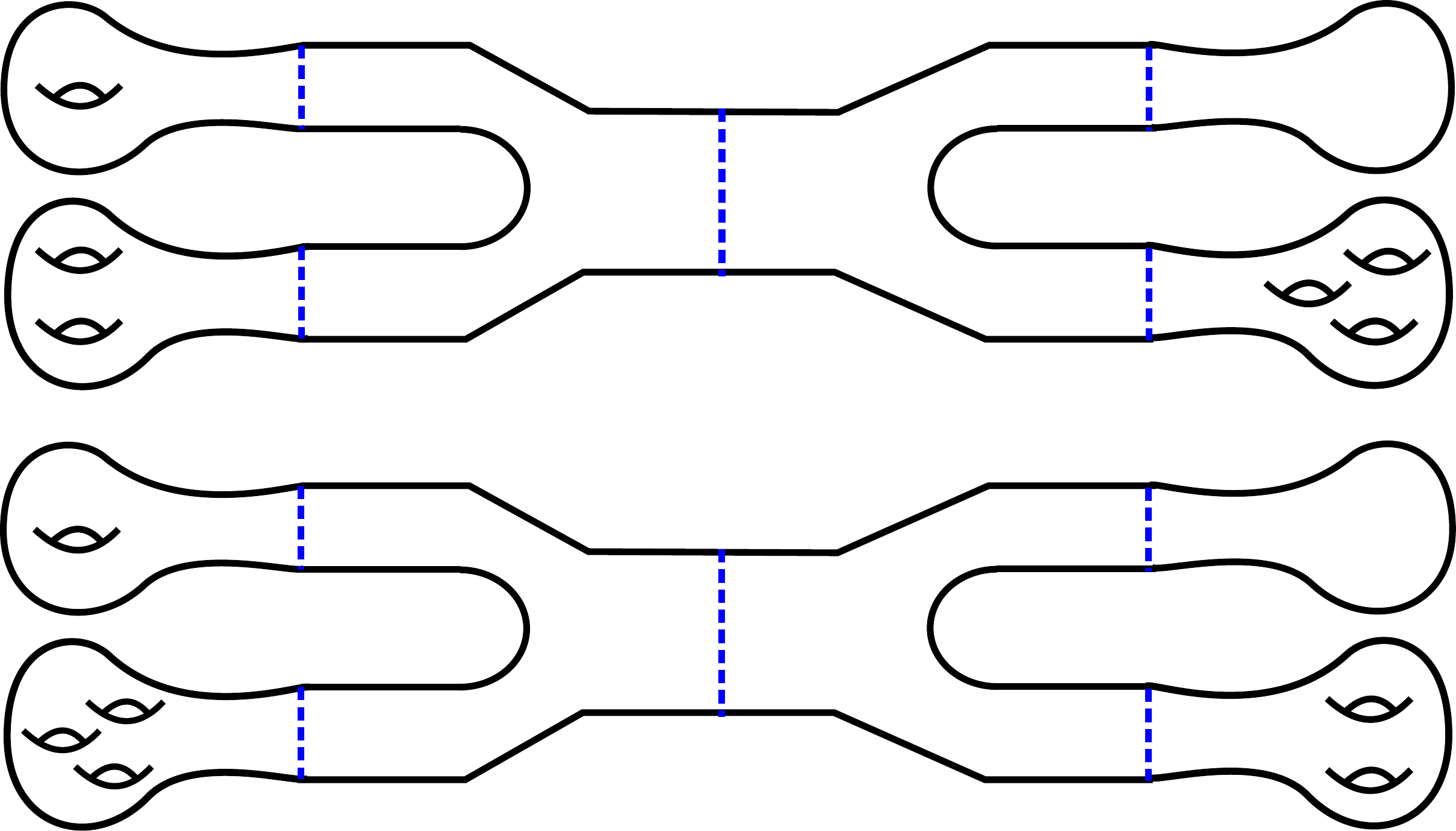}}
  \put(355,185){$\Sigma_1$}
  \put(-20,185){$\Sigma_2$}
  \put(-20,135){$\Sigma_3$}
  \put(355,135){$\Sigma_4$}
  \put(355,80){$\Sigma_1$}
  \put(-20,80){$\Sigma_2$}
  \put(-35,30){$\mathcal R(\Sigma_4)$}
  \put(355,30){$\mathcal R(\Sigma_3)$}
  \put(170,127){$\widetilde F_1$}
  \put(170,20){$\widetilde F_2$}
  \put(355,30){$\mathcal R(\Sigma_3)$}
  \put(170,190){$N_1$}
  \put(170,85){$N_2$}
\end{picture}
\caption{An illustration of manifolds $N_1$ and $N_2$ in Example $A1$.
When  $(n-1)$-dimensional submanifolds $\tilde F_1=\tilde F_2=\tilde F$  are identified,  the distance difference functions $\{D^x_{\tilde F,N_1};\ x\in N_1\}$ and 
$\{D^x_{\tilde F,N_2};\ x\in N_2\}$ coincide.}
\end{figure}

\noindent {\bf Example A1.}
Let $C_r(y)=\{(x_1,x_2)\in \R^2;\ |x_1-y_1|^2+|x_2-y_2|^2=r^2\}$   be a circle of radius $r$
centered at $y=(y_1,y_2)$. Let $p_1=(2,0)$, $p_2=(-2,0)$, $L>3$, and
\ba
& &S_0= C_1(0)\times [-1,1],\\
& &S_1= C_1(p_1)\times [2,L],\\
& &S_2= C_1(p_2)\times [2,L],
\ea 
and $K\subset \R^2\times [1,2]$  be a 2-dimensional surface which
boundary has three components, $C_1(0)\times \{1\}$,  $C_1(p_1)\times \{2\}$,
and  $C_1(p_2)\times \{2\}$, such that the union $S_0\cup K\cup S_1\cup S_2$
is a smooth surface in $\R^3$. Moreover, let $\mathcal R:(x_1,x_2,x_3)\mapsto 
(x_1,x_2,-x_3)$ denote the reflection in the $x_3$-variable. 
Observe that then $\mathcal R(S_0)=S_0$.
We 
define a smooth surface
$$
\Sigma_0=S_0\cup K\cup S_1\cup S_2 \cup \mathcal R(K)\cup 
\mathcal R(S_1)\cup \mathcal R(S_2).
$$
The boundary of $\Sigma_0$ consists of 4 circles,
namely $\Gamma_1=C_1(p_1)\times \{L\}$,
 $\Gamma_2=C_1(p_1)\times \{-L\}$,  $\Gamma_3=C_1(p_2)\times \{L\}$,
 and  $\Gamma_4=C_1(p_2)\times \{-L\}$. Let us consider
 four embedded Riemannian surfaces $\Sigma_j\subset \R^3$, $j=1,2,3,4$,   with boundaries
 $\p \Sigma_j$ are equal to $\Gamma_j$. Assume that near $\p \Sigma_j$ the surfaces
 $\Sigma_j$ are isometric to the Cartesian  product of $\Gamma_j$  and an interval $[0,\e]$   with $\e>0$,
 and that the genus of $\Sigma_j$ is equal to $(j-1)$. Also, assume that 
 $\Sigma_j\cap \Sigma_k=\emptyset$ for $j,k=1,2,3,4$ and
 $\Sigma_0\cap \Sigma_j=\Gamma_j$ for $j=1,2,3,4$.

First, let us construct a manifold $N_1$ by gluing
 surfaces $\Sigma_0$   with $\Sigma_1,\Sigma_2,\Sigma_3,$   and $\Sigma_4$
 such that the boundaries $\Gamma_j$   are glued with $\p \Sigma_j$, $j\in \{1,2,3,4\}$.
 
Second, we construct a manifold $N_2$ by gluing
 surfaces $\Sigma_0$   with $\Sigma_1,\Sigma_2,\mathcal{R}(\Sigma_3),$   and $\mathcal{R}(\Sigma_4)$
 such that the boundaries $\Gamma_j$   are glued with $\p \Sigma_j$ with $j\in \{1,2\}$
 but $\Gamma_3$ is  glued with $\mathcal{R}(\p \Sigma_4)$ and  
  $\Gamma_4$ is  glued with $\mathcal{R}(\p \Sigma_3)$, see Fig.\ 5.  For both manifolds $N_1$ and $N_2$ we give the induced Riemannian metric from $\R^3$. 
Let   $ \tilde F= \tilde F_1= \tilde F_2=S_0\cap (\R^2\times \{0\})$.

  Let us assume that $L$ above is larger than $\diam(K)+10$.
  Then on $N_\ell$, $\ell=1,2$ a minimizing geodesic from $x\in \Sigma_j$, $j\ge 1$
  to $z\in \tilde F$ does not intersect the other sets $\Sigma_k$ with $k\in \{1,2,3,4\}\setminus \{j\}$. 
  Using this we see that the sets
  $\{D^{x}_{\tilde F,N_\ell};\ x\in N_\ell\}\subset C(\tilde F\times \tilde F)$ are the same for $\ell=1,2$.
  As the manifolds $N_1$ and $N_2$   are not isometric,
this implies that the data $(\tilde F,g|_{\tilde F})$   and 
  $\{D^{x}_{\tilde F,N};\ x\in N\}$ do not determine uniquely the manifold
  $(N,g)$.


\end{document}